\documentclass[11pt]{amsart}

\usepackage{amsmath,amsthm,amssymb,graphics,color}
\usepackage{hyperref} 
\usepackage{xcolor}
\usepackage[english]{babel}
\usepackage[margin=1in]{geometry}


\usepackage{subcaption}
\usepackage{paralist}
\usepackage{graphicx}
\usepackage{esvect}
\usepackage{float}
\usepackage{array}
\usepackage{esvect}
\usepackage{float}
\usepackage{array}
\usepackage{tensor}
\usepackage{amsthm}
\usepackage{amsfonts}
\usepackage{bbm}

\newcommand{\Z}{\mathbb{Z}}
\newcommand{\N}{\mathbb{N}}

\newcommand{\R}{\mathbb{R}}

\newcommand{\T}{\mathbb{T}}
\newcommand{\cn}{\text{cn}}
\newcommand{\sn}{\text{sn}}
\newcommand{\am}{\text{am}}

\newcommand{\Id}{\text{Id}}

\newcommand{\supp}{\text{supp}}

\renewcommand{\phi}{\varphi}
\renewcommand{\epsilon}{\varepsilon}

\newtheorem{theo}{Theorem}[section]
\newtheorem{prop}[theo]{Proposition}
\newtheorem{coro}[theo]{Corollary}
\newtheorem{lemm}[theo]{Lemma}
\newtheorem{conj}[theo]{Conjecture}

\theoremstyle{definition}
\newtheorem{def1}[theo]{Definition}
\theoremstyle{remark}
\newtheorem{rema}[theo]{Remark}

\newcommand{\nwc}{\newcommand}
\nwc{\Oph}{\operatorname{Op}_\hbar}
\nwc{\la}{\langle}
\nwc{\ra}{\rangle}

\nwc{\mf}{\mathbf} 
\nwc{\blds}{\boldsymbol} 
\nwc{\ml}{\mathcal} 

\renewcommand{\Im}{\operatorname{Im}}
\renewcommand{\Re}{\operatorname{Re}}

\renewcommand{\d}{\partial}

\renewcommand{\phi}{\varphi}

\addtolength{\baselineskip}{1pt}

\newcommand{\blue}[1]{{\color{blue}{#1}}}
\newcommand{\red}[1]{{\color{red}{#1}}}

\newcommand{\black}[1]{\color{black}}

\linespread{1.25}

\title{Robin Spectral Rigidity of the Ellipse}
\date{}

\author{Amir Vig}
\address{Department of Mathematics, UC Irvine, Irvine, CA 92697, USA} \email{bvig@uci.edu}

\begin{document}
\maketitle

\begin{abstract} In this paper, we investigate $C^1$ isospectral deformations of the ellipse with Robin boundary conditions, allowing both the Robin function and domain to deform simultaneously. We prove that if the deformations preserve the reflectional symmetries of the ellipse, then the first variation of both the domain and Robin function must vanish. If the deformation is in fact smooth, reparametrizing allows us to show that the first variation actually vanishes to infinite order. In particular, there exist no such analytic isospectral deformations. The key ingredients are a version of Hadamard's variational formula for variable Robin boundary conditions and an oscillatory integral representation of the wave trace variation which uses action angle coordinates for the billiard map. For the latter, we in fact construct an explicit parametrix for the wave propagator in the interior, microlocally near geodesic loops.
\end{abstract}

\section{Introduction}
In this paper, we prove infinitesimal spectral rigidity of the ellipse with $C^1$ deformations in both the domain and Robin boundary conditions which preserve the symmetries of the ellipse. This means that the first variations of both the domain and Robin function vanish. To make this precise, we consider the eigenvalue problem for the Laplacian. Let $\Omega_0$ be an ellipse and $\phi_\epsilon$ a $C^1$ family of diffeomorphisms defined in a neighborhood of $\Omega_0$ for $0 \leq  \epsilon < \epsilon_0$, such that $\phi_0 = \Id$.  Denote $\Omega_\epsilon = \phi_\epsilon(\Omega_0)$ and let $K_\epsilon$ be a $C^1$ family of smooth functions on $\partial \Omega_\epsilon = \phi_\epsilon(\partial \Omega_0)$.
The PDE we are interested in is
\begin{align}\label{PDE}
\begin{cases}
- \Delta u_\epsilon = \lambda^2(\epsilon) u_\epsilon, & x \in \Omega_\epsilon,\\
\frac{\partial u_\epsilon}{\partial \nu} = K_\epsilon u_\epsilon, & x \in \partial \Omega_\epsilon.
\end{cases}
\end{align}
The equation \eqref{PDE} is said to have Robin boundary condition with Robin function $K_\epsilon$. As $K_\epsilon \in C^\infty (\d \Omega_\epsilon)$,  it is shown in \cite{Taylor2} that $- \Delta$ is self adjoint on $L^2(\Omega_\epsilon)$ with densely defined domain $D = \{ u \in H^2(\Omega_\epsilon) : \d_\nu u_\epsilon = K_\epsilon u_\epsilon \}$. The analytic Fredholm theorem guarantees that \eqref{PDE} has nontrivial solutions only for a discrete collection of eigenvalues $\lambda_j^2 \in \R$, comprising the entire spectrum which we denote by $\text{Spec}(\Omega_\epsilon)$. We may assume $\phi_\epsilon(x) = x + \rho_\epsilon(x) \nu_x$ for $x \in \d \Omega_0$, where $\nu_x$ is the outward unit normal to $\d \Omega_0$ and $\rho_\epsilon$ is a smooth function on $\d \Omega_0$. We also impose the restriction that $\rho_\epsilon$ and $K_\epsilon$ should be invariant under the symmetry group of the ellipse, which is generated by reflections through the coordinate axes and is isomorphic to the Klein four-group. We denote the Laplace operator with boundary conditions above by $\Delta_{\epsilon}$ and prove the following result:
\begin{theo}\label{main}
Let $\phi_\epsilon(\Omega_0) = \Omega_\epsilon$ be a $C^1$ deformation of the ellipse through smooth domains with $\Z_2 \times \Z_2$ symmetries and $K_\epsilon$ a $C^1$ family of Robin functions with the same symmetries. If the deformation is isospectral, i.e. $\text{Spec}(\Delta_\epsilon) = \text{Spec}(\Delta_0)$ for $0 \leq \epsilon < \epsilon_0$, then $\dot{\rho} = \dot{K} = 0$.
\end{theo}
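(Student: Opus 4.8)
I would prove this via the wave trace. The heat/wave invariants of $\Delta_\epsilon$ are spectral invariants, so isospectrality forces the variation of the wave trace $\sum_j \cos(t\lambda_j(\epsilon))$ to vanish (as a distribution in $t$) to first order in $\epsilon$. The singularities of the wave trace for the ellipse are located at the lengths of periodic billiard trajectories; the ellipse is integrable, so its periodic orbits organize into the one‑parameter families tangent to confocal caustics, indexed by rotation number $\omega \in (0,1/2)$ via action–angle coordinates for the billiard map. The plan is to compute the first variation of the wave trace near the length $L(\omega)$ of such a family, express it as an oscillatory integral over the family, and extract from it — for a dense set of $\omega$ — linear constraints on $\dot\rho$ and $\dot K$ that, taken together over all $\omega$, force both to vanish.

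**The two ingredients and how they combine.** The first ingredient is a Hadamard‑type variational formula: differentiating \eqref{PDE} in $\epsilon$ and using Green's identity, the variation of an eigenvalue (or, in the trace, of the resolvent/wave kernel) picks up a boundary term linear in $\dot\rho$ — through the curvature, the normal derivative data $|\partial_\nu u|^2$, and the tangential derivatives — plus a term linear in $\dot K$ weighted by $|u|^2$ on the boundary; this is the variable‑Robin analogue of the classical Hadamard formula, which the paper says it establishes. The second ingredient is a microlocal parametrix for the interior wave propagator near a geodesic loop, so that one can write the trace variation near $t = L(\omega)$ as an explicit oscillatory integral whose phase is governed by the generating function of the billiard map in action–angle coordinates and whose amplitude encodes the boundary restriction of eigenfunctions along the caustic family. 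Plugging Hadamard's formula into this parametrix, the leading‑order term of the trace variation at $t=L(\omega)$ becomes $\int_{\mathbb{T}} \big( a(\theta,\omega)\,\dot\rho(\theta) + b(\theta,\omega)\,\dot K(\theta)\big)\, d\theta$ for explicit, nonvanishing densities $a,b$ built from the billiard dynamics (these are, up to the known ellipse eigenfunction asymptotics, the functions one integrates against in the Birkhoff normal form / action–angle picture). Isospectrality kills this integral for every $\omega$ in a dense set.

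**Extracting rigidity from the family of constraints.** Now I would exploit the $\Z_2\times\Z_2$ symmetry: expand $\dot\rho$ and $\dot K$ in the basis adapted to the elliptic coordinate $\theta$ (even cosine modes, by symmetry). The key point — this is where integrability of the ellipse does the work — is that as $\omega$ ranges over $(0,1/2)$ the densities $a(\cdot,\omega)$, $b(\cdot,\omega)$ "rotate through" all the relevant Fourier modes, so the vanishing of $\int (a\dot\rho + b\dot K)$ for all $\omega$ is an overdetermined linear system on the two sequences of Fourier coefficients. Concretely I expect the $\omega \to 1/2$ (bouncing‑ball, i.e. the minor axis) and $\omega \to 0$ (whispering‑gallery, near the boundary) asymptotics of $a$ and $b$ to be computable separately, letting one first isolate $\dot K$ (the whispering‑gallery limit concentrates eigenfunctions on $\partial\Omega$, so it sees $|u|^2\dot K$ most cleanly) and then, with $\dot K=0$ in hand, run the classical Hezari–Zelditch–type argument to conclude $\dot\rho = 0$. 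The infinite‑order statement in the abstract then follows by the standard reparametrization trick: any nonzero jet of $(\dot\rho,\dot K)$ could be normalized to have a lowest‑order term of definite parity, contradicting the first‑variation vanishing after a symmetry‑preserving change of the deformation parameter.

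**Main obstacle.** The crux is the second ingredient: constructing the interior wave parametrix microlocally near the geodesic loops of the ellipse and identifying the amplitude in action–angle coordinates precisely enough that the resulting density against $(\dot\rho,\dot K)$ is provably nonvanishing and has controllable $\omega$‑asymptotics. The difficulty is that these loops hit the boundary, so one must propagate through reflections and track how the Robin condition (rather than Dirichlet/Neumann) enters the reflection coefficient — the Robin case contributes an extra subprincipal term proportional to $K$, which is exactly the term that must be shown not to be absorbed into the $\dot\rho$ contribution. Separating the $\dot\rho$ and $\dot K$ contributions (showing the $2\times\infty$ linear system has trivial kernel), rather than merely controlling their sum, is the real work; degeneracies where $a(\cdot,\omega)$ and $b(\cdot,\omega)$ become proportional for special $\omega$ would have to be handled by using a range of $\omega$'s simultaneously.
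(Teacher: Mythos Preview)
Your broad strategy matches the paper: Hadamard-type variational formula for the wave trace with Robin conditions, an explicit oscillatory-integral parametrix near periodic billiard families using action--angle coordinates, and extraction of constraints on $(\dot\rho,\dot K)$ from the trace variation at the sequence of simple lengths $T_j$ with rotation number $1/j$ as $j\to\infty$, followed by an analyticity/Stone--Weierstrass argument (this is the Guillemin--Melrose mechanism, not a Fourier-mode-by-mode inversion). So the architecture is right.

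The real gap is in your ``main obstacle'' paragraph. You frame the separation of $\dot\rho$ and $\dot K$ as needing to show a coupled $2\times\infty$ system $\int(a\dot\rho + b\dot K)=0$ has trivial kernel, and you propose to isolate $\dot K$ first via whispering-gallery asymptotics. In the paper the separation is far more structural and happens in the \emph{opposite order}: in the variational trace formula
\[
\delta\,\mathrm{Tr}\cos t\sqrt{-\Delta_\epsilon}=\int_{\partial\Omega_0} L_R^b(t,q,q)\,\dot\rho + \tfrac{t}{2}S_R(t,q,q)\,\dot K\,dq,
\]
the kernel $L_R^b$ involves second-order differentiation of $S_R$, so the $\dot\rho$-term contributes a singularity of order $(t-T_j)^{-5/2}$ while the $\dot K$-term contributes only $(t-T_j)^{-1/2}$. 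The two contributions live at \emph{different orders} of the singularity expansion, so there is no coupled system to untangle: the leading coefficient is an integral in $\dot\rho$ alone, and its vanishing for all $j$ (via the analyticity-in-$\lambda^2$ and Stone--Weierstrass argument) forces $\dot\rho=0$ first. Only after substituting $\dot\rho=0$ does the $\dot K$-integral become the new leading term, and the same argument then gives $\dot K=0$. Your worry about $a(\cdot,\omega)$ and $b(\cdot,\omega)$ becoming proportional, and your plan to use the $\omega\to 1/2$ bouncing-ball regime, are therefore unnecessary; the paper uses only the $\omega\to 0$ (large $j$) regime throughout.
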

Here, we have written $\dot{\rho}(x) = \frac{d}{d \epsilon}\big|_{\epsilon = 0} \rho_{\epsilon}(x)$ and $\dot{K}(x) = \frac{d}{d \epsilon}\big|_{\epsilon = 0} K_\epsilon(x + \rho_{\epsilon}(x) \nu_x)$ to denote the first variations, which are studied in more detail in Section \ref{Variation of wave trace}. We also use the notation $\delta = \frac{d}{d \epsilon} \big|_{\epsilon = 0}$. We then say the ellipse is infinitesimally spectrally rigid through domains and Robin boundary conditions with the symmetries of an ellipse. The prefix ``infinitesimal'' means that only the first variation vanishes. If it could be shown that no nontrivial such isospectral deformations exist, we would say the ellipse is spectrally rigid amongst such domains. As a quick corollary to Theorem \ref{main}, we have:
\begin{coro}\label{analytic}
There are no nontrivial analytic isospectral deformations of the ellipse through $\Z_2 \times \Z_2$ symmetric domains and Robin functions.
\end{coro}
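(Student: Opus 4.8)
The plan is to upgrade Theorem~\ref{main} from a statement about the first variation to a statement about the entire deformation, using analyticity in $\epsilon$ together with the fact that the mechanism behind Theorem~\ref{main} is essentially linear. Suppose $(\rho_\epsilon,K_\epsilon)$ is an analytic isospectral deformation of the ellipse through $\Z_2\times\Z_2$-symmetric domains and Robin functions, and suppose toward a contradiction that it is nontrivial. Since $\epsilon\mapsto(\rho_\epsilon,K_\epsilon)$ is real-analytic with values in $C^\infty(\d\Omega_0)$ (after the pullback used to define $\dot K$), there is a smallest $k\ge 1$ with nonvanishing Taylor coefficient $(\rho^{(k)},K^{(k)}):=\tfrac{1}{k!}\tfrac{d^k}{d\epsilon^k}\big|_{\epsilon=0}(\rho_\epsilon,K_\epsilon)$, so that $\rho_\epsilon=\rho_0+\epsilon^k\rho^{(k)}+O(\epsilon^{k+1})$ and $K_\epsilon=K_0+\epsilon^kK^{(k)}+O(\epsilon^{k+1})$. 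Each Taylor coefficient inherits the Klein-four symmetry of the family, so $\rho^{(k)}$ and $K^{(k)}$ have the symmetries of the ellipse, and Theorem~\ref{main} already forces $k\ge 2$.

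The key step is to extract a constraint on $(\rho^{(k)},K^{(k)})$ from isospectrality by differentiating the wave trace $k$ times. Isospectrality says the even wave trace $w_\epsilon(t)=\operatorname{Tr}\cos(t\sqrt{\Delta_\epsilon})$ is $\epsilon$-independent as a distribution in $t$, hence $\tfrac{d^k}{d\epsilon^k}\big|_{\epsilon=0}w_\epsilon=0$. By the Hadamard-type variational formula and the oscillatory-integral representation of Section~\ref{Variation of wave trace}, the singularity of $w_\epsilon$ near each loop length of the ellipse depends smoothly on the deformation data $(\rho_\epsilon-\rho_0,K_\epsilon-K_0)$, with linearization at the trivial deformation given precisely by the first-variation functional $\ell$ whose vanishing drives the proof of Theorem~\ref{main}. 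Taylor-expanding this representation in the deformation data and using that every Taylor coefficient of order $<k$ vanishes, all contributions at least quadratic in $(\rho_\epsilon-\rho_0,K_\epsilon-K_0)$ are $O(\epsilon^{2k})$; so the $\epsilon^k$-coefficient of $w_\epsilon$ equals $\ell(\rho^{(k)},K^{(k)})$, whence $\ell(\rho^{(k)},K^{(k)})=0$. Since the proof of Theorem~\ref{main} uses isospectrality only through the vanishing of $\ell$ on a $\Z_2\times\Z_2$-symmetric pair, running that argument verbatim with $(\dot\rho,\dot K)$ replaced by $(\rho^{(k)},K^{(k)})$ gives $\rho^{(k)}=K^{(k)}=0$, contradicting the minimality of $k$. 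Therefore all Taylor coefficients of $\epsilon\mapsto(\rho_\epsilon,K_\epsilon)$ vanish at $\epsilon=0$; by real-analyticity on the connected interval $[0,\epsilon_0)$ the family is constant, i.e. $\Omega_\epsilon\equiv\Omega_0$ and $K_\epsilon\equiv K_0$, so the deformation is trivial. (The same computation applied to the formal Taylor series of a merely smooth isospectral deformation is what yields the ``vanishing to infinite order'' statement announced in the introduction.)

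The main obstacle is the bookkeeping in the middle step: one must verify that the wave-trace representation of Section~\ref{Variation of wave trace} is jointly smooth enough in $\epsilon$ (through its dependence on $\Omega_\epsilon$ and $K_\epsilon$) for a Fa\`a di Bruno expansion of $w_\epsilon$ to order $k$ to be legitimate, and that the unique contribution at order $\epsilon^k$ is the linear term $\ell(\rho^{(k)},K^{(k)})$ rather than some interaction of lower variations --- which is exactly where the analyticity in $\epsilon$ and the explicitness of the interior parametrix get used. Everything else reduces to an application of Theorem~\ref{main} and the identity theorem for real-analytic curves on a connected interval.
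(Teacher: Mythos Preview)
Your argument reaches the right conclusion but takes a different route than the paper. The paper appeals to a \emph{reparametrization} argument (citing Section~3.2 of \cite{HeZe12}): if the first $k-1$ Taylor coefficients of $(\rho_\epsilon,K_\epsilon)$ at $\epsilon=0$ vanish, one sets $s=\epsilon^k$ and considers the reparametrized family $\tilde\rho_s=\rho_{s^{1/k}}$, $\tilde K_s=K_{s^{1/k}}$. Writing $\rho_\epsilon-\rho_0=\epsilon^k h(\epsilon)$ with $h$ analytic and $h(0)=\rho^{(k)}$, one checks that $\tilde\rho_s=\rho_0+s\,h(s^{1/k})$ is $C^1$ in $s$ with $\tfrac{d}{ds}\big|_{s=0}\tilde\rho_s=\rho^{(k)}$, and similarly for $K$. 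The reparametrized family is still isospectral and $\Z_2\times\Z_2$-symmetric, so Theorem~\ref{main} applies to it \emph{as a black box} and yields $\rho^{(k)}=K^{(k)}=0$. Induction on $k$ gives flatness at $\epsilon=0$, and analyticity finishes.

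Your approach instead differentiates $w_\epsilon$ directly $k$ times and argues via a Fa\`a di Bruno expansion that when the lower coefficients vanish, the $\epsilon^k$-term is exactly $\ell(\rho^{(k)},K^{(k)})$. This is morally the same mechanism, but it forces you to know that the wave trace (or its singularity coefficients near the simple lengths $T_j$) is a $C^k$ functional of the deformation data --- something Section~\ref{Variation of wave trace} does not establish, since only the \emph{first} variation is derived there. You rightly flag this as the main obstacle. The virtue of the reparametrization argument is precisely that it sidesteps this: no higher-order Hadamard formula is needed, only the $C^1$ statement already proved. Your final parenthetical about smooth deformations is correct and is exactly what the reparametrization argument gives.
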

This follows from a simple reparametrization argument which can be found in Section 3.2 of \cite{HeZe12}. One of the main ingredients in our proof is a version of Hadamard's variational formula for variable Robin boundary conditions, which also appears to be new in the literature:
\begin{theo}\label{Hadamard}
Let $G_\epsilon$ be the Green's kernel for the eigenvalue problem with Robin boundary conditions on $\Omega_\epsilon$. Then, for $x, y \in \text{int}(\Omega_0)$, $\delta G_\epsilon(\lambda,x, y)$ is the distribution
\begin{align*}
\int_{\partial \Omega_0} -\langle \nabla_2^T G_0(\lambda, x, q), \nabla_1^T G_0 (\lambda, q, y)\rangle \dot{\rho} + (\lambda^2 \dot{\rho} & + K_0^2 \dot{\rho} + K_0 \kappa \dot{\rho} + \dot{K}) G_0(\lambda, x, q) G_0(\lambda, q, y) dq.
\end{align*}
\end{theo}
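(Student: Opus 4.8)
The strategy is to differentiate the defining equations of the Green's kernel $G_\epsilon(\lambda,x,y)$ in $\epsilon$ and to interpret the resulting boundary-value problem via Green's identity. Recall $G_\epsilon$ solves $(-\Delta - \lambda^2) G_\epsilon(\lambda,\cdot,y) = \delta_y$ on $\Omega_\epsilon$ with $\partial_\nu G_\epsilon = K_\epsilon G_\epsilon$ on $\partial\Omega_\epsilon$. The first obstacle is that $G_\epsilon$ lives on the moving domain $\Omega_\epsilon$, so before differentiating I would pull everything back to $\Omega_0$ via $\phi_\epsilon$, or equivalently work with the variation $\dot G$ of the extended/restricted kernel as in the classical Hadamard computation (cf. \cite{HeZe12}). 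Away from the boundary $\dot G(\lambda,\cdot,y)$ satisfies the \emph{homogeneous} Helmholtz equation $(-\Delta-\lambda^2)\dot G(\lambda,\cdot,y) = 0$ in $\text{int}(\Omega_0)$, since the $\delta_y$ on the right is $\epsilon$-independent once $y$ is interior; hence $\dot G$ is determined by its Cauchy data on $\partial\Omega_0$, and the whole problem reduces to computing that Cauchy data from the variation of the Robin condition.

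Next I would carry out the boundary bookkeeping. Writing $\phi_\epsilon(x) = x + \rho_\epsilon(x)\nu_x$ on $\partial\Omega_0$, the normal derivative, the domain, and the Robin function all move; differentiating the constraint $\partial_{\nu_\epsilon} G_\epsilon = K_\epsilon G_\epsilon$ at $\epsilon = 0$ produces terms involving (i) the variation of the unit normal and of the boundary defining function, which bring in the geodesic curvature $\kappa$ of $\partial\Omega_0$ and tangential derivatives $\nabla^T$, (ii) the normal derivative of the $\lambda^2 G_0$ equation, which after using $(-\Delta - \lambda^2) G_0 = 0$ near the boundary converts a $\partial_\nu^2$ into $-\lambda^2 G_0 - \Delta^T G_0$ type terms, (iii) the term $\dot K\, G_0$, and (iv) quadratic terms $K_0^2\dot\rho\, G_0$ and $K_0\kappa\dot\rho\, G_0$ coming from iterating the Robin relation. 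This is where the explicit coefficients $\lambda^2\dot\rho + K_0^2\dot\rho + K_0\kappa\dot\rho + \dot K$ and the tangential-gradient pairing will emerge; the main labor — and the step I expect to be the principal obstacle — is tracking the first variation of the normal vector field and the induced boundary measure carefully enough that all curvature and tangential-derivative terms assemble into exactly the stated combination, with the correct signs.

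Finally I would represent $\dot G$ through $G_0$ by Green's representation: using the second Green identity on $\Omega_0$ for the pair $\dot G(\lambda,\cdot,y)$ and $G_0(\lambda,x,\cdot)$, the interior integral vanishes (both solve the homogeneous Helmholtz equation there, modulo the delta which reproduces $\dot G(\lambda,x,y)$), leaving a boundary integral over $\partial\Omega_0$ of the Cauchy data of $\dot G$ against $G_0$ and $\partial_\nu G_0$. Substituting the boundary data computed in the previous step, and using the Robin relation $\partial_\nu G_0 = K_0 G_0$ to eliminate normal derivatives of $G_0$ in favor of $K_0 G_0$, collapses everything to an integral over $q \in \partial\Omega_0$ of the form $\int_{\partial\Omega_0}\big[-\langle\nabla_2^T G_0(\lambda,x,q),\nabla_1^T G_0(\lambda,q,y)\rangle\dot\rho + (\lambda^2\dot\rho + K_0^2\dot\rho + K_0\kappa\dot\rho + \dot K) G_0(\lambda,x,q) G_0(\lambda,q,y)\big]\,dq$, which is the claimed distribution. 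A remark on regularity: since $x,y$ are interior, $G_0(\lambda,\cdot,q)$ is smooth near $\partial\Omega_0$, so all pairings above are genuine smooth integrands and the identity holds as an equality of meromorphic-in-$\lambda$ distributions in $(x,y)$, as asserted.
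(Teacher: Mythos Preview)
Your plan is sound and would prove the theorem, but it follows the classical ``strong form'' Hadamard route rather than the paper's weak-formulation approach. The paper starts from the weak identity
\[
\int_{\Omega_\epsilon} e(u_\epsilon,\phi_\epsilon^{-1*}v)\,dV = \int_{\Omega_\epsilon} f_\epsilon\,\phi_\epsilon^{-1*}v\,dV + \int_{\partial\Omega_\epsilon} K_\epsilon u_\epsilon\,\phi_\epsilon^{-1*}v\,dq_\epsilon,
\]
pulls everything back to $\Omega_0$, and differentiates in $\epsilon$ using the Lie-derivative formalism $\theta_X = \delta + X$ of \cite{Pe80}. The variation of the energy density is handled by a short lemma ($\theta_X e(u,v) = Xe(u,v) - e(Xu,v) - e(u,Xv)$), the boundary measure contributes $\kappa\dot\rho\,dq$, and after an integration by parts and cancellation using the Robin condition for \emph{both} $u_0$ and $v$, the stated formula drops out upon specializing $u_\epsilon = G_\epsilon(\lambda,\cdot,y)$, $v = G_0(\lambda,x_0,\cdot)$.

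The practical difference is where the hard work sits. In your scheme the difficulty concentrates in the step you correctly flag as the obstacle: computing $(\partial_\nu - K_0)\dot G|_{\partial\Omega_0}$ by differentiating the Robin condition on the moving boundary, which forces you to track the variation of $\nu_\epsilon$, convert $\partial_\nu^2 G_0$ via $\Delta = \partial_\nu^2 + \kappa\partial_\nu + \Delta^T$, and assemble the curvature terms by hand. The paper's weak formulation sidesteps the explicit normal-variation and second-normal-derivative bookkeeping entirely --- those contributions are packaged into the variation of the energy density and the divergence theorem, and the Robin condition is only used at the end to collapse $\langle\nabla u_0,\nabla v\rangle = \langle\nabla^T u_0,\nabla^T v\rangle + K_0^2 u_0 v$. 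Both routes are correct; the paper's is a bit more systematic for domain deformations and generalizes readily (it is immediately reused for the wave-trace and eigenvalue variations), while yours is closer to the textbook Hadamard computation and perhaps more transparent for a single Green's-function identity.
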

Here, $\nabla_i^T$ denotes the tangential derivative in the $i$th spatial variable and $dq$ is the natural line element on $\d \Omega_0$ inherited from the flat metric on $\R^2$. The Green's kernel or Green's function is the Schwartz kernel of the resolvent $(- \Delta - \lambda^2)^{-1}$ for $\Im \lambda^2 > 0$. In Section 3, this result is extended to variational formulas for both the even wave trace $\text{Tr} \cos(t\sqrt{-\Delta_\epsilon})$ and simple eigenvalues. In particular, we prove the following in Section \ref{Converting the singularity into an elliptic integral}:
\begin{theo}\label{wavetrace variation}
For the ellipse $\Omega_0 = \{(x,y): \frac{x^2}{a^2} + \frac{y^2}{b^2} \leq 1\}$, with boundary parametrized by $(x,y) = (a\cos \phi, b \sin \phi)$, the variation of the wave trace near a simple length $T_j \in \text{Lsp}(\Omega_0)$ corresponding to a caustic of rotation number $1/j$ is given by
\begin{align*}
\delta \text{Tr} \cos t \sqrt{-\Delta_\epsilon} &= c_j t \Re \left\{ e^{i \pi/4}(t - T_j - i0^+)^{-5/2}\right\} + L.O.T.,
\end{align*}
and the constants $c_j$ are given by
\begin{align*}
c_j = \int_0^{2\pi} P(\lambda_j) Q(\phi)  \frac{\dot{\rho}(\phi) d\phi}{\sqrt{(b^2 + (a^2 - b^2) \sin^2 \phi) - \lambda_j^2}}.
\end{align*}
Here, L.O.T. denotes lower order distributional terms and $\lambda_j$ is the parameter of the confocal ellipse
\begin{align*}
\frac{x^2}{a^2 - \lambda_j^2} + \frac{y^2}{b^2 -\lambda_j^2} = 1,
\end{align*}
to which periodic orbits of length $T_j$ are tangent. $P$ and $Q$ are certain nonzero analytic functions. Moreover, if $\dot{ \rho} = 0$, then the leading order term in the singularity expansion near $T_j$ becomes
\begin{align*}
\delta \text{Tr} \cos t \sqrt{-\Delta_\epsilon} = \widetilde{c_j} t \Re\left\{e^{\sigma i \pi /4} (t - T - i0^+)^{-1/2}\right\} + L.O.T.,
\end{align*}
where the constants $\widetilde{c_j}$ are given by
\begin{align*}
\widetilde{c_j} = \int_0^{2\pi}  \widetilde{P}(\lambda_j) \widetilde{Q}(\phi) \frac{\dot{K}(\phi) d\phi}{\sqrt{(b^2 + (a^2 - b^2)\sin^2\phi) - \lambda_j^2}}.
\end{align*}
$\widetilde{P}$ and $\widetilde{Q}$ are again certain nonzero analytic functions.
\end{theo}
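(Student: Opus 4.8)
The plan is to start from the Hadamard variational formula (Theorem \ref{Hadamard}) and convert it into a variational formula for the wave trace via the standard relation between $\mathrm{Tr}\cos t\sqrt{-\Delta_\epsilon}$ and the boundary trace of the Green's kernel. Concretely, I would differentiate the resolvent trace in $\epsilon$, integrate the expression in Theorem \ref{Hadamard} over the diagonal, and then Fourier transform in $\lambda$ to pass to the wave side. This reduces the problem to understanding the singularity, as $t \to T_j$, of an oscillatory integral over $\partial\Omega_0 \times \partial\Omega_0$ (or, after localizing near a geodesic loop, a smaller piece of phase space) whose amplitude involves $\dot\rho$, $\dot K$, $K_0$, the curvature $\kappa$, and products/derivatives of the boundary-restricted Green's kernel $G_0$. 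The key structural input — and this is where the ``explicit parametrix for the wave propagator in the interior, microlocally near geodesic loops'' advertised in the abstract enters — is a WKB/Lagrangian description of $G_0(\lambda,q,q')$ for $q,q'$ near a loop of rotation number $1/j$: the phase is the generating function of the $j$-th iterate of the billiard map, which by the complete integrability of the elliptic billiard is most naturally written in action-angle coordinates, and the tangency to the confocal caustic of parameter $\lambda_j$ is exactly the condition that selects the relevant critical manifold.

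The main computational step is a stationary phase / degenerate stationary phase analysis in the boundary variables. For a simple length $T_j$ associated to a one-parameter family of loops tangent to a fixed caustic, the critical set of the phase is a circle (parametrized by $\phi$), so the phase is degenerate transversally but has a clean fold-type behavior along the family; carrying out stationary phase in the transverse directions produces the Jacobian factor $\bigl(b^2 + (a^2-b^2)\sin^2\phi - \lambda_j^2\bigr)^{-1/2}$, which is precisely the square-root singularity at the caustic point, while the remaining integration in $\phi$ is the nondegenerate $d\phi$ integral displayed in the statement. Collecting the powers of $\lambda$ (equivalently, the order of the singularity in $t - T_j$) from the number of transverse stationary phase directions, the half-integer jump from the boundary layer, and the time derivative in $t\,\mathrm{Re}\{\cdots\}$ yields the claimed exponent $-5/2$ and the $e^{i\pi/4}$ phase, with $P(\lambda_j)$ absorbing all the $\phi$-independent normalization (Maslov factors, Leray density on the caustic, symbol of the parametrix at the loop) and $Q(\phi)$ the $\phi$-dependent but $\dot\rho$-independent part of the amplitude. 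The analyticity and non-vanishing of $P$ and $Q$ follow from their explicit expressions in terms of elliptic-integral data for the ellipse, which are real-analytic and non-degenerate on the relevant caustic range $\lambda_j^2 < b^2 + (a^2-b^2)\sin^2\phi$.

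For the second assertion, I would set $\dot\rho \equiv 0$ from the outset. Then the $-\langle\nabla_2^T G_0,\nabla_1^T G_0\rangle\dot\rho$ term and the three $\dot\rho$-proportional terms in Theorem \ref{Hadamard} all drop, leaving only the $\dot K\, G_0(\lambda,x,q)G_0(\lambda,q,y)$ contribution. The amplitude is then one order less singular in $\lambda$ (it lacks the tangential-derivative pair that in the $\dot\rho$ case contributed two extra powers of the dual variable), so the same degenerate stationary phase argument produces the weaker singularity $(t-T-i0^+)^{-1/2}$ with a Maslov sign $\sigma$ and new $\phi$-independent / $\phi$-dependent analytic non-vanishing factors $\widetilde P,\widetilde Q$, and the surviving $d\phi$-integral against $\dot K(\phi)$ with the identical caustic denominator. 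The hard part will be the bookkeeping of the degenerate stationary phase along the caustic family — keeping track of Maslov indices, the precise Leray/Gelfand–Leray measure on the critical circle, and the microlocal matching of the interior parametrix with the boundary Green's kernel — so as to be certain that $P,Q$ (and $\widetilde P,\widetilde Q$) are genuinely non-vanishing rather than accidentally zero; the integrability of the billiard and the resulting closed-form action-angle expressions are what make this tractable, and the non-vanishing ultimately reduces to the non-degeneracy of the elliptic billiard's rotation function, which is classical.
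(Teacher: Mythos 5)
Your proposal reaches the right destination but takes a genuinely different route from the paper, and it is worth pinpointing where they diverge and what each gains. After the initial step (converting the Green's-kernel variation into a wave-trace variation, which the paper does via Kato's contour-integral formula and eigenfunction expansions in the proof of Theorem \ref{wavetrace}, not via a direct Fourier transform of the resolvent trace in $\lambda$), you propose to represent the boundary restriction of $G_0$ as a Lagrangian distribution and carry out a degenerate (clean) stationary phase over the caustic circle, with the factor $\bigl(b^2+(a^2-b^2)\sin^2\phi-\lambda_j^2\bigr)^{-1/2}$ arising as the Jacobian of the transverse stationary phase. That is the Duistermaat--Guillemin/Hezari--Zelditch route, and the paper explicitly declines to take it (see the discussion opening Section \ref{A parametrix for SR and singularity expansion}). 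Instead, the paper constructs an explicit one-frequency oscillatory-integral parametrix $S_j^k(t,x,y)=\int_0^\infty e^{\pm i\tau(t-\Psi_j^k(x,y))}a\,d\tau$ whose phase is the length functional of one of the eight near-loops supplied by Lemma \ref{phase function}, restricts to the boundary diagonal, and observes via Poncelet's theorem that $\Psi_j(q,q)\equiv T_j$ is \emph{constant}. Consequently the $\phi$-integral in the final formula is \emph{not} an oscillatory integral at all; there is no stationary phase in $\phi$, clean or otherwise. The caustic denominator $\sqrt{C(\phi)-\lambda_j^2}$ comes out of the amplitude: it is the half-density factor $|A_j|^{1/2}$ of the parametrix, reduced in Lemma \ref{Aj w derivatives} to $|f^5(\sin\omega)^{-1}\d\omega/\d\theta|$ and then made explicit through the action-angle coordinate computation of $\d\omega/\d\theta$ in Section \ref{d omega d theta} and the caustic relation $\lambda^2=\sin^2\omega\,C(\phi)$ of Lemma \ref{angle caustic lemma}.

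The substantive content you elide in "bookkeeping" is not incidental; it is where the paper's proof actually lives. In particular: Lemma \ref{phase function} classifies the eight near-loops connecting two interior points near $\Delta\partial\Omega$ into $TT,TN,NT,NN$ types, and Lemma \ref{Sj lemma} tracks which of these (with $j-1$, $j$, or $j+1$ reflections) collapse to a $j$-reflection closed orbit in the boundary limit --- without this the parametrix does not even restrict correctly to the boundary. Likewise, the claim that the stationary-phase Jacobian would produce exactly the factor $(C(\phi)-\lambda_j^2)^{-1/2}$ needs a proof; it is not a priori the same object as the paper's amplitude factor, since for fixed $j$ this factor never actually vanishes (we always have $\lambda_j^2 < b^2 \leq C(\phi)$), so describing it as "the square-root singularity at the caustic point" is misleading --- its role is to be an analytic function of $\lambda_j^2$ suited to the subsequent Taylor-flatness argument as $j\to\infty$. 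The two routes likely produce the same answer in the end (the paper's concluding remark compares with the Hezari--Zelditch formula precisely to make this point), but your route would require redoing the clean-intersection/Leray-measure calculation of \cite{DuGu75} and \cite{GuMe79a} for the Robin propagator, whereas the paper's route replaces it with the explicit change-of-variables computation of $A_j$ in elliptical polar coordinates. Your account of the $-5/2$ versus $-1/2$ order jump and of the role of $\Z_2\times\Z_2$ symmetry is consistent with the paper's, but it is a description of the outcome, not a derivation.
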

\begin{rema}
In \cite{GuMe79a}, it is shown that there exists a $j_0$ such that for all $j \geq j_0$, the lengths $T_j$ of periodic orbits with rotation numbers $1/j$ are simple. In this context, simplicity of the length $T_j$ means that all periodic orbits of length $T_j$ are tangent to a single confocal conic section. These orbits have rotation number $1/j$, which means that they make $j$ reflections at the boundary and have winding number $1$. In Section \ref{Converting the singularity into an elliptic integral}, we will send $j \to \infty$ and use the constants $c_j$ and $\widetilde{c_j}$ to show that $\dot{ \rho}$ and $\dot{ K}$ vanish.
\end{rema}

\subsection{Schematic outline} In Section \ref{Background}, we review the relevant background on the inverse spectral problem for \eqref{PDE}. Section \ref{Variation of wave trace} is then devoted to proving a version of Hadamard's variational formula with Robin boundary conditions (Theorem \ref{Hadamard}) and extending this to variational formulas for both the localized wave trace near lengths of periodic orbits (Theorem \ref{wavetrace}) and simple eigenvalues (Theorem \ref{simple eigenvalue}). The billiard map and its special properties in the case of an ellipse are introduced in Section \ref{Billiards}. Relevant background information on dynamical systems and various notations are also introduced there which will be used throughout the paper. Section \ref{A parametrix for SR and singularity expansion} contains the most difficult material. It begins by reviewing Fourier integral operators and Chazarain's parametrix and then develops an explicit oscillatory integral parametrix for the wave propagator (Theorem \ref{parametrix with sum}) which is used to prove a variational trace formula (Theorem \ref{distribution with Aj factor}) in Section \ref{Computing the singularity in elliptical polar coordinates}. An important dynamical lemma (Lemma \ref{phase function}) on the structure of approximate geodesic loops is also stated in this section, but its proof is relegated to Section \ref{proof of lemma}. Section \ref{d omega d theta} analyzes the singularity coefficients of the variational trace formula in Theorem \ref{distribution with Aj factor} by using Jacobi elliptic function theory. Section \ref{Converting the singularity into an elliptic integral} converts the variational trace formula into an elliptic integral and then uses a method of Guillemin and Melrose to show that the first variation of both the Robin function and domain vanish, which completes the proof of Theorem \ref{main}. Section \ref{proof of lemma} contains the proof of Lemma \ref{phase function} and is broken up into several intermediate lemmas.

\section{Background}\label{Background}

The inverse spectral problem has a long history, dating back to Kac in 1967, who asked the famous question ``can one hear the shape of a drum?'' In \cite{Kac}, a positive answer to this question was obtained in the special case of a Euclidean ball by using heat invariants and the isoperimetric inequality. Almost immediately afterwards, John Milnor found an example of 16 dimensional isospectral tori which were not isometric. Sunada later generalized this example by using an algebraic method, but the question for planar domains remained open until 1992, when distinct polygonal domains in $\R^2$ were found to be isospectral in \cite{GordonWebbWolpert}. The question is still widely open for convex and/or smooth domains, although there has been significant progress. For example, it is proved in \cite{Zel09} that analytic domains with a single isometric involution are spectrally determined assuming some additional generic dynamical constraints on the length spectrum. Melrose also showed that the set of isospectral planar domains is precompact in the $C^\infty$ topology by a careful analysis of the heat invariants (see \cite{MelroseIsospectralCompactness}). This was improved in \cite{POS2}, \cite{POS1} and \cite{POS0}, where the authors proved genuine $C^\infty$ compactness of the isospectral set. This result is based on the Polyakov formula for $\zeta$-regularized determinants and applies to both bounded planar domains and closed surfaces. Recently, Hezari and Zelditch showed in \cite{HZ2} that ellipses of small eccentricity are spectrally determined amongst all smooth planar domains, which is the first positive result in such generality since Kac's original paper \cite{Kac}. Thorough surveys of the inverse spectral problem are contained in \cite{ZelditchSurvey2014}, \cite{ZelditchSurvey2}, \cite{HezariDatchevSurvey} and \cite{Melrosesurvey}.
\\
\\
Dual to the Laplace spectrum is the so called \textit{length spectrum}, which is a discrete set of numbers containing the lengths of periodic orbits for the geodesic or billiard flow. The same inverse problem exists: can one determine a manifold up to isometry from its length spectrum? The answer is unfortunately negative, as was shown for the case of constant negative curvature in \cite{Vigneras}. However, it is conjectured by Katok and Burns that the \textit{marked length spectrum} does determine a smooth closed manifold up to isometry (\cite{BuKa85}). Here, the marked length spectrum also encodes the homotopy classes of periodic geodesics. Marked length spectral rigidity was recently shown in \cite{GuillarmouLefeuvre} for Anosov manifolds. The relationship with the Laplace spectrum is contained in the Poisson relation, which tells us that the singularities of the wave trace are a subset of the length spectrum. Assymptotic formulas near the singularities are given by the Selberg trace formula for hyperbolic surfaces (\cite{Sel56}), the Duistermaat-Guillemin trace theorem (\cite{DuGu75}) for general manifolds under a dynamical nondegeneracy condition, and a Poisson summation formula for strictly convex bounded planar domains due to Guillemin and Melrose (\cite{GuMe79b}). However, since these trace formulae involve sums over all periodic orbits of a given length, it is theoretically possible that the contributions of distinct orbits having the same length could cancel out and the wave trace is actually smooth near a point in the length spectrum. Hence, without length spectral simplicity, there is no way to deduce Laplace spectral information from the length spectrum alone.\\
\\
While some results on spectral rigidity are known in the chaotic regime (see for example \cite{GK1}, \cite{GK2} and \cite{Tensortomography}), very little is known about the completely integrable setting, in which the flow has a maximal number of conserved quantities (see Section \ref{Billiards}). In the theory of dynamical systems, a famous conjecture of Birkhoff is that the only strictly convex planar domains with completely integrable billiards are ellipses. While this remains an open conjecture, much progress has been made in the local setting. It is shown in \cite{KaSo16} that if a \textit{rationally integrable} billiard table is sufficiently close to an ellipse, then it must be an ellipse. Rational integrability means that for each integer $q \geq 1$, the billiard map has invariant curves of rotation number $1/q$, consisting entirely of periodic points. Using Aubry-Mather theory, the authors then show that ellipses are length spectrally rigid (Corollary 14 of \cite{KaSo16}).\\
\\
The ellipse is smooth, convex and has completely integrable dynamics, which makes it an interesting object to study in the context of spectral theory. In fact:
\begin{conj}[Melrose, \cite{Melrosesurvey}]
Ellipses are spectrally determined.
\end{conj}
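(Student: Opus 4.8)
This is a major open problem, so what follows is a program rather than a complete argument. The idea is to bootstrap the infinitesimal rigidity of Theorem~\ref{main} — together with its Dirichlet counterpart in \cite{HeZe12}, the analytic rigidity of Corollary~\ref{analytic}, and the Hadamard-type first-variation formula of Theorem~\ref{Hadamard} — into a full spectral determination, by coupling it with two global inputs: the $C^\infty$-compactness of isospectral sets (Melrose \cite{MelroseIsospectralCompactness}, in the sharp form of \cite{POS0,POS1,POS2}) and the completely integrable structure of the elliptical billiard, visible in the wave trace through the caustic expansion of Theorem~\ref{wavetrace variation}. Heuristically: infinitesimal rigidity prevents moving the ellipse isospectrally to first order, compactness prevents an isospectral family from escaping to infinity in $C^\infty$, and integrability — read off the length spectrum via the Guillemin--Melrose trace formula \cite{GuMe79b} — should exclude isospectral competitors lying far from the ellipse.

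Concretely I would proceed in three steps. \emph{Step 1 (hearing the symmetry).} Theorem~\ref{main} assumes the competitor is $\Z_2\times\Z_2$-symmetric, so one must first show that any smooth domain $\Omega$ isospectral to a fixed ellipse $\Omega_0$ is itself symmetric. As in \cite{HZ2}, the heat invariants pin down area and perimeter, and the subprincipal wave invariants at the two axial bouncing-ball orbits constrain the boundary jet at the reflection points; the work is to make this argument uniform in the eccentricity rather than perturbative near the disk. \emph{Step 2 (infinitesimal $\Rightarrow$ local rigidity).} Granting symmetry, one wants the ellipse to be isolated in its $C^\infty$ isospectral class. A natural route combines the smooth-category strengthening of Theorem~\ref{main} (flatness of smooth isospectral deformations, underlying Corollary~\ref{analytic}; cf.\ \cite{HeZe12}) with the $C^\infty$-boundedness of the isospectral set and a {\L}ojasiewicz-type inequality for the relevant real-analytic spectral functionals. \emph{Step 3 (local $\Rightarrow$ global).} Finally one must rule out isospectral domains not close to $\Omega_0$. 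By the Guillemin--Melrose formula, the length spectrum of any such $\Omega$ must contain, for every $q$, lengths of orbits of rotation number $1/q$ arranged exactly as the confocal caustics of the ellipse (cf.\ the caustic parameter $\lambda_j$ in Theorem~\ref{wavetrace variation}); if hearing this family could be shown to force rational integrability of the billiard, the rigidity of rationally integrable tables near the ellipse \cite{KaSo16} (or the full Birkhoff conjecture) would conclude.

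I expect Step 3 to be the main obstacle. The Poisson relation only yields the length spectrum as a set, and — as recalled in Section~\ref{Background} — contributions of distinct orbits of equal length may cancel, so the invariant caustic foliation cannot be read directly off the wave trace; even with the length data in hand, there is at present no mechanism that extracts \emph{global} integrability of the dynamics from it. The microlocal parametrix and variational-trace machinery of Sections~\ref{Variation of wave trace} and~\ref{Converting the singularity into an elliptic integral} are inherently infinitesimal and localized near individual geodesic loops; bridging the length spectrum to global dynamical invariants, or establishing connectedness of the isospectral set, would require genuinely new ideas. This is why the present paper settles only the infinitesimal, symmetry-constrained case, and why the conjecture remains open.
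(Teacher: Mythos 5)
You correctly recognize that this is an open conjecture, stated but not proved in the paper; there is no proof in the paper to compare against, and your response rightly offers a program rather than claiming an argument. Your three-step sketch is a reasonable outline of what a proof would have to accomplish, and — importantly — you honestly flag where it breaks down.

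The diagnosis of the gaps is sound. Your Step 1 mirrors the strategy of \cite{HZ2} (heat invariants for area/perimeter, wave invariants at the bouncing-ball orbits for the boundary jet), and you are right that the difficulty is uniformity in eccentricity: the argument in \cite{HZ2} is perturbative near the circle, and no one currently knows how to extract the symmetry of a competitor from spectral data at large eccentricity. Your Step 2 conflates two things that should be kept distinct: what the present paper and \cite{HeZe12} give is vanishing of first variations (and, via reparametrization, flatness of smooth deformations), which is weaker than local isolation of the ellipse in the isospectral set; the {\L}ojasiewicz-type mechanism you invoke is exactly the missing ingredient, and it is not available because the relevant spectral functionals on the infinite-dimensional space of domains are not known to be real-analytic in a sense to which such an inequality applies. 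Your Step 3 is, as you say, the real wall: the Poisson relation gives $\mathrm{Sing\,supp}\,\mathrm{Tr}\cos t\sqrt{-\Delta}\subset\mathrm{Lsp}(\Omega)$ only as a containment of sets, possible cancellations between distinct orbits of equal length are not excluded, and even granting full knowledge of the length spectrum there is no known mechanism forcing rational integrability of the billiard — which is what would be needed to invoke \cite{KaSo16}. The parametrix machinery of Sections~\ref{A parametrix for SR and singularity expansion}--\ref{Converting the singularity into an elliptic integral} is built microlocally near geodesic loops of a \emph{fixed} ellipse and says nothing about a hypothetical isospectral competitor far from it. In short: your proposal is an honest and well-informed map of the open territory, not a proof, and you should not present Steps 1--3 as anything stronger than conditional reductions.
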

In \cite{KaDSWe17}, the authors show that convex domains with $\Z_2$ axial symmetry which are sufficiently close to a circle in $C^k$ are length spectrally rigid. These domains include ellipses of small eccentricity. It is shown in \cite{PeSt92} that generically, convex domains have simple length spectrum and nondegenerate Poincar\'e map. Combining the results in \cite{KaDSWe17} and \cite{PeSt92}, it then follows that $\Z_2$ symmetric domains close to a circle are $\Delta$ spectrally rigid amongst a generic class of symmetric domains. In \cite{HamidRobin}, these results are extended to the Robin Laplacian, where the Robin function on the boundary is also allowed to deform through smooth functions with the same same $\Z_2$ symmetry. Our problem is similar in nature but considers ellipses of arbitrary eccentricity, which might not be close to a circle.\\
\\
The present article is inspired by \cite{HeZe12}, \cite{GuMe79a}, \cite{GuMe79b} and \cite{Pe80}. Guillemin and Melrose proved a version of the Poisson summation formula for bounded planar domains and then used this result in a subsequent article to show that for a fixed ellipse, a $\Z_2 \times \Z_2$ symmetric Robin function on the boundary is completely determined by the spectrum of the associated Laplacian. Hezari and Zelditch then proved infinitesimal spectral rigidity for Dirichlet/Neumann boundary conditions, while only letting the domain deform. In their proof, the authors used the symbol calculus in \cite{DuGu75} to compute the trace of the wave kernel near periodic transversal reflecting rays. Our problem allows both the domain and Robin function to deform simultaneously, which doesn't allow us to directly employ the results in \cite{GuMe79a} or \cite{HeZe12}.
\\
\\
The idea of the proof of Theorem \ref{main} is that if the deformation is isospectral, then the variation of the wave trace should also be zero. The Poisson relation, in this case due to Guillemin and Melrose (\cite{GuMe79a}), tells us that the singularities of the wave trace are contained in the length spectrum $\text{Lsp}(\Omega_0)$, the set of lengths of periodic trajectories for the broken bicharacteristic (billiard) flow. Using microlocal analysis and in particular, Chazarain's parametrix, we can localize the wave kernel near the periodic transversal reflecting rays. From this, we obtain a singularity expansion for the wave trace variation, a Fourier integral distribution, near the length spectrum. We then cook up an oscillatory integral which microlocally approximates this distribution by using a special phase function associated to the billiard map. To do this, we actually construct an explicit parametrix for the microlocalized wave kernel near all orbits tangent to a confocal ellipse of rotation number $1/j$ (Theorem \ref{parametrix with sum}). In particular, this involves finding all orbits making approximately one rotation with a prescribed number of reflections which connect two points in an interior neighborhood of the diagonal of the boundary (Lemma \ref{phase function}). This is of independent interest in the theory of dynamical billiards.
\\
\\
In the special case of the ellipse, we can incoorporate action angle coordinates for the billiard map, which allows us to convert the singularity expansion for the wave trace near $\text{Lsp}(\Omega_0)$ into the product of a nonzero distribution and an elliptic integral as in Theorem \ref{wavetrace variation}. Since this expansion is valid near any simple length, we can take a special sequence of caustics creeping closer and closer to the boundary. Following the ideas in \cite{GuMe79a}, we send $j \to \infty$ and analyze the coefficients $c_j$ and $\widetilde{c_j}$ in Theorem \ref{wavetrace variation}. These coefficients are analytic in the paramater $\lambda^2$ and since $\lambda_j^2 \to 0$ as $j\to \infty$, we see that they are actually flat at $\lambda = 0$. An application of the Stone-Weierstrass theorem then shows that $\dot{\rho} = 0$. Upon substituting $\dot{\rho} = 0$, we obtain a new singularity expansion for the subprincipal term with $\dot{K}$ only. The same tricks show $\dot{K} = 0$.

\section{Variation of the Wave Trace}\label{Variation of wave trace}
In this section, we derive variational formulas for the Green's function, simple eigenvalues and wave trace. The PDE \eqref{PDE} has the weak formulation
\begin{align}\label{weak formulation}
\int_{\Omega_\epsilon} \langle \nabla u_\epsilon , \nabla \phi_\epsilon^{-1 *}v \rangle - \lambda_j^2 u_\epsilon \phi_\epsilon^{-1 *} v\,dV = \int_{\Omega_\epsilon} f_\epsilon \phi_\epsilon^{-1 *}v\,dV + \int_{\partial \Omega_\epsilon} K_\epsilon u_\epsilon \phi_\epsilon^{-1 *}v \,dq_\epsilon,
\end{align}
for any $u_\epsilon \in H^2(\Omega_\epsilon)$ such that $\d_{\nu}u_\epsilon = K_\epsilon u_\epsilon$ (i.e. in the domain of $-\Delta$ with Robin boundary conditions), $v \in C^\infty(\Omega_0)$ and $f_\epsilon \in \mathcal{E}'(\text{int}(\Omega_\epsilon))$ which is an inhomogeneous term for the PDE \eqref{PDE}. Here, $dV = dx^1 \wedge dx^2$ is the volume form on $\R^2$ and $dq_\epsilon$ is the natural surface measure on $\partial \Omega_\epsilon$ induced from the Euclidian metric. We refer to the the quantity
$$
e(u,v) = \langle \nabla u, \nabla v \rangle - \lambda^2 uv
$$
as the energy density.

\subsection{Variational derivatives}
Some care is needed to differentiate the expressions above. We begin by making precise our notion of first variation, following closely the presentation in \cite{Pe80}.
\begin{def1}\label{first variation}
If $u_\epsilon \in C^1([0,\epsilon_0], \mathcal{D}'(\Omega_\epsilon))$ is a $C^1$ family of distributions, we write $\delta u$, $\delta u_\epsilon$ or $\dot{u}$ for the first variation of $u_\epsilon$ at $\epsilon = 0$, as a distribution in $\Omega_0$:
$$
\delta u_\epsilon = \frac{d}{d\epsilon} \Big|_{\epsilon = 0} u_\epsilon.
$$
\end{def1}
To simplify notation, for a single function $u$, we will oftentimes write $\delta u = \dot{u}$ and reserve the use of $\delta$ for preceding long formulas. If $\alpha \in C_0^\infty(\text{int}{\Omega_0})$ is a test function, then $\alpha \in C_0^\infty(\text{int}{\Omega_\epsilon})$ for $\epsilon \ll 1$ and we can define $\delta u$ by
$$
\delta u (\alpha) = \frac{d}{d \epsilon} \Big|_{\epsilon = 0} u_\epsilon(\alpha),
$$
i.e. the derivative of a function from $[0,\epsilon_0)$ to $\R$. The issue with this definition is that if the supports of the distributions $u_\epsilon$ actually intersect $\d \Omega_\epsilon$, then the formula above only defines $\delta u_\epsilon$ in the interior of $\Omega_0$ and not on the boundary even when $u_\epsilon$ is defined there. For instance, the Green's kernel is supported near the boundary in the setting of Robin boundary conditions.\\
\\
To resolve this issue, we follow the ideas in \cite{Pe80}, where some geometric heuristics motivate several precise definitions. The set of smooth domains in $\R^2$ is an infinite dimensional manifold $\Lambda$, on which the Lie group $\text{Diff}(\R^2)$ acts. The Lie algebra of $\text{Diff}(\R^2)$ is the space of smooth vector fields on $\R^2$. An initial domain $\Omega_0$ and a curve $\phi_\epsilon$ in $\text{Diff}(\R^2)$ generate a curve in $\Lambda$, given by $\Omega_\epsilon = \phi_\epsilon(\Omega_0)$. Hence, we can associate elements of the Lie algebra to tangent vectors at $\Omega_0$. For any given deformation of $\Omega_0$, we have an infitessimal generator $X = \frac{d \phi_\epsilon}{d\epsilon} \big|_{\epsilon = 0}$. For a fixed $s \in \R$, we can associate to each $\Omega_0 \in \Lambda$ the fiber $H^s(\Omega_0)$, which is the $L^2$ based Sobolev space of order $s$. This defines a smooth vector bundle over $\Lambda$ on which $\text{Diff}(\R^2)$ acts via pullback (diffeomorphism invariance of the Sobolev spaces). These heuristics motivate the following definition:
\begin{def1}
For a curve $(u_\epsilon, \Omega_\epsilon)$ in the above vector bundle, we define the Lie derivative to be
$$
\theta_X u_\epsilon = \frac{d}{d\epsilon} \Big|_{\epsilon = 0} \phi_\epsilon^* u_\epsilon.
$$
\end{def1}\label{Lie derivative}
We sometimes drop the $\epsilon$ subscript and write $ \theta_X u_\epsilon = \theta_X u$ for simplicity. The advantage of this definition is that it is well defined on the boundary of $\Omega_0$ for $s > 1$, via the Sobolev embedding theorem $H^s \hookrightarrow C^0$. 
\begin{lemm}\label{Lie derivative and first variation}
If we suppose that $u_\epsilon \in H^{s+1}(\Omega_\epsilon)$ is supported away from the boundary and $\theta_X u \in H^{s}(\Omega_0)$, then $\delta u_\epsilon$ exists in $H^s(\Omega_0)$ and
$$
\theta_X u = \delta u_\epsilon + X u_0.
$$
\end{lemm}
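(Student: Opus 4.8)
The plan is to compare the two natural notions of variation — the ``fixed-point'' variation $\delta u_\epsilon = \frac{d}{d\epsilon}|_{\epsilon=0} u_\epsilon$, which only sees the behavior of $u_\epsilon$ at a point $x$ that stays put in $\Omega_0$, and the Lie derivative $\theta_X u = \frac{d}{d\epsilon}|_{\epsilon=0} \phi_\epsilon^* u_\epsilon$, which transports $u_\epsilon$ back to $\Omega_0$ before differentiating — and to show their difference is exactly the transport term $X u_0$. Concretely, I would fix a test function $\alpha \in C_0^\infty(\mathrm{int}\,\Omega_0)$; since $u_\epsilon$ is assumed supported away from $\partial\Omega_\epsilon$ for small $\epsilon$, the pairing $u_\epsilon(\alpha)$ is a genuine smooth function of $\epsilon$ near $0$, and Definition \ref{first variation} gives $\delta u_\epsilon$ as a distribution on $\Omega_0$. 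For $\theta_X u$, write $\phi_\epsilon^* u_\epsilon (\alpha) = u_\epsilon\big((\phi_\epsilon^{-1})^*\alpha \cdot |\det D\phi_\epsilon^{-1}|\big)$ (the Jacobian factor comes from pulling back the volume form), and differentiate the right-hand side in $\epsilon$ at $0$ using the product/chain rule.

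The key computation is then: $\frac{d}{d\epsilon}|_{\epsilon=0}\big[(\phi_\epsilon^{-1})^*\alpha\,|\det D\phi_\epsilon^{-1}|\big] = -X\alpha - (\operatorname{div} X)\alpha = -\operatorname{div}(\alpha X)$, using $\phi_0 = \mathrm{Id}$, $\frac{d}{d\epsilon}|_0 \phi_\epsilon = X$, hence $\frac{d}{d\epsilon}|_0 \phi_\epsilon^{-1} = -X$, and $\frac{d}{d\epsilon}|_0 |\det D\phi_\epsilon^{-1}| = -\operatorname{div} X$. Applying Leibniz to $\phi_\epsilon^* u_\epsilon(\alpha)$,
\[
\theta_X u(\alpha) = \big(\tfrac{d}{d\epsilon}\big|_0 u_\epsilon\big)(\alpha) + u_0\big(-\operatorname{div}(\alpha X)\big) = \delta u_\epsilon(\alpha) + (X u_0)(\alpha),
\]
where in the last step I use that the distributional pairing of $u_0$ with $-\operatorname{div}(\alpha X)$ is by definition $(X u_0)(\alpha)$ (integration by parts against the vector field $X$, with no boundary contribution since $\alpha$ has compact support in the interior). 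This is precisely the claimed identity $\theta_X u = \delta u_\epsilon + X u_0$ on $\mathrm{int}\,\Omega_0$; the Sobolev hypotheses $u_\epsilon \in H^{s+1}(\Omega_\epsilon)$ with $\theta_X u \in H^s(\Omega_0)$, together with $Xu_0 \in H^s$ (since $u_0 \in H^{s+1}$ and $X$ is smooth), then upgrade the a priori distributional identity to an identity of $H^s$ functions and in particular show $\delta u_\epsilon \in H^s(\Omega_0)$ exists as claimed.

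The main obstacle is not the formal manipulation but making sure the differentiation under the pairing is legitimate and that the identity, a priori only valid when tested against $\alpha$ supported in the interior of $\Omega_0$, genuinely characterizes $\delta u_\epsilon$ as an element of $H^s(\Omega_0)$ rather than merely a distribution on the open interior. Here one uses that $u_\epsilon$ is supported away from the boundary uniformly in $\epsilon$, so $u_\epsilon$ (extended by zero) lives in $H^{s+1}(\R^2)$ and the family $\epsilon \mapsto u_\epsilon$ is $C^1$ into $H^{s+1}$ of a fixed neighborhood; then $\delta u_\epsilon$ is literally the $H^{s+1}$-derivative and the interior identity extends by density/continuity. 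I would also remark that this is where the hypothesis ``supported away from the boundary'' is essential: without it the term $X u_0$ would acquire boundary contributions (a surface layer), which is exactly the phenomenon that forces the more careful Hadamard-type analysis for the Green's kernel in Theorem \ref{Hadamard}.
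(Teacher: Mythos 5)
Your proof is correct, but it is not the argument the paper uses. The paper's proof is a one-line algebraic rearrangement of the difference quotient: it writes
\begin{align*}
\phi_\epsilon^* u_\epsilon - u_0 = \phi_\epsilon^*(u_\epsilon - u_0) + (\phi_\epsilon^* - 1)u_0,
\end{align*}
divides by $\epsilon$, and takes $\epsilon \to 0$. The left side converges to $\theta_X u$ by definition, the second term on the right converges to $Xu_0$ by the usual Lie-derivative calculus, and since $\phi_\epsilon^*$ tends to the identity as an operator, the first term forces $(u_\epsilon - u_0)/\epsilon$ to converge — producing $\delta u_\epsilon$ — and gives the identity. Your approach instead dualizes: you pair $\phi_\epsilon^* u_\epsilon$ with a fixed test function $\alpha$, unpack the pullback of a distribution including the Jacobian factor $|\det D\phi_\epsilon^{-1}|$, and apply Leibniz to the bilinear pairing $u_\epsilon(g_\epsilon)$. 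The two computations are genuinely different in spirit: the paper's is a ``discrete product rule'' on the level of difference quotients and never needs to display Jacobians or integrate by parts, while yours makes fully explicit where the term $Xu_0$ comes from — namely from $-\operatorname{div}(\alpha X)$ via integration by parts with no boundary contribution — and therefore makes visible exactly how the ``supported away from the boundary'' hypothesis is used. The paper's route is shorter and works directly with distributional convergence; yours is more elementary and more illuminating about why a boundary layer would appear if the support hypothesis were dropped, which is a point you correctly connect to Theorem \ref{Hadamard}. One small caution in your write-up: the Leibniz step $\frac{d}{d\epsilon}|_0 u_\epsilon(g_\epsilon) = (\delta u_\epsilon)(\alpha) + u_0(\dot g)$ implicitly uses that $\delta u_\epsilon$ already exists as a distribution on $\mathrm{int}\,\Omega_0$ — which is fine, because Definition \ref{first variation} grants that for the $C^1$ family; the content of the lemma is the upgrade to $H^s(\Omega_0)$ up to the boundary, which your final paragraph supplies via $\theta_X u, Xu_0 \in H^s(\Omega_0)$. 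It would be worth flagging that dependence explicitly rather than leaving the impression that $\delta u_\epsilon$ is being produced by the Leibniz rule itself.
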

\begin{proof}
The lemma follows from writing
$$
\phi_\epsilon^* u_\epsilon - u_0 = \phi_\epsilon^* (u_\epsilon - u_0) + (\phi_\epsilon^* - 1)u_0,
$$
dividing both sides by $\epsilon$ and sending $\epsilon \to 0$.
\end{proof}
\begin{rema}
The formula in Lemma \ref{Lie derivative and first variation} is perfectly valid pointwise whenever $x \in \text{int}{\Omega_0}$. Both $\theta_X$ and $X$ are well defined operators on distributions supported near the boundary, so by setting $\delta u = \theta_X u - X u$, we obtain an extension of Definition \ref{first variation} for $s > 1$. From now on, we use $X$ to denote the differential operator acting on distributions on the fixed domain $\Omega_0$ and $\theta_X$ to denote the Lie derivative acting on distributions or differential forms on $\Omega_\epsilon$ which may also depend on $\epsilon$.
\end{rema}

\subsection{A general variational formula}\label{A general variational formula}
We now derive a variational formula using the weak formulation \eqref{weak formulation}. To obtain an integral equation on the fixed domain $\Omega_0$, we pull back the energy density and apply the change of variables formula to \eqref{weak formulation}:
\begin{align*}
\int_{\Omega_0} \phi_{\epsilon}^* e(u_\epsilon, {\phi_{\epsilon}^{-1}}^*v) \phi_{\epsilon}^* dV = \int_{\Omega_0} \phi_\epsilon^* f_\epsilon v \phi_{\epsilon}^* dV + \int_{\d \Omega_\epsilon} K_\epsilon u_\epsilon {\phi_{\epsilon}^{-1}}^*v dq_{\epsilon}.
\end{align*}
The pullback of the surface measure $dq_{\epsilon}$ in the last term on the right is more complicated, so for the time being, we leave it as is. We can rewrite this equation as
\begin{align}\label{pulled back weak formulation}
\int_{\Omega_0} e_\epsilon (\phi_{\epsilon}^*u_\epsilon, v) \phi_\epsilon^* dV = \int_{\Omega_0} \phi_\epsilon^* f_\epsilon v dV + \int_{\d \Omega_\epsilon} K_\epsilon u_\epsilon {\phi_{\epsilon}^{-1}}^*v dq_{\epsilon},
\end{align}
where
$$
e_\epsilon(u,v) = \phi_{\epsilon}^* (e({\phi_{\epsilon}^{-1}}^* u, {\phi_{\epsilon}^{-1}}^*v))
$$
is the conjugated energy density. While $e_\epsilon$ is a composition of operators, it is still of the form
$$
e_\epsilon = \sum_{|\alpha|, |\beta| \leq 2} c_{\alpha, \beta}(\epsilon,x,y) D_x^\alpha D_y^\beta,
$$
for some coefficients $c_{\alpha, \beta}$ depending smoothly on $x,y$ and in a $C^1$
manner on $\epsilon$. This will justify use of the product rule when computing $\epsilon$ derivatives in Lemma \ref{conjugated energy density} below. Differentiating \eqref{pulled back weak formulation} in the parameter $\epsilon$ and setting $\epsilon = 0$ yields
\begin{align}\label{divergence terms}
\begin{split}
\int_{\Omega_0} &( e(\theta_X u, v) + (\theta_X e)(u_0,v) ) dV + e(u_0,v)\theta_X dV\\
&= \int_{\Omega_0} v \theta_X f dV + v f \theta_X dV + \delta \int_{\partial \Omega_\epsilon}K_\epsilon u_\epsilon \phi_\epsilon^{-1 *}v \,dq_\epsilon,
\end{split}
\end{align}
where the quantity
$$
\theta_X e = \frac{d}{d\epsilon} \Big|_{\epsilon = 0}  e_\epsilon
$$
is defined analagously to the formula in Definition \ref{Lie derivative}.
\begin{lemm}\label{conjugated energy density}
For $u, v \in H^2(\Omega_0)$, differentiating the conjugated energy density $e_\epsilon$ yields
$$
(\theta_X e) (u,v) = X e(u,v) - e(Xu, v) - e(u,Xv).
$$
\end{lemm}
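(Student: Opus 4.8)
The plan is to prove the identity by expanding the conjugated energy density directly from its definition and using that the first variation (Lie derivative at $\epsilon=0$) of the pullback operators is exactly the Lie derivative $X$ acting as a first-order differential operator. Recall $e_\epsilon(u,v) = \phi_\epsilon^*\bigl(e(\phi_\epsilon^{-1*}u, \phi_\epsilon^{-1*}v)\bigr)$ and that $e(w,z) = \langle \nabla w, \nabla z\rangle - \lambda^2 wz$, so $e$ is a fixed bilinear differential operator, bilinear of order $(1,1)$ in the gradient part and order $(0,0)$ in the potential part. The point is that $\phi_\epsilon^*$ and $\phi_\epsilon^{-1*}$ are inverse families of diffeomorphism pullbacks with $\phi_0 = \Id$, so that $\frac{d}{d\epsilon}\big|_{\epsilon=0}\phi_\epsilon^* w = Xw$ and $\frac{d}{d\epsilon}\big|_{\epsilon=0}\phi_\epsilon^{-1*}w = -Xw$ for any fixed (time-independent) $w$; here $X$ is the vector field $\frac{d\phi_\epsilon}{d\epsilon}\big|_{\epsilon=0}$ acting as a directional derivative. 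The justification for applying the product (Leibniz) rule is exactly the structural remark already made in the text: $e_\epsilon = \sum_{|\alpha|,|\beta|\le 2} c_{\alpha,\beta}(\epsilon,x,y)D_x^\alpha D_y^\beta$ with coefficients smooth in $(x,y)$ and $C^1$ in $\epsilon$, so differentiating term by term at $\epsilon=0$ is legitimate on $u,v \in H^2(\Omega_0)$.

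First I would write, schematically, $e_\epsilon = \phi_\epsilon^* \circ e \circ (\phi_\epsilon^{-1*} \times \phi_\epsilon^{-1*})$ as a composition of three $\epsilon$-dependent pieces (the outer pullback, the fixed operator $e$, and the two inner pullbacks applied to the two arguments). Applying $\frac{d}{d\epsilon}\big|_{\epsilon=0}$ by the Leibniz rule produces three contributions: (i) differentiating the outer $\phi_\epsilon^*$ while freezing the rest at $\epsilon=0$ gives $X\bigl(e(u,v)\bigr)$, since at $\epsilon=0$ the inner pullbacks are the identity and $e_0 = e$; (ii) differentiating the first inner $\phi_\epsilon^{-1*}$ (freezing outer pullback and second argument) gives $e\bigl(-Xu, v\bigr) = -e(Xu,v)$, because $\frac{d}{d\epsilon}\big|_{\epsilon=0}\phi_\epsilon^{-1*}u = -Xu$ and the outer $\phi_0^* = \Id$; (iii) symmetrically, differentiating the second inner pullback gives $-e(u,Xv)$. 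Summing, $(\theta_X e)(u,v) = Xe(u,v) - e(Xu,v) - e(u,Xv)$, which is precisely the claimed formula.

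The one technical point requiring care — and the step I expect to be the main obstacle — is the bookkeeping in the Leibniz expansion, specifically making rigorous that differentiating a composition $A_\epsilon \circ B_\epsilon$ of operator-valued $C^1$ families at $\epsilon=0$ yields $\dot{A}_0 B_0 + A_0 \dot{B}_0$ when applied to the fixed element $u,v$. Because each $\phi_\epsilon^{\pm*}$ is merely $C^1$ in $\epsilon$ (the family of diffeomorphisms is $C^1$) and acts on $H^2$ functions, one must check that the relevant difference quotients converge in the appropriate topology; this is exactly why the author inserted the remark that $e_\epsilon$ has the explicit coefficient form above. Concretely, I would expand $e_\epsilon(u,v) - e_0(u,v) = \sum_{\alpha,\beta}\bigl(c_{\alpha,\beta}(\epsilon,\cdot) - c_{\alpha,\beta}(0,\cdot)\bigr)D_x^\alpha D_y^\beta(u\otimes v)$, divide by $\epsilon$, let $\epsilon\to0$ using $C^1$-dependence of the coefficients, and then identify the resulting limit with $Xe(u,v) - e(Xu,v) - e(u,Xv)$ by tracking how $X$ enters through the Jacobian of $\phi_\epsilon$ and its inverse (the chain rule for pullbacks: $\phi_\epsilon^*(\nabla w) $ picks up $(D\phi_\epsilon)^{-T}$, and $\frac{d}{d\epsilon}\big|_{\epsilon=0}$ of that factor contributes the commutator terms $-e(Xu,v)-e(u,Xv)$ together with $Xe(u,v)$). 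Once the Leibniz step is justified, the identity is a short and purely algebraic computation; no genuinely hard analysis remains.
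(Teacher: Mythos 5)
Your argument is essentially the paper's: the paper groups the two inner pullbacks into a single family $w_\epsilon = e(\phi_\epsilon^{-1*}u, \phi_\epsilon^{-1*}v)$ and applies Lemma~\ref{Lie derivative and first variation} (which is exactly the outer Leibniz step, giving $Xw_0 + \delta w_\epsilon$), then computes $\delta w_\epsilon = -e(Xu,v)-e(u,Xv)$ by commuting $\frac{d}{d\epsilon}$ with $\nabla$ and justifies this via the commutator $[\frac{d}{d\epsilon},\nabla_x]\phi_\epsilon^{-1*}u$ being a continuous family in $L^2_x$. Your three-factor Leibniz expansion plus appeal to the explicit coefficient form $c_{\alpha,\beta}(\epsilon,x,y)$ is the same computation with the same justification, merely packaged in one step instead of two.
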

\begin{proof}
Consider the family of distributions $w_\epsilon = e({\phi_{\epsilon}^{-1}}^* u, {\phi_{\epsilon}^{-1}}^*v)$. Recalling that
$$
e_\epsilon(u,v) = \phi_{\epsilon}^* w_\epsilon,
$$
we see by Lemma \ref{Lie derivative and first variation} that
$$
\frac{d}{d\epsilon} \Big|_{\epsilon = 0}  \phi_{\epsilon}^* w_\epsilon = X w_0 + \delta w_\epsilon.
$$
The first term is precisely $X e(u,v)$ and the second term $\delta w_\epsilon$ is easily calculated by commuting the $\epsilon$ and $x$ derivatives:
\begin{align*}
\delta w_\epsilon &= \frac{d}{d\epsilon} \Big|_{\epsilon = 0} \langle \nabla {\phi_{\epsilon}^{-1}}^* u, \nabla {\phi_{\epsilon}^{-1}}^* v \rangle - \lambda^2 {\phi_{\epsilon}^{-1}}^* u {\phi_{\epsilon}^{-1}}^* v\\
&= \langle \nabla (-X u), \nabla v \rangle  + \langle \nabla u, \nabla (-X v) \rangle - \lambda^2 (-Xu) v - \lambda^2 u (-Xv)\\
&= -e(Xu,v) - e(u,Xv).
\end{align*}
Commuting derivatives is always valid in the sense of distributions. In order to also restrict to $\epsilon = 0$, note that $[\frac{d}{d \epsilon}, \nabla_x] {\phi_{\epsilon}^{-1}}^* u = 0$ in $L^2$ and this quantity only involves second derivatives of $u$, first order derivates of $\phi_{\epsilon}^{-1}$ in $\epsilon$, and $x$ derivatives of $\phi_{\epsilon}^{-1}$. Here, $[\cdot,\cdot]$ denotes the commutator of two operators. As $u \in H^2$ and $\phi_{\epsilon}$ is $C^1$ with respect to $\epsilon$, $[\frac{d}{d \epsilon}, \nabla_x] {\phi_{\epsilon}^{-1}}^* u$ is a continuous family (in $\epsilon$) of $L_x^2$ functions. Letting $\epsilon \to 0$ then implies $[\delta, \nabla] {\phi_{\epsilon}^{-1}}^* u = 0$.
\end{proof}
Returning to the variation of \eqref{pulled back weak formulation}, the Lie derivative of the volume form in equation \eqref{divergence terms} gives the divergence of $X$, which we would like to convert to a boundary integral, since $X$ is only defined in a tubular neighborhood of $\d \Omega_0$. Using again the formula $\theta_X = \delta + X$ and applying the divergence theorem to equation \eqref{divergence terms} above gives
\begin{align}\label{Variation without last term}
\begin{split}
\delta \int_{\Omega_0} e_\epsilon (\phi_{\epsilon}^*u_\epsilon, v) \phi_\epsilon^* dV &= \int_{\Omega_0} e(\dot{u},v) - e(u_0,Xv) dV + \int_{\d \Omega_0} e(u_0,v) X_\nu dq_0\\
&= \int_{\Omega_0} v \dot{f} - (Xv)f dV + \int_{\d \Omega_0} vf X_\nu dq_0 + \delta \int_{\d \Omega_\epsilon} K_\epsilon u_\epsilon {\phi_{\epsilon}^{-1}}^* v dq_\epsilon.
\end{split}
\end{align}
Here, $X_\nu = \langle X , \nu \rangle$ is the normal component of $X$. For the last term in \eqref{Variation without last term}, we can assume the perturbation is in the normal direction and parametrize the boundary by $\partial {\Omega_0} \ni x \mapsto x + \rho_\epsilon(x) \nu_x \in \partial \Omega_\epsilon$, so that
\begin{align}\label{boundary integral with measure}
\int_{\partial \Omega_\epsilon}K_\epsilon u_\epsilon \phi_\epsilon^{-1 *}v \,dq_\epsilon = \int_{\partial \Omega_0} K_\epsilon(x + \rho_\epsilon(x)\nu_x) u_\epsilon(x + \rho_\epsilon(x)\nu_x) v(x) \,dq_\epsilon.
\end{align}
We now recall a basic result from differential geometry:
\begin{lemm} \label{variation of surface measure}
The variation of surface measure is given by
$$
\delta dq_{\epsilon} = \kappa \dot{ \rho} dq_0,
$$
where $\kappa$ is the curvature of $\d \Omega_0$.
\end{lemm}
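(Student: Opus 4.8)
The plan is to reduce the statement to a local computation in a boundary-adapted parametrization and then differentiate the induced length element in $\epsilon$. Parametrize $\d\Omega_0$ by arc length $s \mapsto \gamma_0(s)$, so that $|\gamma_0'(s)| = 1$, and let $\nu_{\gamma_0(s)}$ be the outward unit normal. The deformed boundary is then parametrized by $\gamma_\epsilon(s) = \gamma_0(s) + \rho_\epsilon(\gamma_0(s))\,\nu_{\gamma_0(s)}$, where I write $\rho_\epsilon$ for the normal displacement function on $\d\Omega_0$ (note this is the pullback of the boundary displacement; since we only need the first variation at $\epsilon = 0$, the distinction between parametrizing by $\d\Omega_0$ or $\d\Omega_\epsilon$ contributes nothing to $\delta dq_\epsilon$). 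The surface measure on $\d\Omega_\epsilon$ is $dq_\epsilon = |\gamma_\epsilon'(s)|\,ds$, so the task is to compute $\frac{d}{d\epsilon}\big|_{\epsilon=0} |\gamma_\epsilon'(s)|$.

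First I would differentiate $\gamma_\epsilon$ in $s$, using the Frenet equation $\nu_{\gamma_0(s)}' = -\kappa(s)\,\gamma_0'(s)$ (with the sign convention appropriate to the outward normal of a convex-type curve, which is the convention in force here), to obtain
\begin{align*}
\gamma_\epsilon'(s) = \big(1 - \rho_\epsilon(\gamma_0(s))\,\kappa(s)\big)\gamma_0'(s) + \frac{d}{ds}\big[\rho_\epsilon(\gamma_0(s))\big]\,\nu_{\gamma_0(s)}.
\end{align*}
Because $\gamma_0'(s)$ and $\nu_{\gamma_0(s)}$ are orthonormal, $|\gamma_\epsilon'(s)|^2 = (1 - \rho_\epsilon\kappa)^2 + \big(\tfrac{d}{ds}\rho_\epsilon\big)^2$. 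Now differentiate at $\epsilon = 0$: since $\rho_0 \equiv 0$, the tangential-derivative term enters quadratically and so contributes nothing to the first variation, while the first term gives $\frac{d}{d\epsilon}\big|_{\epsilon=0}(1-\rho_\epsilon\kappa)^2 = -2\kappa\,\dot\rho$. Hence $\frac{d}{d\epsilon}\big|_{\epsilon=0}|\gamma_\epsilon'(s)|^2 = -2\kappa\dot\rho$, and since $|\gamma_0'(s)| = 1$ this yields $\frac{d}{d\epsilon}\big|_{\epsilon=0}|\gamma_\epsilon'(s)| = -\kappa\dot\rho$, i.e. $\delta dq_\epsilon = -\kappa\dot\rho\,dq_0$. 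A sign remark: with the convention that $\kappa > 0$ for a domain that is locally convex and $\rho_\epsilon > 0$ an outward push (which enlarges the domain), the boundary length should \emph{decrease} to first order where the curvature is positive, so the minus sign is correct; the statement as displayed, $\delta dq_\epsilon = \kappa\dot\rho\,dq_0$, corresponds to the opposite sign convention for $\kappa$ (curvature vector pointing outward), and I would simply fix the convention at the start so the formula reads as stated.

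I do not expect any genuine obstacle here — this is a standard first-variation-of-arclength computation, essentially the content of the first variation formula for the length functional restricted to normal variations. The only points requiring a word of care are: (i) the regularity, namely that $\rho_\epsilon$ is $C^1$ in $\epsilon$ and smooth in the spatial variable, so differentiating under the parametrization is legitimate and the resulting $\dot\rho$ is a well-defined smooth function on $\d\Omega_0$; and (ii) bookkeeping the sign/orientation convention for $\kappa$ consistently with the rest of the paper, since this sign propagates into Hadamard's formula (Theorem \ref{Hadamard}) through the $K_0\kappa\dot\rho$ term. Neither is substantive; the heart of the argument is the orthonormal decomposition of $\gamma_\epsilon'$ together with the observation that the tangential derivative of $\rho_\epsilon$ appears only at second order.
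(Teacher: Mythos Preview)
Your approach---parametrize by arc length, differentiate the induced line element using the Frenet relation for $\nu'$---is exactly the standard route and is correct in spirit; the paper itself does not give a proof but only cites \cite{CoMi11}, so in fact you supply more detail than the paper.

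However, there is a genuine sign slip, and your sanity check compounds it rather than catching it. With $\nu$ the \emph{outward} unit normal of a convex boundary parametrized counterclockwise by arc length, the Frenet relation is $\nu' = +\kappa\,\gamma_0'$ (not $-\kappa\,\gamma_0'$): for the circle $\gamma_0(s) = (r\cos(s/r), r\sin(s/r))$ one has $\nu = (\cos(s/r),\sin(s/r))$ and $\nu' = \tfrac{1}{r}\gamma_0' = \kappa\,\gamma_0'$. With this sign you get $|\gamma_\epsilon'|^2 = (1+\rho_\epsilon\kappa)^2 + (\tfrac{d}{ds}\rho_\epsilon)^2$ and hence $\delta dq_\epsilon = +\kappa\dot\rho\,dq_0$, matching the statement as written. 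Your geometric check is backwards: pushing a convex curve \emph{outward} ($\dot\rho>0$) makes the boundary \emph{longer}, not shorter---a circle of radius $r$ pushed out by $\dot\rho$ has perimeter $2\pi(r+\dot\rho)$, and indeed $\int_{\d\Omega_0}\kappa\dot\rho\,dq_0 = \tfrac{1}{r}\cdot\dot\rho\cdot 2\pi r = 2\pi\dot\rho>0$. So the displayed formula is correct under the natural convention ($\kappa>0$ for a convex domain, $\nu$ outward), and no sign adjustment is needed; just fix the Frenet relation and drop the closing remark about the minus sign.
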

\begin{proof}
A proof using normal coordinates can be found on page 6 of \cite{CoMi11}.
\end{proof}
Hence, differentiating the entire boundary integral \eqref{boundary integral with measure}, we obtain
\begin{align}\label{boundary integral}
\frac{d}{d\epsilon} &\Big |_{\epsilon = 0} \int_{\partial \Omega_0} K_\epsilon(x + \rho_\epsilon(x)\nu_x) u_\epsilon(x + \rho_\epsilon(x) \nu_x) v(x) \,dq_\epsilon = \int_{\partial \Omega_0} \dot{K} u_0 v + K_0 \theta_X u v + K_0 u_0 v \dot{\rho} \kappa dq_0.
\end{align}
Actually, there are two ways to define $\dot{K}$ since it is currently ambiguous as to how to differentiate in $x$ on the hypersurface $\d \Omega_\epsilon$. The first way involves extending $K_\epsilon$ radially to a function defined on a tubular neighborhood of $\d \Omega_\epsilon$, so that we may differentiate in $x$ on an open subset of $\R^2$. The second way is to define $\dot{K}_\epsilon(x) = d/d\epsilon|_{\epsilon = 0} K_\epsilon(x + \rho_\epsilon \nu_x)$. While these two definitions differ pointwise, the integral formula remains the same and we adopt the second definition as it appears more naturally in the proof.

\subsection{Variation of Green's Kernel}
Combining equations \eqref{Variation without last term} and \eqref{boundary integral}, we obtain the variational formula
\begin{align*}
\int_{\Omega_0} & \langle\nabla \dot{u} , \nabla v \rangle -  \lambda^2 \dot{u} v dV + \int_{\partial \Omega_0} (\langle \nabla u_0 , \nabla v \rangle - \lambda^2 u_0 v) X_\nu dq_0\\
&= \int_{\Omega_0}  \langle\nabla u_0 , \nabla(Xv) \rangle - \lambda^2 u_0 Xv - fXv + \dot{f} v dV\\
&+ \int_{\partial \Omega_0} f v X_\nu + \dot{K} u_0 v + K_0 \dot{u} v + K_0 (X u_0)v  + K_0 u_0 v \dot{\rho} \kappa \,dq_0.
\end{align*}
Recall that $\phi_\epsilon(x) = x + \rho_\epsilon(x) \nu_x$ for $x \in \partial \Omega_0$, so that $X_\nu = \dot{\rho}$. Integrating by parts and collecting boundary terms, we see that
\begin{align*}
\int_{\Omega_0} \dot{u}  (- \Delta - \lambda^2)  v &
- (Xv) (-\Delta - \lambda^2)u_0 + fXv - \dot{f} v dV\\
= \int_{\partial \Omega_0}& - \dot{u}\nabla^\perp v - (\langle \nabla u_0, \nabla v \rangle - \lambda^2 u_0 v)\dot{\rho} + (Xv) \nabla^\perp u_0 \\
&+ fv \dot{\rho} + \dot{K} u_0v + K_0 \dot{u} v + K_0(Xu_0)v + K_0 u_0 v \dot{\rho} \kappa dq_0.
\end{align*}
If $u$ and $v$ satisfy the PDE \eqref{PDE} with Robin boundary conditions, we have
\begin{align}\label{satisfy robin bdy conditions}
\begin{split}
\int_{\Omega_0} \dot{u}  f_0 - \dot{f}& v dV\\
= \int_{\partial \Omega_0}& - \dot{u}K_0 v - (\langle \nabla u_0, \nabla v \rangle - \lambda^2 u_0 v)\dot{\rho} + (Xv) K_0 u_0 \\
&+ fv \dot{\rho} + \dot{K} u_0 v + K_0 \dot{u} v + K_0(X u_0)v + K_0u_0v \dot{\rho} \kappa dq_0\\
&= \int_{\partial \Omega_0} - (\langle \nabla u_0, \nabla v \rangle - \lambda^2 u_0 v)\dot{\rho} + fv \dot{\rho} + 2 K_0^2 u_0 v \dot{\rho} + \dot{K}u_0 v + K_0u_0v \dot{\rho} \kappa dq_0.
\end{split}
\end{align}
Noting that
\begin{align*}
\langle \nabla u_0, \nabla v \rangle = \langle \nabla^T u_0, \nabla^T v \rangle + \langle \nabla^\perp u_0, \nabla^\perp v \rangle = \langle \nabla^T u_0, \nabla^T v \rangle + K_0^2 u_0 v,
\end{align*}
we can get rid of one of the $K_0^2u_0 v\dot{\rho}$ terms in \eqref{satisfy robin bdy conditions} in exchange for only using tangential derivatives.\\
\\
To prove Theorem \ref{Hadamard}, we now fix $x_0 \in \text{int} (\Omega_0)$ and denote by $G_\epsilon$ be the Greens function on $\Omega_\epsilon$. Formally, the Green's function is the Schwartz kernel of the resolvent $(-\Delta_\epsilon - \lambda^2)^{-1}$. Setting $u_\epsilon(x) = G_\epsilon(\lambda, x, y)$, $v(x) = G_0(\lambda, x_0, x)$ and $f_\epsilon(x) = \delta_{x_0}(x)$, we obtain
\begin{align*}
\delta G_\epsilon(\lambda,x_0, y) = & \\
\int_{\partial \Omega_0} - \langle \nabla_2^T G_0(\lambda, x_0, q), \nabla_1^T G_0 (\lambda, q, y)\rangle  \dot{\rho} + (\lambda^2 \dot{\rho} & + K_0^2 \dot{\rho} + K_0 \kappa \dot{\rho} + \dot{K}) G_0(\lambda, x_0, q) G_0(\lambda, q, y) dq,
\end{align*}
which is precisely Theorem \ref{Hadamard} with $x$ replaced by $x_0$.  As $(\Delta_\epsilon - \lambda^2) \in \Psi^2(\Omega_\epsilon)$ is elliptic for $\Im \lambda$ positive, $G_\epsilon$ is a Lagrangian distribution with principal symbol in $S_{1,0}^{-2}$. Hence, $G_\epsilon$ is a family of distributions and $\delta G_\epsilon$ in particular, is a distribution of order $-1$ with wavefront set conormal to the diagonal $\{x = y\}$. As $x,y \in \text{int}(\Omega_0)$ and $q \in \d \Omega_0$ in Theorem \ref{Hadamard}, the points $(x,q)$ and $(q,y)$ are away from the diagonal, where the distribution is smooth. Hence, the tangential derivatives do not affect the smoothness or integrability. The distribution $\delta G_\epsilon$ can actually be extended up to the boundary using the method of layer potentials, although this is not needed in the remainder of the paper.

\subsection{The wave trace and eigenvalues}\label{the wave trace and eigenvalues}
We now want to find a formula for the variation of the distributional trace of the even wave propagator, $\cos(t \sqrt{- \Delta_\epsilon})$, in terms of that of the Green's kernel. Recall that the wave propagator $e^{i t \sqrt{-\Delta_\epsilon}}$ has a distributional trace in the sense that
\begin{align*}
\int_{\R} e^{it\sqrt{-\Delta_\epsilon}} \phi(t)\,dt
\end{align*}
is trace class for any Schwartz function $\phi$. Its trace is
\begin{align}\label{Weyl law wave trace}
\sum_j \int_{\R} e^{it \lambda_j(\epsilon)} \phi(t)\,dt,
\end{align}
where $(\lambda_j^2(\epsilon))_{j = 1}^\infty$ are the eigenvalues of $\Delta_{\epsilon}$. The sum in \eqref{Weyl law wave trace} can be seen to be convergent via integration by parts combined with Weyl's law on the asymptotic distribution of eigenvalues. While Weyl's law is usually stated for Dirichlet or Neumann eigenvalues, the Robin and Neumman asymptotics actually agree up to leading order due to the fact that the boundary operators $\d_\nu - K$ and $\d_\nu$ of the Robin and Neumann Laplacians respectively have the same principal symbol. For example, see \cite{Zayed2004} or \cite{Irvii16}. Taking real and imaginary parts, the analagous trace formulas hold for the even and odd wave kernels, which we denote by
\begin{align*}
E_R(t,x,y) = \cos t \sqrt{-\Delta_\epsilon} \quad \text{and} \quad S_R(t,x,y) = \frac{\sin t \sqrt{-\Delta_\epsilon}}{\sqrt{- \Delta_\epsilon}},
\end{align*}
respectively. The subscript $R$ here is to denote the Robin boundary conditions. In this section, we shall prove:
\begin{theo}\label{wavetrace}
The variation of the even wave trace is
$$
\delta \text{Tr} \cos t \sqrt{-\Delta_\epsilon} = \int_{\d \Omega_0} L_R^b(t,q,q)\dot{\rho} + \frac{t}{2}S_R(t,q,q)\dot{K}\,dq,
$$
where we have defined
$$
L_R^b(t,q,q') = \frac{t}{2} (-\nabla_1^T \nabla_2^T  - \Delta_2  + K_0^2 + K_0 \kappa) S_R.
$$
\end{theo}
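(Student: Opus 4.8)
The plan is to reduce $\delta\,\text{Tr}\cos t\sqrt{-\Delta_\epsilon}$ to the spectral representation of the wave kernel together with the first variation of a single eigenvalue, the latter being extracted from Theorem~\ref{Hadamard} by a residue computation, so that the only nontrivial input is the Hadamard formula already proved. First I would note that, as a distribution in $t$, $\text{Tr}\cos t\sqrt{-\Delta_\epsilon}=\sum_j\cos\big(t\lambda_j(\epsilon)\big)$, the sum converging after pairing with a Schwartz function by Weyl's law. Differentiating term by term in $\epsilon$ at $\epsilon=0$ and using $\dot\lambda_j=\dot{\lambda_j^2}/(2\lambda_j)$ yields
\begin{align*}
\delta\,\text{Tr}\cos t\sqrt{-\Delta_\epsilon}=-\sum_j\frac{t}{2}\,\frac{\sin t\lambda_j}{\lambda_j}\,\dot{\lambda_j^2}.
\end{align*}

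Next I would establish the eigenvalue variational formula: for an $L^2(\Omega_0)$-normalized Robin eigenfunction $\phi_j$ with $-\Delta\phi_j=\lambda_j^2\phi_j$,
\begin{align*}
\dot{\lambda_j^2}=\int_{\partial\Omega_0}\Big[\big(|\nabla^T\phi_j|^2-(\lambda_j^2+K_0^2+K_0\kappa)\phi_j^2\big)\dot\rho-\dot K\,\phi_j^2\Big]\,dq.
\end{align*}
This comes out of Theorem~\ref{Hadamard} by comparing the double poles at $\lambda^2=\lambda_j^2$. On the one hand $G_\epsilon(\lambda,x,y)=\sum_k\phi_k^\epsilon(x)\phi_k^\epsilon(y)/(\lambda_k^2(\epsilon)-\lambda^2)$, so the double-pole part of $\delta G_\epsilon$ near $\lambda^2=\lambda_j^2$ is $-\dot{\lambda_j^2}\,\phi_j(x)\phi_j(y)/(\lambda_j^2-\lambda^2)^2$; on the other hand, near $\lambda^2=\lambda_j^2$ one has $G_0(\lambda,x,q)G_0(\lambda,q,y)\sim\phi_j(x)\phi_j(q)^2\phi_j(y)/(\lambda_j^2-\lambda^2)^2$ and $\langle\nabla_2^TG_0(\lambda,x,q),\nabla_1^TG_0(\lambda,q,y)\rangle\sim\phi_j(x)\phi_j(y)\,|\nabla^T\phi_j(q)|^2/(\lambda_j^2-\lambda^2)^2$, so equating the coefficients of $\phi_j(x)\phi_j(y)/(\lambda_j^2-\lambda^2)^2$ in Theorem~\ref{Hadamard} gives the formula. (This is essentially Theorem~\ref{simple eigenvalue}; when $\lambda_j^2$ is degenerate one replaces $\phi_j^2$ and $|\nabla^T\phi_j|^2$ by the diagonal of the spectral projector onto the $\lambda_j^2$-eigenspace and of $\langle\nabla_1^T\nabla_2^T(\cdot)\rangle$ applied to it, both basis-independent, and nothing else changes.)

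Substituting this into the first display, interchanging the $j$-sum with $\int_{\partial\Omega_0}dq$, and recalling $S_R(t,x,y)=\sum_j\tfrac{\sin t\lambda_j}{\lambda_j}\phi_j(x)\phi_j(y)$, I would identify the three $j$-sums that appear as boundary diagonal values of $S_R$ and of differential operators applied to it — the crucial point being that the spectral parameter $\lambda_j^2$ becomes the boundary Laplacian through $-\Delta\phi_j=\lambda_j^2\phi_j$:
\begin{align*}
\sum_j\tfrac{\sin t\lambda_j}{\lambda_j}\,\phi_j(q)^2&=S_R(t,q,q),\\
\sum_j\tfrac{\sin t\lambda_j}{\lambda_j}\,|\nabla^T\phi_j(q)|^2&=\langle\nabla_1^T\nabla_2^TS_R\rangle(t,q,q),\\
\sum_j\tfrac{\sin t\lambda_j}{\lambda_j}\,\lambda_j^2\,\phi_j(q)^2&=(-\Delta_2S_R)(t,q,q).
\end{align*}
Collecting the coefficient of $\dot\rho$ gives exactly $\tfrac t2\big(-\nabla_1^T\nabla_2^T-\Delta_2+K_0^2+K_0\kappa\big)S_R(t,q,q)=L_R^b(t,q,q)$, and the coefficient of $\dot K$ gives $\tfrac t2 S_R(t,q,q)$, which is the asserted identity. (An equivalent route is to start from $\text{Tr}(-\Delta_\epsilon-\lambda^2)^{-2}$, which is trace class in dimension two, apply Theorem~\ref{Hadamard} and the composition law for Green's functions, and invert the Fourier--Laplace transform in $t$; this uses the same ingredients but also needs a careful accounting of the boundary terms coming from deforming the domain of integration in the trace.)

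I expect the main difficulty to be the analytic bookkeeping rather than any single estimate. Two points need care: differentiating the distributional trace term by term when eigenvalue branches cross or the family is merely $C^1$ — handled by grouping eigenvalues into clusters, differentiating each cluster's contribution via the smoothly varying spectral projector, and noting that only basis-independent projector-diagonal quantities survive, exactly as in \cite{HeZe12} — and the interchange of the $j$-sum with the $t$-pairing and with the $q$-integral, which follows from Weyl's law together with polynomial bounds $\|\phi_j\|_{L^2(\partial\Omega_0)}\lesssim\lambda_j$ and $\|\nabla^T\phi_j\|_{L^2(\partial\Omega_0)}\lesssim\lambda_j^2$ from elliptic regularity and the Sobolev trace theorem. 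Finally, although $L_R^b(t,q,q)$ is singular at $t=0$ on the boundary, the asserted equality is an identity of distributions in $t$ produced by the above absolutely convergent expansions, so this causes no trouble.
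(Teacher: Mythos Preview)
Your proposal is correct and follows essentially the same route as the paper: both derive the eigenvalue variation from the Hadamard formula by a residue/double-pole computation and then sum over the spectrum, identifying the resulting eigenfunction sums with boundary-diagonal values of $S_R$ and its derivatives. The main organizational difference is that the paper packages the multiplicity issue and the term-by-term differentiation into a single contour-integral identity à la Kato, writing $\delta\sum_k g(\lambda_{j,k}^2(\epsilon)) = \delta\,\text{Tr}\,T_{g,\epsilon}$ with $T_{g,\epsilon} = \frac{-1}{2\pi i}\int_\gamma g(z)\widetilde R_\epsilon(z)\,dz$ and then inserting Theorem~\ref{Hadamard}; this makes the $C^1$-dependence on $\epsilon$ of cluster sums manifest from the outset and produces an extra boundary term (from the moving domain in the trace) that visibly cancels, whereas in your approach that cancellation is absorbed into the eigenvalue variation formula itself.
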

Here, $\nabla_i^T$ is again the tangential derivative in the $i$th spacial variable and $\Delta_2$ is the Euclidean Laplacian in the second spacial variable. The kernels are first differentiated in the interior using an extension of the tangential vector field and then restricted to the diagonal of the boundary. We will also prove:
\begin{theo}\label{simple eigenvalue}
If $\lambda_j^2(0)$ is a simple eigenvalue associated to the $L^2$ normalized eigenfunction $\Psi_j$, then
\begin{align*}
\delta \lambda^2_j(\epsilon) = \int_{\d \Omega_0} |\nabla^T \Psi_j|^2 \dot{\rho}\,dq - \int_{\d \Omega_0} |\Psi_j|^2(\lambda_j^2 \dot{\rho} + K_0^2 \dot{\rho} + \dot{K} + K_0 \kappa \dot{\rho})\,dq.
\end{align*}
\end{theo}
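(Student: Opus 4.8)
The plan is to run the variational computation of Section~\ref{A general variational formula} a second time, now on the homogeneous problem, the single genuinely new feature being that the spectral parameter is the moving eigenvalue $\lambda_j^2(\epsilon)$ and the eigenfunction is $L^2$-normalized. As a preliminary, since $\lambda_j^2(0)$ is simple I would first transfer the family $\Delta_\epsilon$ on $\Omega_\epsilon$ to a $C^1$ family of self-adjoint operators on the fixed space $L^2(\Omega_0)$ (conjugating by $\phi_\epsilon$ and the associated Jacobian factor, so as to fix both the domain and the $L^2$ pairing), and invoke regular perturbation theory for simple eigenvalues: this yields a $C^1$ choice of $\lambda_j^2(\epsilon)$ and of an $L^2(\Omega_\epsilon)$-normalized eigenfunction $\Psi_j^\epsilon$, with $\phi_\epsilon^*\Psi_j^\epsilon$ a $C^1$ curve of functions on $\Omega_0$, smooth up to the boundary by elliptic regularity for the Robin problem. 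In particular $\dot\lambda_j^2$, $\theta_X\Psi_j$ and $\dot\Psi_j=\theta_X\Psi_j-X\Psi_j$ all make sense by Lemma~\ref{Lie derivative and first variation}, and $\Psi_j:=\Psi_j^0\in C^\infty(\overline{\Omega_0})$ so that its trace and tangential derivatives on $\partial\Omega_0$ are classical and the boundary integrals in the statement converge.

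Next I would apply the weak formulation~\eqref{weak formulation} with $u_\epsilon=\Psi_j^\epsilon$, with test function $v=\Psi_j$ (the fixed unperturbed eigenfunction, which also solves the Robin problem on $\Omega_0$), with $f_\epsilon\equiv 0$, and with spectral parameter $\lambda^2=\lambda_j^2(\epsilon)$. Pulling back to $\Omega_0$ as in~\eqref{pulled back weak formulation} and differentiating at $\epsilon=0$ exactly as in the derivation of~\eqref{divergence terms}--\eqref{satisfy robin bdy conditions} --- Lemma~\ref{conjugated energy density} for $\theta_X e$, Lemma~\ref{variation of surface measure} for $\delta\,dq_\epsilon$, the divergence theorem to convert $\theta_X\,dV$ into a boundary integral, and $\theta_X=\delta+X$ throughout --- the computation is identical to the Green's kernel case except that $\delta$ now additionally falls on the coefficient $-\lambda^2(\epsilon)$ in the conjugated energy density. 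Since $e_\epsilon$ contains the term $-\lambda^2(\epsilon)\,(\phi_\epsilon^*\Psi_j^\epsilon)\,\Psi_j$, this produces one extra summand, $-\dot\lambda_j^2\int_{\Omega_0}|\Psi_j|^2\,dV=-\dot\lambda_j^2$, on the left-hand side, by the normalization.

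I would then collect terms as in the passage to~\eqref{satisfy robin bdy conditions}, with $f\equiv 0$: after integration by parts the interior integrand $\dot\Psi_j(-\Delta-\lambda_j^2)\Psi_j-(X\Psi_j)(-\Delta-\lambda_j^2)\Psi_j$ vanishes because $\Psi_j$ is an eigenfunction, the two boundary contributions $K_0\dot\Psi_j\Psi_j$ produced in the process cancel against each other (so that no uncontrolled first-variation-of-eigenfunction term survives), and $2K_0(X\Psi_j)\Psi_j=K_0\,X(|\Psi_j|^2)=2K_0^2|\Psi_j|^2\dot\rho$ on $\partial\Omega_0$ by $X=\dot\rho\,\partial_\nu$ there together with $\partial_\nu\Psi_j=K_0\Psi_j$. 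What is left is
\[
-\dot\lambda_j^2=\int_{\partial\Omega_0}\Big[-\big(\langle\nabla\Psi_j,\nabla\Psi_j\rangle-\lambda_j^2|\Psi_j|^2\big)\dot\rho+2K_0^2|\Psi_j|^2\dot\rho+\dot K|\Psi_j|^2+K_0\kappa|\Psi_j|^2\dot\rho\Big]\,dq,
\]
and substituting $\langle\nabla\Psi_j,\nabla\Psi_j\rangle=|\nabla^T\Psi_j|^2+|\nabla^\perp\Psi_j|^2=|\nabla^T\Psi_j|^2+K_0^2|\Psi_j|^2$ on $\partial\Omega_0$ and rearranging gives precisely the asserted formula.

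The routine but delicate part is the preliminary step together with checking that $\dot\lambda_j^2$ comes out with coefficient exactly $1$ --- equivalently, that every $\dot\Psi_j$ (or $\theta_X\Psi_j$) contribution either cancels in a pair or is annihilated by $(-\Delta-\lambda_j^2)\Psi_j=0$. This is structurally forced by self-adjointness of $-\Delta_0$ with Robin data, but it has to be made rigorous with only $C^1$ dependence in $\epsilon$, which is exactly why the Lie-derivative formalism of Lemmas~\ref{Lie derivative and first variation}, \ref{conjugated energy density} and~\ref{variation of surface measure} was set up; the point is that these lemmas apply verbatim with $u_\epsilon$ the moving eigenfunction in place of a Green's kernel. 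Alternatively, one could differentiate the constrained Rayleigh quotient $\lambda_j^2(\epsilon)=\int_{\Omega_\epsilon}|\nabla\Psi_j^\epsilon|^2\,dV-\int_{\partial\Omega_\epsilon}K_\epsilon|\Psi_j^\epsilon|^2\,dq_\epsilon$ directly after pulling back to $\Omega_0$, which makes the appearance of $\dot\lambda_j^2$ transparent and reduces everything to the same boundary bookkeeping.
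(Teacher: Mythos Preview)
Your argument is correct, and the bookkeeping in the displayed identity matches equation~\eqref{satisfy robin bdy conditions} with $f\equiv 0$, $u_0=v=\Psi_j$, plus the extra $-\dot\lambda_j^2$ on the left coming from differentiating the moving spectral parameter; the final substitution $|\nabla\Psi_j|^2=|\nabla^T\Psi_j|^2+K_0^2|\Psi_j|^2$ then gives the stated formula.

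However, the paper does not proceed this way. Instead of re-running the weak-formulation computation with a moving eigenfunction, it first establishes a general identity~\eqref{Variational formula for g} for $\delta\sum_k g(\lambda_{j,k}^2(\epsilon))$ with $g$ holomorphic near $\lambda_j^2$, by writing the spectral sum as a contour integral $\frac{-1}{2\pi i}\int_\gamma g(z)\,\text{Tr}\,\widetilde R_\epsilon(z)\,dz$ around a small circle $\gamma$ isolating $\lambda_j^2$, and then feeding in the already-proven Hadamard formula (Theorem~\ref{Hadamard}) for $\delta\widetilde G_\epsilon$. The residues at the double pole of $\widetilde G_0\widetilde G_0$ produce the factor $g'(\lambda_j^2)$, and Theorem~\ref{simple eigenvalue} drops out by setting $g(z)=z$ (so $g'\equiv 1$) in the simple case. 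The advantage of the paper's route is uniformity: it handles arbitrary multiplicity without ever tracking $\dot\Psi_j$ or invoking regular perturbation theory for the eigenfunction, since the total projector $T_{g,\epsilon}$ is automatically $C^1$ via the resolvent. Your route is more elementary and self-contained for the simple case, but its correctness hinges on the preliminary step (conjugating to a fixed space and applying analytic perturbation theory to extract a $C^1$ eigenpair), which the Kato approach bypasses entirely.
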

Both theorems are proved together by the same method:
\begin{proof}
Our derivation of the wave trace variation is based on Kato's variational formulas for sums of eigenvalues in \cite{Kato}. One has to be careful, as an eigenvalue of higher multiplicity may not be $C^1$ in $\epsilon$. Such eigenvalues can break off to become many different eigenvalues under deformation. However, if we denote by $m(\lambda_j^2)$ the multiplicity of $\lambda_j^2 = \lambda_j^2(0)$, we will see that the sum $\sum_1^{m(\lambda_j^2)} \lambda_{j,k}^2(\epsilon)$ is in fact $C^1$ in $\epsilon$. We actually prove a more general theorem: let $g$ be holomorphic in a neighborhood of the eigenvalue $\lambda^2_j$ and denote the resolvent operator by $\widetilde{R}_\epsilon(z) = (-\Delta_\epsilon - z)^{-1}$ for $z \notin \text{Spec}(-\Delta_\epsilon)$, with Schwartz kernel $\widetilde{G}_\epsilon(z,x,y)$. We write $\widetilde{R}_\epsilon$ and $\widetilde{G}_\epsilon$ since the spectral parameter is $z$ instead of $z^2$. Then, by the Cauchy integral formula, we have
\begin{align}\label{Kato}
\delta \sum_{k = 1}^{m_j(\lambda_j^2)} g(\lambda_{j,k}^2(\epsilon)) = \delta \text{Tr} T_{g,\epsilon},
\end{align}
where
\begin{align*}
T_{g, \epsilon} = \frac{-1}{2\pi i} \int_{\gamma} g(z) \widetilde{R}_\epsilon(z)dz,
\end{align*}
for $\gamma$ a small, positively oriented circle enclosing only the $\lambda_j^2$ eigenvalue. When $\epsilon = 0$, $T_{g,0}$ is $g$ of the orthogonal projection onto the eigenspace of $\lambda_j^2$. As the eigenvalues do vary continuously in $\epsilon$, for $\epsilon \ll 1$, $T_{g,\epsilon}$ is the total projector, i.e. $g$ composed with the projection onto the direct sum of the eigenspaces of $\lambda_{j,k}^2 (\epsilon)$ for $1 \leq k \leq m(\lambda_j^2)$. $T_{g,\epsilon}$ is in fact a $C^1$ family of operators in $\epsilon$ since the resolvent is. The trace can also be obtained by integrating $\widetilde{G}_\epsilon$ over the diagonal, which combined with equation \eqref{Kato}, gives
$$
\delta \sum_{k = 1}^{m_j(\lambda_j^2(0))} g(\lambda_{j,k}^2(\epsilon)) = \frac{-1}{2\pi i} \int_{\gamma} g(z) \left( \delta \int_{\Omega_\epsilon} \widetilde{G}_\epsilon (z,x,x) dx \right) dz.
$$
As in Section \ref{A general variational formula}, we can pull back $\widetilde{G}_\epsilon (z,x,x) dx$ via $\phi_\epsilon^*$ to obtain an integral over a fixed domain, which we then differentiate. We have
\begin{align*}
\delta \sum_{k = 1}^{m_j(\lambda_j^2(0))} g(\lambda_{j,k}^2(\epsilon)) &= \frac{-1}{2\pi i} \int_{\gamma} g(z) \left\{\int_{\Omega_0}  \delta \phi_{\epsilon}^* \widetilde{G}_\epsilon (z,x,x) dx + \int_{\Omega_0} \widetilde{G}_0 (z,x,x) \delta \phi_{\epsilon}^* dx\right\} dz\\
&= \frac{-1}{2\pi i} \int_{\gamma} g(z) \left\{\int_{\Omega_0}  \theta_X \widetilde{G}_\epsilon (z,x,x) dx + \int_{\Omega_0} \widetilde{G}_0 (z,x,x) \text{div} X dx\right\} dz,
\end{align*}
where the last line follows from the definition of $\theta_X$ and the standard fact from Riemannian geometry that the Lie derivative with respect to $X$ of the volume form gives the divergence of $X$ times the volume form, i.e. $\theta_X(dx) = \text{div} X dx$. Using the formula for $\theta_X$ in Lemma \ref{Lie derivative and first variation} and the divergence theorem, we obtain
\begin{align*}
\frac{-1}{2\pi i} \int_{\gamma} g(z) \left\{\int_{\Omega_0}  (\delta + X) \widetilde{G}_\epsilon (z,x,x) dx - \int_{\Omega_0} X \widetilde{G}_0 (z,x,x) dx + \int_{\d \Omega_0} X_\nu \widetilde{G}_0(z,q,q)dq \right\} dz\\
= \frac{-1}{2\pi i} \int_{\gamma} g(z) \left\{\int_{\Omega_0}  \delta \widetilde{G}_\epsilon (z,x,x) dx + \int_{\d \Omega_0} \dot{ \rho}(q) \widetilde{ G}_0(z,q,q)dq \right\} dz.
\end{align*}
We now plug in our variational formula for the Green's kernel from Theorem \ref{Hadamard} to see that
\begin{align*}
\delta \sum_{k = 1}^{m_j(\lambda_j^2(0))} g(\lambda_{j,k}^2(\epsilon)) &= \frac{-1}{2\pi i} \int_{\gamma} g(z) \left\{\int_{\Omega_0}  \delta \widetilde{G}_\epsilon (z,x,x) dx + \int_{\d \Omega_0} \dot{ \rho}(q) \widetilde{ G}_0(z,q,q)dq \right\} dz\\
&= \frac{- 1}{2\pi i} \int_{\Omega_0} \int_{\partial \Omega_0} \int_\gamma g(z) (-\nabla_2^T \widetilde{G}_0(z , x, q)\cdot \nabla_1^T\widetilde{ G}_0 (z, q, x) \dot{\rho})\,dzdq
dx\\
& + \frac{- 1}{2\pi i} \int_{\Omega_0} \int_{\partial \Omega_0} \int_\gamma g(z)(z \dot{\rho} + K_0^2 \dot{\rho} + K_0 \kappa \dot{\rho} + \dot{K}) \widetilde{G}_0(z, x, q) \widetilde{G}_0 (z, q, x)\,dzdqdx\\
& + \frac{- 1}{2\pi i} \int_{\gamma} \int_{\d \Omega_0} g(z) \dot{ \rho}(q) \widetilde{ G}_0(z,q,q)dq dz.
\end{align*}
Denote these three integrals by $I_1, I_2$ and $I_3$ and let $(\Psi_{j,k})_{k = 1}^{m(\lambda_j^2)}$ be an orthonormal basis for the eigenspace corresponding to the eigenvalue $\lambda_j^2$. Then, via the Cauchy integral formula, we have
\begin{align*}
I_1 &= \sum_1^{m(\lambda_j^2)} \int_{\gamma} \int_{\partial \Omega_0} \frac{g(z)}{(2\pi i)(z - \lambda_j^2)^2} \nabla^T \Psi_{j,k}(q) \cdot \nabla^T \Psi_{j,k}(q) \dot{\rho} (q) dq dz\\
& = g'(\lambda_j^2) \sum_{k = 1}^{m(\lambda_j^2)} \int_{\partial \Omega_0} |\nabla^T \Psi_{j,k} (q)|^2 \dot{\rho} dq.
\end{align*}
Similarly, we have
\begin{align*}
I_2 = & -g(\lambda_j^2) \sum_{k=1}^{m(\lambda_j^2)} \int_{\d \Omega_0} |\Psi_{j,k} (q)|^2 \dot{\rho}(q) dq\\
  & - g'(\lambda_j^2) \int_{\d \Omega_0} \left( \sum_{k = 1}^{m(\lambda_j^2)} | \Psi_{j,k}(q)|^2\right) \left(\lambda_j^2 \dot{\rho} + K_0^2 \dot{\rho} + K_0 \kappa \dot{\rho} + \dot{K} \right) dq\\
& = \int_{\d \Omega_0} \left( \sum_{k = 1}^{m(\lambda_j^2)} | \Psi_{j,k}(q)|^2\right) \left(- g(\lambda_j^2) \dot{\rho} - g'(\lambda_j^2)(\lambda_j^2 \dot{\rho} + K_0^2 \dot{\rho} + K_0 \kappa \dot{\rho} + \dot{K}) \right) dq
\end{align*}
and
\begin{align*}
I_3 = \sum_{k = 1}^{m(\lambda_j^2)}  \int_{\d \Omega_0} g(\lambda_j^2) \dot{ \rho}(q) |\Psi_{j,k}(q)|^2 dq.
\end{align*}
Combining these terms and noticing that $I_3$ cancels with one of the terms in $I_2$, we obtain
\begin{align}\label{Variational formula for g}
\delta \sum_{k = 1}^{m_j(\lambda_j^2)} g(\lambda_{j,k}^2(\epsilon)) = g'(\lambda_j^2) \sum_{k =  1}^{m_j(\lambda_j^2)} \int_{ \d \Omega_0 }  |\nabla^T \Psi_{j,k}|^2 \dot{ \rho} - |\Psi_{j,k}|^2 (\lambda_j^2 \dot{ \rho} + K_0^2 \dot{ \rho} + K_0 \kappa \dot{ \rho} + \dot{K}) dq.
\end{align}
To compute the variation of the even wave trace in particular, set $g(z) = \cos(t \sqrt{z})$ in equation \eqref{Variational formula for g}. Despite the square root, this is in fact an entire function since cosine is even. We have
\begin{align*}
& \delta \sum_{j = 1}^{\infty} \sum_{k = 1}^{m(\lambda_j^2)} \cos(t \lambda_{j,k}(\epsilon)) = 
\sum_{j = 1}^{\infty} \delta \sum_{k = 1}^{m(\lambda_j^2)} \cos(t \lambda_{j,k}(\epsilon))\\
& = \sum_{j = 1}^\infty \sum_{k =  1}^{m(\lambda_j^2)} \frac{- t \sin(t \lambda_j)}{2 \lambda_j} \int_{ \d \Omega_0 }   |\nabla^T \Psi_{j,k}|^2 \dot{ \rho} - |\Psi_{j,k}|^2 (\lambda_j^2 \dot{ \rho} + K_0^2 \dot{ \rho} + K_0 \kappa \dot{ \rho} + \dot{K}) dq.
\end{align*}
Writing this in terms of the wave kernels, we obtain
\begin{align}\label{wave kernel variation}
\begin{split}
\delta \text{Tr} \cos t & \sqrt{-\Delta_\epsilon} =\\
\int_{\d \Omega_0} \frac{-t}{2} \nabla_1^T \nabla_2^T S_R(t,q,q) \dot{\rho} &  + \frac{t}{2} \dot {\rho} (-\Delta_{2} + K_0^2 + K_0\kappa) S_R(t,q,q)  + \frac{t}{2} S_R(t,q,q) \dot{K} \,dq.
\end{split}
\end{align}
Note that all but one of the terms above contain $\dot{\rho}$. Recalling that in the begining of the section, we defined
$$
L_R^b(t,q,q') = \frac{t}{2} (-\nabla_1^T \nabla_2^T  -\Delta_2  + K_0^2 + K_0 \kappa) S_R
$$
to be the coefficient of $\dot{\rho}$ in the expression \eqref{wave kernel variation}, we obtain Theorem \ref{wavetrace}. Similarly, setting $g(z) = z$ in equation \eqref{Kato} easily yields Theorem \ref{simple eigenvalue} on the variation of simple eigenvalues.
\end{proof}

\section{Billiards}\label{Billiards}
Before obtaining a singularity expansion for the wave trace, we first review the relevant background needed on billiards. This will also be useful in our discussion of Chazarain's parametrix in Section \ref{Chazarain's Parametrix}. In this section, we drop the subscript $0$ from our domain in Section \ref{Variation of wave trace} and let $\Omega$ denote any bounded strictly convex region in $\R^2$ with smooth boundary. This means that the curvature of $\d \Omega$ is a strictly positive function. The billiard map is defined on the coball bundle of the boundary $B^* \d \Omega = \{(q, \zeta) \in T^*\d \Omega : |\zeta| < 1 \}$, which can be identified with the inward part of the circle bundle $S_{\d \Omega}^* \R{^2}$ via the natural orthogonal projection map. We can also identify $B^* \partial \Omega$ with $\R / \ell \Z \times [0,\pi]$, where $\ell=|\partial \Omega|$ is the length of the boundary. Define
\begin{align*}
t_{\pm}^1(y,\eta)& = \inf\{t > 0 : \pi_1( g^{\pm t}(y,\eta)) \in  \d \Omega \},\\
t_{\pm}^{-1}(y,\eta) &= \sup \{t < 0 : \pi_1(g^{\pm t}(y,\eta)) \in  \d \Omega \},
\end{align*}
where $\pi_1$ is projection onto the first factor and $g^{\pm t}$ is the forwards $(+)$ or backwards $(-)$ geodesic flow on $\R^2$, corresponding to the Hamiltonian $H_\pm = \pm|\eta|$. We then define
$$
\beta^{\pm1}(y,\eta) = \widehat{g^{t_\pm^1}(y,\eta)},
$$
where a point $\widehat{(x,\xi)}$ is the reflection of $\xi$ through the cotangent line $T_x^*\d \Omega$. In otherwords, $\widehat{(x,\xi)}$ has the same footpoint and cotangential component as $(x,\xi)$, but reflected conormal component, so that it is again in the inward facing portion of the circle bundle. We call $\beta := \beta^{+1}$ the billiard map. It is well known that $\beta$ preserves the natural symplectic form induced on $B^*(\d \Omega)$. Associated to this map is the billiard flow, or broken bicharacteristic flow, which we denote by $\Phi^t$.\\
\\
The times $t_{\pm}^j$ are defined inductively by
\begin{align*}
t_{\pm}^j(y,\eta)& = \inf\{t > 0 :\pi_1( g^{\pm t_{j-1} + t}(y,\eta)) \in  \d \Omega \}, \quad j \in \Z,\\
t_{\pm}^{-j}(y,\eta) &= \sup \{t < 0 : \pi_1(g^{\pm t_{j-1} + t}(y,\eta)) \in  \d \Omega \}, \quad j \in \Z,
\end{align*}
and the maps $\beta^{\pm n}$ are defined via iteration. We also define $T_\pm^n = \sum_{j = 1}^n t_\pm^j$ to be the total time of the flow from $(y,\eta)$ to $\beta^n(y,\eta)$.\\
\\
Geometrically, a billiard orbit corresponds to a union of line segments which are called links. A smooth closed curve $\mathcal C$ lying in $\Omega$ is called a {caustic} if any link drawn tangent to $\mathcal C$ remains tangent to $\mathcal C$ after an elastic reflection at the boundary of $\Omega$. By elastic reflection, we mean that the angle of incidence equals the angle of reflection at an impact point on the boundary. We map $\mathcal C$ onto the total phase space $B^* \partial \Omega$ to obtain a smooth closed curve which is invariant under $\beta$. In the case that the dynamics are integrable, these invariant curves are precisely the Lagrangian tori which folliate the phase space. A point $P$ in $B^*\partial \Omega$ is $q$-periodic, $q \geq 2$, if $\beta^q(P)=P$. We define the rotation number of a $q$-periodic point $P$ by $\omega(P)= \frac{p}{q}$, where $p$ is the winding number of the orbit generated by $P$, which we now define. We may consider the modified billiard map $\widetilde{\beta} = \Pi^* \beta$, where $\Pi$ is the natural mapping from $\R / \ell \Z \times [0,\pi]$ to the closure of the coball bundle $\overline{B^*\d \Omega}$. Pulling back by $\Pi$ clearly preserves the notion of periodicity. There exists a unique lift $\widehat{\beta}$ of the map $\widetilde{\beta}$ to the closure of the universal cover $\R \times [0,\pi]$ which is continuous and satisfies $\widehat{\beta}(x,0) = (x,0)$. Given this normalization, for any point $(x,\theta) \in \R/\ell \Z \times [0,\pi]$ in a $q$ periodic orbit of $\widetilde{\beta}$, we see that $\widehat{\beta}(x,\theta) = (x + p \ell, \theta)$ for some $p \in \Z$. We define this $p$ to be the winding number of the orbit generated by $\Pi (x,\theta) \in \overline{ B^* \d \Omega}$. We see that even if a point $\Pi (x,\theta)$ generates an orbit which is not periodic in the full phase space but is such that $\pi_1 (\widetilde{\beta}^q(x,\theta)) = x$ for some $q \in \Z$, we can still define a winding number in this case. Such orbits are called loops or geodesic loops. For deeper results and a more thorough treatment of general dynamical billiards, we refer the reader to \cite{Tabachnikov}, \cite{Katok}, \cite{Popov1994} and \cite{PopovTopalov}.

\subsection{Elliptical billiards}\label{Elliptical billiards}
From here on, we let $\Omega$ be an ellipse with horizontal major axis, given by the equation
$$
\frac{x^2}{a^2} + \frac{y^2}{b^2}  \leq 1.
$$
The eccentricity of $\Omega$ is defined by $E = a^2 - b^2$.
\\
\\
Birkhoff conjectured that the only strictly convex {integrable} billiard tables are ellipses. Completely integrable means that there exists a folliation of the phase space by invariant submanifolds. In the context of Hamiltonian systems, this can be shown to be equivalent to the existence of a maximal number of Poisson commuting invariants, called first integrals. The (compact) energy level sets of regular values of these invariants can be shown to be diffeomorphic to tori and the leaves of a maximal such foliation are called Lagrangian tori. The Lagrangian tori are naturally parametrized by so called ``angle coordinates'' while the transversal directions in phase space are then parametrized by ``action coordinates''. It is well known that for each $Z \in (-E,0) \cup (0,b]$, the confocal ellipse ($Z>0$) or hyperbola ($Z<0$) given by 
$$
\frac{x^2}{E + Z} + \frac{y^2}{Z} = 1,
$$
is also a caustic (see Figure \ref{caustic pictures}). A short proof of this can be found using elementary planar geometry in the appendix of \cite{GuMe79a}. For elliptical caustics, we follow the notation in \cite{KaSo16} and \cite{DCR17} by setting $\lambda^2 = Z \geq 0$. In the context of \cite{KaSo16}, integrable is taken to mean that the union of all convex caustics has a non-empty interior in $\R^2$. Ellipses are both completely integrable and integrable in the sense of \cite{KaSo16}.\\
\\
\begin{figure}
\includegraphics[scale=0.8]{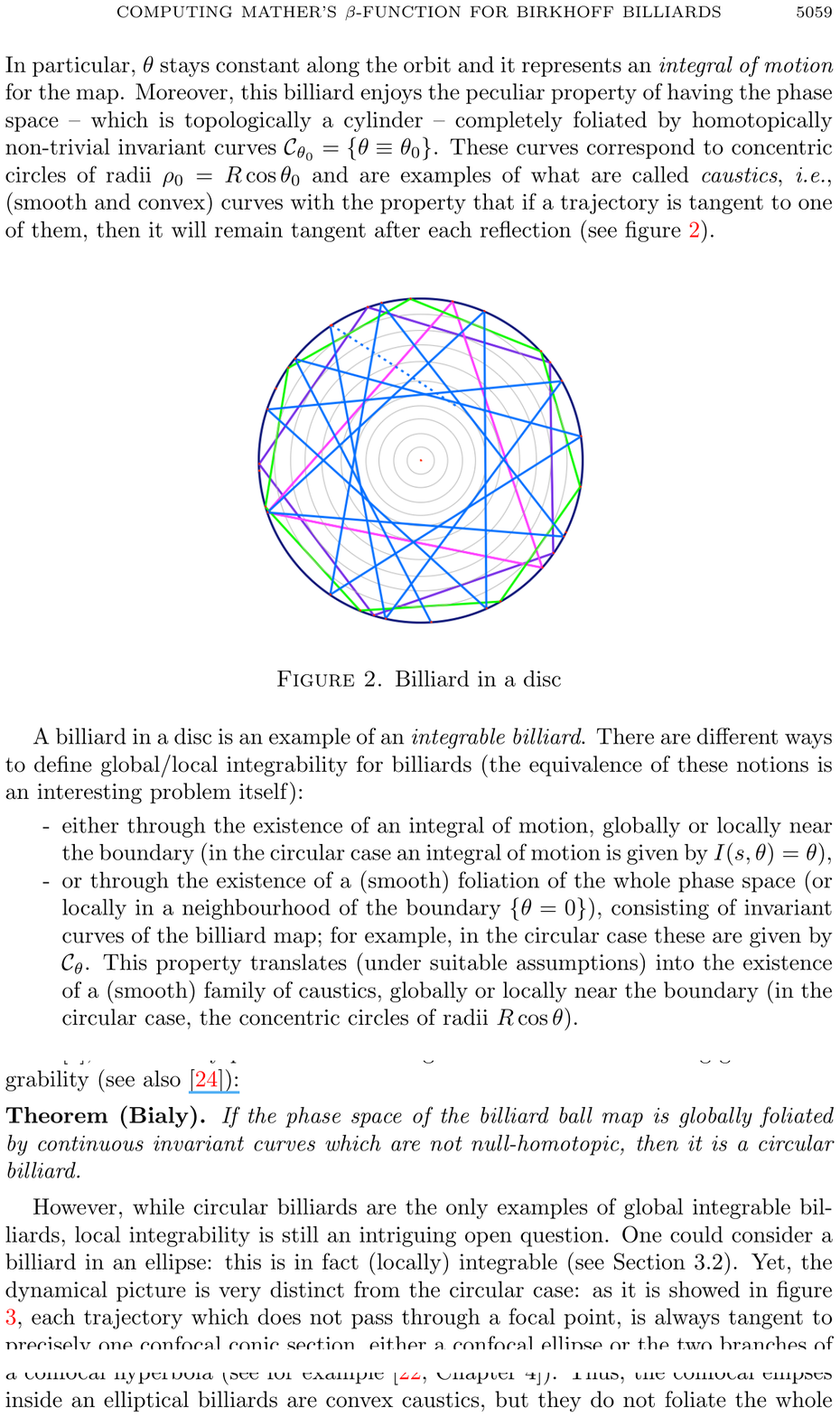}
\includegraphics[scale=0.7]{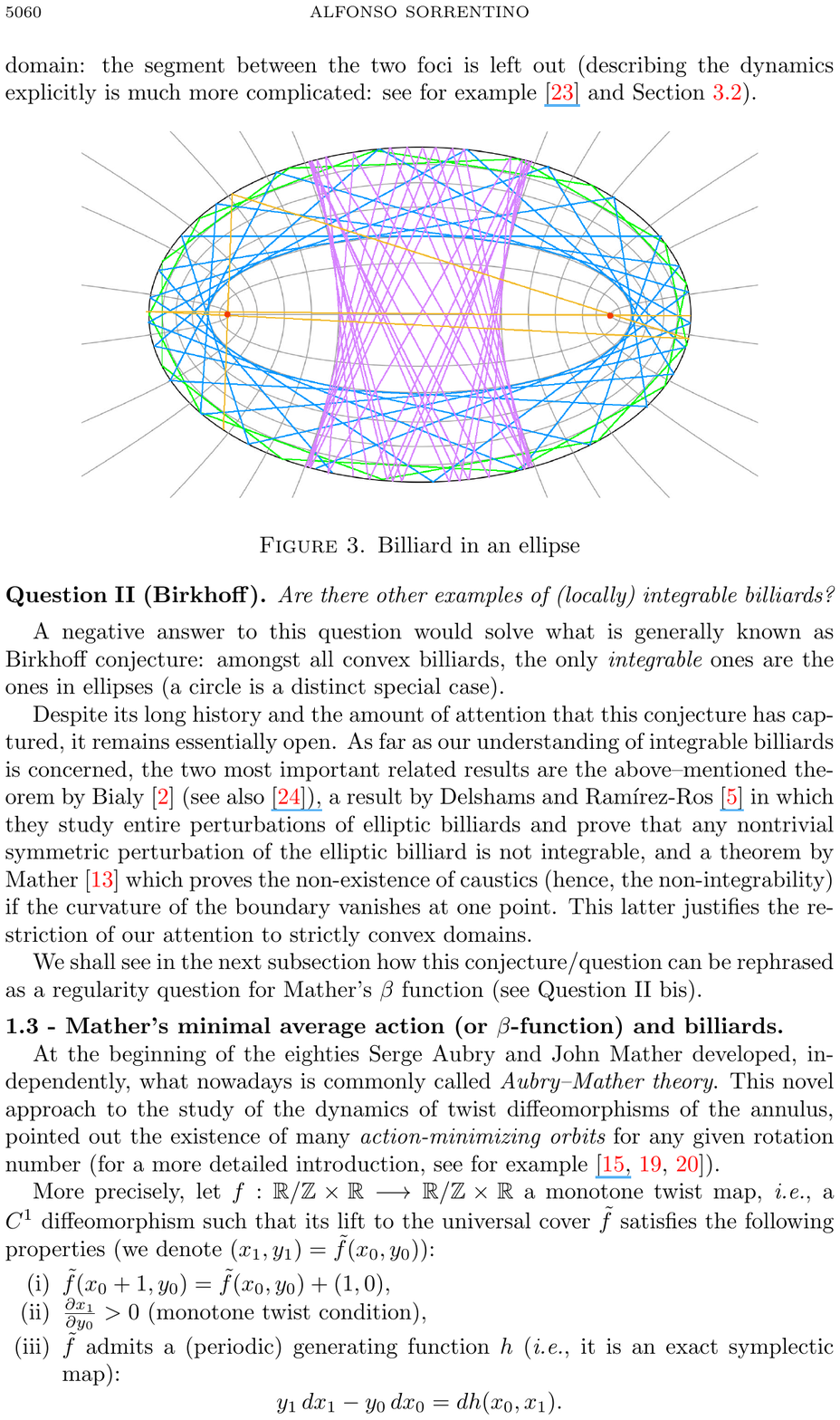}
\caption[Billiards and caustics on the disk and ellipse.]{Billiards and caustics on the disk and ellipse. \protect \footnotemark}
\label{caustic pictures}
\end{figure}
\footnotetext{Images courtesy of Vadim Kaloshin and Alfonso Sorrentino.}
In 1822, Poncelet proved the following remarkable theorem:
\begin{theo}  [\cite{Poncelet1}, \cite{Poncelet2}] \label{poncelet}
Given an ellipse, if a primitive periodic billiard trajectory is tangent to a confocal conic section, then all orbits tangent to that caustic are also periodic, have the same periods, and have the same lengths.
\end{theo}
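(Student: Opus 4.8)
The plan is to derive Poncelet's theorem from the complete integrability of the elliptical billiard, handling the three conclusions — periodicity, equal periods, equal lengths — in turn. First I would fix the confocal conic $\mathcal C = \mathcal C_{\lambda^2}$ to which the given primitive periodic trajectory is tangent and pass to phase space. As recalled above, and proved in the appendix of \cite{GuMe79a}, $\mathcal C$ is a caustic, so the set $\Gamma_{\lambda^2}\subset B^*\partial\Omega$ of all $(q,\zeta)$ whose emanating chord is tangent to $\mathcal C$ is a smooth closed $\beta$-invariant curve, and as $\lambda^2$ varies these curves foliate the relevant annular region of $B^*\partial\Omega$. The given orbit is then the $\beta$-orbit of a point $P\in\Gamma_{\lambda^2}$ with $\beta^q(P)=P$, where $q$ is the number of reflections and $p$ the winding number, so that $\omega(P)=p/q$ with $\gcd(p,q)=1$ by primitivity.

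The heart of the argument is to linearize $\beta$ on the leaf $\Gamma_{\lambda^2}$. By complete integrability — equivalently, by the Arnold--Liouville theorem applied to the billiard map, which preserves the area form $\sin\theta\, d\theta\wedge ds$ together with the first integral $\lambda^2$, or for the ellipse by the explicit elliptic-function linearization of the confocal billiard, cf.\ \cite{Tabachnikov} — I would introduce an angle coordinate $s\in\R/\Z$ on $\Gamma_{\lambda^2}$ in which $\beta$ acts as the rigid rotation $s\mapsto s+\omega(\lambda^2)$, the rotation number $\omega(\lambda^2)$ depending only on the leaf and not on the chosen point. Then $\beta^q(P)=P$ forces $q\,\omega(\lambda^2)\in\Z$, hence $\omega(\lambda^2)=p/q$, so that $\beta^q$ is rotation by the integer $p$, i.e.\ $\beta^q=\Id$ on all of $\Gamma_{\lambda^2}$. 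Thus every trajectory tangent to $\mathcal C$ is periodic, makes exactly $q$ reflections with winding number $p$, and is itself primitive since $\gcd(p,q)=1$; this yields the first two assertions.

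For the equality of lengths I would argue variationally, using only the reflection law and the fact just established that moving along $\Gamma_{\lambda^2}$ carries $q$-periodic orbits to $q$-periodic orbits. Parametrizing $\Gamma_{\lambda^2}$ by $s$, let $x_0(s),\dots,x_{q-1}(s)\in\partial\Omega$, with $x_q:=x_0$, be the successive impact points of the closed orbit through the point with angle coordinate $s$ — these depend smoothly on $s$ since they are projections of iterates of $\beta$ — and set
\begin{align*}
L(s)=\sum_{i=0}^{q-1}\ell\big(x_i(s),x_{i+1}(s)\big),
\end{align*}
where $\ell$ is Euclidean chord length. The billiard reflection law at the vertex $x_i$ says exactly that $\partial_{x_i}\big[\ell(x_{i-1},x_i)+\ell(x_i,x_{i+1})\big]=0$ as a derivative along $\partial\Omega$, so by the chain rule $dL/ds=\sum_i \partial_{x_i}[\,\cdots\,]\cdot x_i'(s)=0$ and $L$ is constant on $\Gamma_{\lambda^2}$. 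Hence all orbits tangent to $\mathcal C$ have the same length.

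The main obstacle is the linearization step. Rationality of the rotation number alone does not suffice: a general orientation-preserving circle homeomorphism with rotation number $p/q$ may carry both periodic and non-periodic orbits (semi-stable periodic orbits joined by heteroclinic connections), so one genuinely needs the extra rigidity coming from integrability — concretely, that $\beta|_{\Gamma_{\lambda^2}}$ preserves a smooth measure of full support, which upgrades ``has a periodic point'' to ``is conjugate to a rotation.'' Alternatively one could run the classical algebraic-geometry proof via the group law on the elliptic curve attached to the pencil of confocal conics, but the integrability route is the one that fits the rest of the paper. Once this step is in hand, the closure and the length computation are both soft.
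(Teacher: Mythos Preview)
The paper does not prove this theorem: it is stated with attribution to Poncelet's treatises, followed by the remark that ``there are several modern proofs,'' and is used as a black box thereafter. Your proposal is therefore not competing against any proof in the paper.

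That said, your argument is correct and in fact dovetails with machinery the paper develops later for other purposes. In Section~\ref{d omega d theta} the paper introduces the explicit Jacobi elliptic-function parametrization $q_\lambda(s)=(-a\sn(s;k_\lambda),b\cn(s;k_\lambda))$ and cites \cite{ChFr88} for the statement that the chord from $q_\lambda(s)$ to $q_\lambda(s+\delta_\lambda)$ is tangent to $C_\lambda$, so that $\beta$ acts on each invariant curve as translation by $\delta_\lambda$. This is exactly the linearization you invoke abstractly via Arnold--Liouville, and it makes your ``rotation number is rational $\Rightarrow$ every point on the leaf is periodic with the same period'' step immediate and concrete. Your variational computation for equality of lengths is also the standard one and matches the Lagrange-multiplier discussion of the length functional in the proof of Lemma~\ref{parametrization of canonical relations}. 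The one point you rightly flag as the substantive step --- that $\beta|_{\Gamma_{\lambda^2}}$ is genuinely conjugate to a rotation, not merely a circle map of rational rotation number --- is handled by either of the mechanisms you name, and the paper's later use of the explicit elliptic parametrization shows it implicitly relies on the same fact.
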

Hence, periodic orbits of a given length come in 1-parameter families. Poncelet's original theorem, as stated in his 1865 treatises \cite{Poncelet1} and \cite{Poncelet2}, was much more general and concerned inscribing and circumscribing polygons about other confocal conic sections. There are several modern proofs of Poncelet's theorem. Because of this result, we can project confocal ellipses to invariant curves in the coball bundle $B^*\d\Omega$. We will use $\omega(\mathcal C)$ for the rotation number of such an invariant curve. For ellipses, the rotation number of a periodic point is always the same as the rotation number of its corresponding invariant curve.\\
\\
Birkhoff proved in \cite{Birkhoff} that for any smooth, strictly convex domain and any $\frac{p}{q} \in (0, \frac12]$ in lowest terms, there exist at least two geometrically distinct periodic orbits with rotation number $\frac{p}{q}$. A strictly convex billiard table is said to be {rationally integrable} if for each $q \geq 3$, there exists a caustic consisting of periodic points of rotation number $\frac{1}{q}$. In particular, the rotation number of the invariant curve corresponding to such a caustic must be $\frac{1}{q}$. A third version of Birkhoff's conjecture is that ellipses are the only rationally integrable strictly convex billiard tables. We have seen that there exist many notions of integrability, yet Birkhoff's conjecture remains open for all of them. However, as mentioned in Section \ref{Background}, a local version was proven in \cite{KaSo16}.
\\
\\
In \cite{GuMe79a}, it is shown that periodic points of the billiard map for an ellipse are dense in phase space. Given a length $T \in \text{Lsp}(\Omega)$, the associated fixed point set is denoted $F_T = \{(q, \zeta) \in B^* \d \Omega : \Phi^T(q, \eta) = (q,\eta)\}$. In \cite{GuMe79a}, the authors construct a special sequence of caustics converging to the boundary such that the associated fixed point submanifold in $B^*\d \Omega$ has exactly two connected components, corresponding to forwards and backwards flow:
\begin{prop} [\cite{GuMe79a}] \label{caustics} 
Let $T_0 = |\d \Omega|$ be the perimeter of the ellipse $\Omega$. Then in every interval $(T_0 - \epsilon, T_0)$, there exist infinitely many lengths $T \in \text{Lsp}(\Omega)$. For all but a finitely many such $T \in \text{Lsp}(\Omega)$, $F_T$ is the union of two invariant curves which are mapped to each other by $(q,\zeta) \to (q, - \zeta)$.
\end{prop}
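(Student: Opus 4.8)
The plan is to reconstruct the argument of Guillemin and Melrose \cite{GuMe79a}, using rational integrability of the ellipse and Poncelet's theorem to manufacture the lengths $T$, and the optical property of confocal conics to describe the fixed--point sets $F_T$.

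First I would produce the lengths. Parametrize the confocal ellipse caustics as in Theorem \ref{wavetrace variation} by $\lambda^2$, so that $\lambda^2 = 0$ corresponds to $\partial\Omega$ and the caustic approaches the boundary as $\lambda^2 \to 0^+$. The rotation function $\lambda^2 \mapsto \omega(\lambda^2)$ is real--analytic and strictly monotone on the relevant interval, with $\omega(\lambda^2) \to 0$ as $\lambda^2 \to 0^+$; this is a standard computation with the elliptic integrals expressing $\omega$. Hence for every integer $q \geq 3$ there is a unique confocal ellipse caustic $\mathcal{C}_q$ with $\omega(\mathcal{C}_q) = 1/q$, and its parameter tends to $0$ as $q \to \infty$. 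By Poncelet's theorem (Theorem \ref{poncelet}), every billiard trajectory tangent to $\mathcal{C}_q$ is periodic with exactly $q$ reflections and winding number one, and all such trajectories share a common length $T_q$. The length function is likewise monotone in the caustic parameter and $T_q \to T_0 = |\partial\Omega|$ from below as $q \to \infty$, exactly as the regular--polygon perimeters $2qR\sin(\pi/q)$ behave for the disk, so the $T_q$ are eventually pairwise distinct and accumulate at $T_0$. This already produces infinitely many elements of $\text{Lsp}(\Omega)$ in each interval $(T_0 - \epsilon, T_0)$.

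Next I would identify $F_{T_q}$. By the simplicity statement recalled in the remark following Theorem \ref{wavetrace variation}, there is a $q_0$ such that for $q \geq q_0$ the length $T_q$ is simple, i.e.\ every periodic billiard trajectory of length $T_q$ is tangent to $\mathcal{C}_q$. Discarding also the finitely many remaining $q$ for which $T_q$ happens to coincide with the length of some other (shorter, or librational / bouncing--ball) orbit, we may assume $F_{T_q}$ consists precisely of those $(q,\zeta) \in B^*\partial\Omega$ whose trajectory is tangent to $\mathcal{C}_q$. Since $\mathcal{C}_q$ is a smooth strictly convex curve lying strictly inside $\Omega$, through each $q \in \partial\Omega$ there pass exactly two distinct lines tangent to $\mathcal{C}_q$, and the classical optical (string--construction) property of confocal conics shows these two lines make equal angles with $\partial\Omega$ at $q$. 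Therefore the covectors they determine are of the form $(q,\zeta)$ and $(q,-\zeta)$ with $\zeta \neq 0$, and as $q$ runs once over $\partial\Omega$ these trace out two disjoint smooth closed curves $\Gamma_+$ and $\Gamma_-$ in $B^*\partial\Omega$, each invariant under the billiard map $\beta$ and interchanged by $(q,\zeta) \mapsto (q,-\zeta)$. Thus $F_{T_q} = \Gamma_+ \cup \Gamma_-$ is the asserted union of two invariant curves.

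The main obstacle is the simplicity input: a priori $T_q$ could also be realized by orbits of larger winding number or by orbits tangent to confocal hyperbolas, and excluding this for all large $q$ is precisely the delicate length--asymptotics analysis of \cite{GuMe79a}, which I would cite rather than reprove. The remaining ingredients — monotonicity and the boundary limits of the rotation and length functions through the elliptic integrals, and the equal--angle property of confocal conics — are classical and amount to routine bookkeeping.
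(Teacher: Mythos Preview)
The paper does not actually prove this proposition; it is quoted from \cite{GuMe79a} and used as a black box (see the remark following Theorem \ref{wavetrace variation}). So there is no ``paper's own proof'' to compare against, and your sketch is essentially a reconstruction of the Guillemin--Melrose argument that the paper is citing.

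Your outline is sound and matches the original: produce the caustics $\mathcal{C}_q$ of rotation number $1/q$ via monotonicity of the rotation function, invoke Poncelet to get common lengths $T_q \nearrow T_0$, cite the simplicity result for $q$ large, and then read off the two invariant curves from the two tangent lines to $\mathcal{C}_q$ through each boundary point, which the optical property pairs under $(q,\zeta)\mapsto(q,-\zeta)$. One small point worth tightening: the proposition asserts that \emph{all but finitely many} lengths $T$ in $(T_0-\epsilon,T_0)$ have the two--curve property, not just the $T_q$ you constructed. Your argument implicitly needs that for $\epsilon$ small the \emph{only} lengths in $(T_0-\epsilon,T_0)$ are (a tail of) the $T_q$; this follows because orbits of winding number $p\geq 2$ have lengths near $pT_0 > T_0$, and hyperbola--tangent (librational) orbits have lengths bounded away from $T_0$. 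You gesture at this with ``discarding also the finitely many remaining $q$'', but it is cleaner to state it as: the only accumulation of $\text{Lsp}(\Omega)$ at $T_0^-$ comes from the rotational $1/q$ family. With that clarification your sketch is correct, and you are right that the genuinely nontrivial step is the length--spectral simplicity, which you appropriately defer to \cite{GuMe79a}.
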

The time reversal map $(q,\zeta) \to (q,-\zeta)$ reverses the direction of a closed geodesic, which of course preserves its length and geometry. We call the lengths $T$ in Proposition \ref{caustics} simple, as all periodic trajectories of length $T$ are tangent to a single caustic. Proposition \ref{caustics} will be crucial in evaluating our singularity expansion in Section \ref{Converting the singularity into an elliptic integral} and allowing us to differentiate the constants $c_j$ and $\widetilde{c_j}$ from Theorem \ref{wavetrace variation} near $\lambda = 0$. That there is only one connected component up to symmetry will rule out any cancellation between terms in the variation of the wave trace.

\section{A parametrix for $S_R$ and Singularity Expansion}\label{A parametrix for SR and singularity expansion}
In this section, we use microlocal analysis to obtain a singularity expansion for the variation of the wave trace near the length spectrum. In particular, we microlocalize the wave kernels near periodic transversal reflecting rays in order to obtain a Fourier integral operator (FIO). Using Theorem \ref{wavetrace}, we can rewrite
\begin{align}\label{composition of FIOs}
\delta \text{Tr} \cos t \sqrt{-\Delta_\epsilon} = \pi_* \Delta^* r_1 r_2( L_R^b \dot{\rho} + \frac{t}{2}S_R \dot{K}),
\end{align}
where $r_1, r_2$ are the boundary restriction operators, $\Delta: \d \Omega \to \d \Omega \times \d \Omega$ is the diagonal embedding, $\pi_*$ is integration over the fibers, and the modified propagator $L_R^b$ is given by
\begin{align*}
L_R^b = \frac{t}{2} (-\nabla_1^T \nabla_2^T  -\Delta_{2}  + K_0^2 + K_0 \kappa) S_R.
\end{align*}
This manipulation of notation is just to illustrate how one can decompose the wave trace variation into the composition of simpler Fourier integral operators. However, the wave kernels $L_R^b$ and $S_R$ in our formula are \textit{not} FIOs near the glancing set $S^*\d \Omega$. In \cite{HeZe12}, the  authors microlocalize the wave kernels near periodic nonglancing orbits and calculate the principal symbol of the composition \eqref{composition of FIOs} for Dirichlet and Neumann boundary conditions using the symbol calculus in \cite{DuGu75}. In contrast to the methods employed in \cite{HeZe12}, we instead take a more direct approach which avoids an application of the trace formula in \cite{DuGu75}.
\\
\\
We begin by reviewing FIOs and Chazarain's parametrix for the wave propagator. In Section \ref{Computing the singularity in elliptical polar coordinates}, we then cook up an explicit oscillatory integral representation for each term in Chazarain's parametrix, which microlocally approximates the wave propagator in the interior by using action angle coordinates for the billiard map. This will rely heavily on the symbol calculus in Section \ref{Chazarain's Parametrix} and a new phase function for wave propagator. This technique can also be extended to deal with other convex billiard tables, although the dynamics are not as simple.

\subsection{Fourier Integral Operators}\label{Fourier Integral Operators}
Let $X$ and $Y$ be open sets in $\R^{n_X}$ and $\R^{n_Y}$ respectively. Recall that a continuous linear operator $A : C_0^\infty(Y) \to \mathcal{D}'(X)$ has an associated Schwartz kernel $K_A \in \mathcal{D}'(X \times Y)$. If $a \in S^\mu_{1, 0}(X \times \R^N)$ is a classical symbol of order $\mu$ and $\Theta \in C^\infty(X  \times \R^N)$ is a nondegenerate phase function, then the linear form
$$
A(u) = \int_X \int_{\R^N} e^{i \Theta(x,\theta)} a(x,\theta) u(x) \,d\theta dx
$$
is called a Lagrangian or Fourier integral distribution on $X$. If we assume that $K_A$ is given by a locally finite sum of Lagrangian distributions on $X \times Y$, then we say $A$ is a Fourier integral operator (FIO). One can then show that the wavefront set of the kernel is contained in the image of the map $\iota_{\Theta}: (x, y) \mapsto (x, y, d_x \Theta, d_y\Theta)$ when restricted to the critical set $C_\Theta = \{d_\theta \Theta = 0\}$. The image of $\iota_\Theta$ is in fact a conic Lagrangian submanifold $\Lambda_\Theta \subset T^* (X \times Y)$ and the map $\iota_{\Theta}$ is actually a local diffeomorphism from $C_\Theta$ onto $\Lambda_\Theta$. In this case, we say that ``$\Theta$ parametrizes $\Lambda_\Theta$.'' The canonical relation or wavefront relation of $A$ is defined by
$$
WF'(A) = \{(x,\xi), (y, \eta) : (x,y,\xi, -\eta) \in WF(K_A)\} \subset T^*X \times T^*Y,
$$
which describes how the operator propagates the singularities of distributions on which it acts. More invariantly, one can consider FIOs associated to general conic Lagranigan submanifolds $\Lambda \subset T^*X \times T^*Y$ (canonical relations), with respect to the symplectic form $\omega_X - \omega_Y$. The notion of a principal symbol for Fourier integral operators is more subtle than that for pseudodifferential operators: the principal symbol of $A$ is a half density on $\Lambda$ given in terms of the parametrization $\iota_\Theta$:
\begin{align}\label{principal symbol of an FIO}
e = {\iota_\Theta}_* (a_0 |dC_\Theta|^{1/2}),
\end{align}
where $a_0$ is the leading order term in the asymptotic expansion for $a$ and $|dC_\Theta|^{1/2}$ is the half density associated with the Leray measure on the level set $\{d_\theta \Theta = 0\}$. Here, we have ignored Maslov factors coming from the Keller-Maslov line bundle over $\Lambda_\Theta$. These are nonzero factors $e^{i\sigma \pi/4}$ ($\sigma$ is known as the Maslov index) which appear in front of the principal symbol as a result of the multiplicity of phase functions parametrizing the canonical relation $\Lambda_\Theta$, possibly in different coordinate systems. While these factors allow the principal symbol to be defined in a more geometrically invariant way, we defer computation of the Maslov indices until Section \ref{Maslov}. For a more thorough reference on the global theory of Lagrangian distributions, see \cite{Du96}. The order of a Fourier integral operator is defined in such a way that when two Fourier integral operators' canonical relations meet transversally, then the composition is again a Fourier integral operator and order of the composition is the sum of the orders:
\begin{align*}
\text{order}(A) = m = \mu + \frac{1}{2}N - \frac{1}{4}(n_X + n_Y).
\end{align*}
Recall that here, $n_X$ and $n_Y$ are the dimensions of $X$ and $Y$ respectively. In this case, we write $A \in I^m(X \times Y, \Lambda)$. This convention on orders also generalizes pseudodifferential operators, where $X = Y$ and $m = \mu$ coincides with the order of the corresponding symbol class. A sufficient condition which guarantees that the composition exists is clean or transversal intersections of the two operators' canonical relations. In general, composition of Fourier integral operators and the associated symbol calculus is somewhat complicated, but is discussed in \cite{Du96}, \cite{Ho71} and \cite{DuHo72}. We will not directly use the composition formula in what follows.

\subsection{Chazarain's parametrix}\label{Chazarain's Parametrix}
Chazarain's parametrix provides a microlocal description of the wave kernels near periodic transversal reflecting rays. The parametrices for $E_R$ and $S_R$ are constructed in the ambient Euclidean space $\R \times \R^n \times \R^n$. We only consider $S_R$, as the formula for $E_R$ is easily obtained from that of $S_R$ by differentiating in $t$. Following the work in \cite{Ch76} and \cite{GuMe79b}, we can find a Lagrangian distribution
\begin{align}\label{Chazarain sum}
\widetilde{S_R}(t,x,y) = \sum_{j = - \infty}^{\infty} S_j(t,x,y), \qquad S_j \in I^{-5/4}(\R \times \R^n \times \R^n, \Gamma_\pm^j),
\end{align}
which approximates $S_R(t,x,y)$ microlocally away from the tangential rays modulo a smooth kernel. We will describe the canonical relations $\Gamma_\pm^j$ momentarily and in particular, show that the sum in \eqref{Chazarain sum} is locally finite. We first explain what is meant by approximating $S_R(t,x,y)$ ``microlocally away from the tangential rays.'' In general, two distributions $f,g \in \mathcal{D}'(\R^n)$ are said to agree microlocally near a closed cone $\Lambda \subset T^*\R^n$ if $WF(u - v) \cap \Lambda = \emptyset$. Similarly, using the language from Section \ref{Fourier Integral Operators}, two operators $A, B : C^\infty(Y) \to C^\infty(X)$ are said to agree microlocally near a given closed cone $\widetilde{\Lambda} \subset T^*X \times T^*Y$ if $WF'(A-B) \cap \widetilde{\Lambda} = \emptyset$. This second notion is what we will use to say that our parametrix approximates $S_R$ microlocally near the canonical relations $\Gamma_{\pm}^j$.
\\
\\
In this section, we study the problem
\begin{align*}
\begin{cases}
u_{tt} - \Delta u = 0, & x \in \Omega,\\
(\d_\nu - K)u = 0, & x \in \d \Omega,\\
u(0,x) = 0,\\
u_t(0,x) = \delta(x- x_0),
\end{cases}
\end{align*}
for $x_0 \in \text{int}\Omega$ and $K$ a Robin function as in equation \eqref{PDE}. Chazarain's construction begins with the solution of the homogeneous wave equation in the ambient Euclidean space:
\begin{align*}
\begin{cases}
u_{tt} - \Delta u = 0, & x \in \R^2,\\
u(0,x) = 0,\\
u_t(0,x) = \delta(x- x_0).
\end{cases}
\end{align*}
In this case, we have an explicit representation for the fundamental solution of $\Box = \d_{t}^2 - \Delta$, given by Kirchhoff's formula. If we restrict back to $\Omega$, it is clear that the fundamental solution on $\R^2$ will in general not satisfy the Robin boundary condition. However, finite speed of propogation implies that it \textit{does} satisfy the boundary condition for small time, since $u$ vanishes identically in a neighborhood of $\d \Omega$. If we let $d = d(x_0, \d \Omega)$, then we obtain a solution $u$ of the boundary value problem for $|t| < d$. The idea in \cite{Ch76} is to use the billiard flow and properties of FIOs to inductively extend the time interval on which the fundamental solution of the boundary value problem is defined.
\\
\\
With $t_\pm^j$ defined as in Section \ref{Billiards} on billiards and $(y,\eta) \in T^*\Omega$ or $T_{\d\Omega}^* \R^2$ inward pointing, we define
\begin{align*}
\lambda_\pm^1 &= g^{\pm t_\pm^1}(y,\eta),\\
\lambda_\pm^{-1} &= g^{\pm t_\pm^{-1}}(y,\eta).
\end{align*}
Notice that if $y \in \d \Omega$ and $|\eta| = 1$ is inward pointing, then $\beta(y, \iota^* \eta) = \widehat{\lambda_+^1(y, \eta)}$, where $\beta$ is the billiard map on $B^*\d \Omega$ and $\iota: B \d \Omega \to S_{\d \Omega} \R^2$ is the natural projection mapping the ball bundle of the boundary to the inward pointing portion of the circle bundle with footpoints on the boundary. Even though we can obtain one from the other, we define both $\lambda_\pm$ and $\lambda_\pm^{-1}$ in order to separate the forwards and backwards wave propagators corresponding to $t > 0$ and $t < 0$. After a reflection at the boundary, we can similarly define $\lambda_\pm^j(y,\eta)$ for any $j \in \Z$. Recall that in Section \ref{Billiards} we defined $T_\pm^j = \sum_{k = 1}^j t_\pm^k$ for $j > 0$ and $T_\pm^j = \sum_{k = j}^{-1} t_\pm^k$ for $j < 0$.\\
\\
To study how the fundamental solution behaves at the boundary, we propagate the intial data by the free wave propagator on $\R^2$, restrict it to the boundary, reflect, and then propagate again. If such a construction is continued for $j \in \Z$ reflections at the boundary, it is shown in \cite{Ch76} that the phase functions corresponding to the FIOs $S_j$ parametrize the canonical relations
$$
\Gamma_\pm^j = \begin{cases}
(t, \tau, g^{\pm t}(y,\eta), y, \eta): \tau = \pm |\eta| & j =0,\\
(t, \tau, g^{\pm (t - T_\pm^j(y,\eta))}\widehat{\lambda_\pm^j(y,\eta)}, y, \eta): \tau = \pm |\eta| & j \in \Z \backslash \{0\}.
\end{cases}
$$
Again, $j > 0$ and $j < 0$  correspond to reflections in forward and backward time. In fact, there are four modes of propagation, corresponding to $\pm \tau \geq 0$ and $\pm j \geq 0$ in the canonical relations $\Gamma_\pm^j$. Chazarain actually showed that there exists FIOs $S_j$ such that the sum in \eqref{Chazarain sum} is in fact a parametrix for the wave propagator $S_R$ with canonical relation
$$
\Gamma = \bigcup_{j \in \Z, \pm} \Gamma_{\pm}^j.
$$
However, the principal symbols of the operators $S_j$ are never computed in \cite{Ch76} and we concern ourselves with the task of explicitly computing them in the special case of an ellipse for the remainder of this section.
\\
\\
Recall that the Hadamard type variational formula for the wave trace in Theorem \ref{wavetrace} involved the integral of wave kernels over the diagonal of the boundary. To understand the principal symbol first in the interior, we study how the propagator reflects at the boundary. In particular, we want to study the canonical relations $\Gamma_\pm^j$ restricted to the fibers over the boundary, which we now describe. Denote by
$$
A_\pm^0 = \{(0,\tau, y, \eta, y, \eta): \tau = \pm |\eta|\}
$$
the fibers corresponding to zero reflections. If we flow out from $A_\pm^0$ by the Hamiltonian flow $\psi^{\pm t}$ of $H = \tau \pm |\xi|$, we obtain $\Gamma_\pm^0$. Note that $\psi^{\pm t}$ consists of geodesics lifted to $T^*(\R \times \Omega)$. Consider the following subsets of $\Gamma_\pm^0$:
\begin{align*}
A_\pm^1 = \{(t,\tau, \psi^{\pm t}(y,\eta), y, \eta): t > 0, \psi^{t}(y,\eta) \in T_{\d \Omega }^* \R^2, \tau = \pm |\eta|\},\\
A_\pm^{-1} = \{(t,\tau, \psi^{\pm t}(y,\eta), y, \eta): t < 0, \psi^{t}(y,\eta) \in T_{\d \Omega }^* \R^2, \tau = \pm |\eta|\}.
\end{align*}
If \, $\widehat{}$ \, denotes reflection in the left factor, we have
\begin{align*}
\Gamma_\pm^{1} = \bigcup_{t \in \R} \widehat{A_\pm^1}, \qquad \Gamma_\pm^{-1} = \bigcup_{t \in \R} \widehat{A_\pm^{-1}} .
\end{align*}
We define $A_\pm^j$ and $\widehat{A_\pm^j}$ similarly and note that these are precisely the fibers of the canonical relations $\Gamma_\pm^j$ lying over the boundary. In the next section, we will compute the principal symbol of the wave propagator in coordinates on the critical set. Therefore, we first need to better understand the forwards and backwards symbols on $\Gamma$.
\begin{prop}\label{symbol prop}
Let $e_\pm$ denote the principal symbol of $\widetilde{S_R}$ on $\Gamma = \bigcup_{j \in \Z, \pm} \Gamma_\pm^j$. Then, we have
$$
e_\pm = \frac{1}{2 \tau i} |dt \wedge dy \wedge d\eta|^{1/2}.
$$
Furthermore, the principal symbol for the wave propagator with $K = 0$ (Neumann boundary conditions) coincides with $e_\pm$ (Robin boundary conditions).
\end{prop}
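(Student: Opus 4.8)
The plan is to track the principal symbol of Chazarain's parametrix $\widetilde{S_R}$ through the reflection process and show that, at leading order, the Robin boundary condition contributes nothing new compared to Neumann. First I would recall the standard fact that the free propagator $S_0$ on $\mathbb{R}^2$, solving $u_{tt}-\Delta u = 0$ with $u(0,\cdot)=0$, $u_t(0,\cdot) = \delta_{x_0}$, is a Lagrangian distribution whose principal symbol on $\Gamma_\pm^0$ one computes from Kirchhoff's formula (or the $\tau$-Fourier representation $\frac{1}{2\pi}\int e^{it\tau}\frac{\sin t|\xi|}{|\xi|}\,d\xi$): expressing $S_0$ as an oscillatory integral with phase $\Theta_0 = (t - |x-y|)\tau$ or equivalently $\langle x-y,\xi\rangle + t\tau$ on $\tau = \pm|\xi|$, the amplitude is $\frac{1}{2\tau i}$ to leading order, and pushing forward the Leray half-density $|dC_{\Theta_0}|^{1/2}$ under $\iota_{\Theta_0}$ gives $e_\pm^0 = \frac{1}{2\tau i}|dt\wedge dy\wedge d\eta|^{1/2}$ on $\Gamma_\pm^0$, since $(t,y,\eta)$ are global coordinates on the critical set and the geodesic flow pushes this measure forward without a Jacobian.

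Next I would carry out the induction on the number of reflections $j$. The key point is the symbol calculus for the reflection step: restricting $S_{j-1}$ to the boundary is a pullback under the (transversal, away from glancing) embedding $\d\Omega \hookrightarrow \mathbb{R}^2$, which multiplies half-densities by the square root of a boundary Jacobian; the reflection operator $\widehat{\ }$ sends $(x,\xi)\mapsto \widehat{(x,\xi)}$, a symplectomorphism of $T^*_{\d\Omega}\mathbb{R}^2$ that identifies incoming and outgoing covectors; and then one propagates again by the free flow. To make $S_j$ satisfy the Robin condition $(\d_\nu - K)u = 0$ at the boundary to top order, the reflected term's symbol must cancel the normal-derivative mismatch of the incoming term. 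The crucial observation — already exploited for the $\delta$-trace estimate in the Background — is that $\d_\nu - K$ and $\d_\nu$ have the same principal symbol (the term $Ku$ is order zero, lower than $\d_\nu u$ which is order one), so the leading-order transport/matching equation for the reflected symbol is identical to the Neumann case, and in both cases one simply gets the incoming symbol reflected, with the same $\frac{1}{2\tau i}$ normalization and the same half-density $|dt\wedge dy\wedge d\eta|^{1/2}$ transported along the broken bicharacteristic. The $K$-dependence enters only in the subprincipal symbol, which Proposition \ref{symbol prop} does not assert anything about. Summing over $j$ and $\pm$ (the sum being locally finite as noted after \eqref{Chazarain sum}) then gives $e_\pm = \frac{1}{2\tau i}|dt\wedge dy\wedge d\eta|^{1/2}$ on all of $\Gamma$.

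The main obstacle I anticipate is bookkeeping the half-densities correctly through the chain pullback-to-boundary $\to$ reflection $\to$ push-forward-by-flow, making sure the boundary Jacobian factors introduced by restriction are exactly undone when one flows back off the boundary, so that in the coordinates $(t,y,\eta)$ — which parametrize every $\Gamma_\pm^j$ — the density is literally $|dt\wedge dy\wedge d\eta|^{1/2}$ with no residual factor; this is where a clean choice of the phase function for each $S_j$ (the one Chazarain uses, or the explicit billiard-adapted one developed later in Section \ref{Computing the singularity in elliptical polar coordinates}) pays off, since the Leray measure on its critical set is manifestly the coordinate measure in $(t,y,\eta)$. A secondary point to handle carefully is the claim that the Robin and Neumann principal symbols \emph{coincide} rather than merely agree up to a nonzero factor: one must check that the reflection coefficient is $+1$ at leading order in both cases (for Dirichlet it would be $-1$), which follows because the symbol of $\d_\nu$ — being purely the normal-covector component — produces the same sign in the matching equation for Robin as for Neumann. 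I would defer Maslov-index contributions to Section \ref{Maslov} as the statement already does, and note that the $K=0$ comparison is then immediate since the entire leading-order construction never saw $K$.
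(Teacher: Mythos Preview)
Your proposal is correct and rests on the same two facts the paper uses: the initial data determine the symbol on $\Gamma_\pm^0$ to be $\frac{1}{2\tau i}|dt\wedge dy\wedge d\eta|^{1/2}$, and the boundary operator $\d_\nu - K$ has the same principal symbol as $\d_\nu$, so the reflected symbol matches the incoming one exactly as in the Neumann case.

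The organizational difference is that the paper works directly with the four symbol equations implied by $\Box\widetilde{S_R}=0$, the two initial conditions, and $(rN-rK)\widetilde{S_R}=0$: the first says $\mathcal{L}_H e_\pm = 0$ (flow invariance), the second and third combine algebraically to give $\sigma_0\circ e_\pm = \frac{1}{2\tau i}\sigma_{Id}$, and the fourth reduces (since $K$ is order zero) to $\langle\lambda,\nu\rangle\sigma_r\circ e_\pm(\lambda) = \langle\lambda,\nu\rangle\sigma_r\circ e_\pm(\widehat{\lambda})$, forcing the direct and reflected symbols to agree on the boundary. This sidesteps the restriction--reflection--propagation Jacobian bookkeeping you flagged as the main obstacle: flow invariance plus boundary matching immediately propagate the $t=0$ symbol to every $\Gamma_\pm^j$ without ever writing down a boundary Jacobian. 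Your inductive tracing through Chazarain's construction would of course reach the same answer, but the symbol-equation route is shorter.
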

\begin{proof}
As in \cite{Ch76} and \cite{HeZe12}, denote by $\sigma_0$ the symbol of the restriction to $t = 0$, $\sigma_r$ the symbol of the boundary restriction operator $r$, and $\sigma_B$ the symbol of $rN - K \in \Psi^1$. Here, $N$ is an extension of the unit normal vector field on $\d \Omega$ to a tubular neighborhood of the boundary acting as a differential operator and $K$ is again a Robin function. We have the following implications:
\begin{align}\label{symbol formulas}
\begin{split}
\Box \widetilde{S_R} &= 0 \implies \mathcal{L}_{H} e_\pm = 0,\\
\widetilde{S_R} \big|_{t = 0} &= 0 \implies \sigma_0 \circ e_+ + \sigma_0 \circ e_- = 0,\\
\frac{d}{dt} \widetilde{S_R} \big|_{t = 0} &= Id \implies \tau \sigma_0 \circ e_+ - \tau \sigma_0 \circ e_- = \frac{1}{i}\sigma_{Id},\\
(rN - r K) \widetilde{S_R} &= 0 \implies \sigma_B \circ e_\pm = \sigma_B \circ e_\pm\big|_{A_\pm^j} + \sigma_B \circ e_\pm \big|_{\widehat{A}_\pm^j} = 0.
\end{split}
\end{align}
The first assertion in \eqref{symbol formulas} follows from Theorem 5.3.1 of \cite{DuHo72} and the remaining formulas are clear. At the boundary, the symbol $\sigma_B$ is given by 
$$
\langle \lambda(y,\eta), \nu_y \rangle \sigma_r
$$
on $\Gamma_{\d \Omega} \circ A_\pm^j$ and 
$$\langle \widehat{\lambda(y,\eta)}, \nu_y \rangle \sigma_r = -\langle \lambda(y,\eta), \nu_y \rangle \sigma_r$$ 
on $\Gamma_{\d \Omega} \circ \widehat{A}_\pm^j$. The symbol of $r K$ doesnt appear since multiplication by $K$ is a $\Psi$DO of order $0$ while, $N \in \Psi^1$. Hence, the fourth equation in \eqref{symbol formulas} implies that on the boundary, we have
\begin{align}\label{boundary symbol}
\langle \lambda(y,\eta), \nu_y \rangle \sigma_r \circ e_\pm(\lambda(y,\eta))  - \langle \lambda(y,\eta), \nu_y \rangle \sigma_r\circ e_\pm(\widehat{\lambda(y,\eta)}) = 0.
\end{align}
Note that in equation \eqref{boundary symbol}, both sides involve the composition $\sigma_r \circ e_\pm$. The formula for the principal symbol of the composition of FIOs is quite complicated, but is discussed more thoroughly in \cite{GuMe79b}, \cite{HeZe12}, \cite{DuGu75} and \cite{DuHo72}. Since $\langle \lambda(y,\eta), \nu_y \rangle$ is nonvanishing, equation \eqref{boundary symbol} tells us that the direct and reflected symbols coincide on the boundary. Multiplying the second equation in \eqref{symbol formulas} by $\tau$ and adding/subtracting it to the third equation gives
$$
\sigma_0 \circ e_\pm = \frac{1}{2 \tau i} \sigma_{Id}
$$
in the interior. It is elementary to see that $\sigma_{Id}$ is the canonical half density $|dt \wedge dy \wedge d\eta|^{1/2}$. The first equation in \eqref{symbol formulas} implies that the symbol is invariant under geodesic flow, so the claim follows on $\Gamma_{\pm}^1$. In fact, since we have already noted that the direct and reflected symbols coincide over the boundary, the fact that $\Gamma_{\pm}^j$ is the flowout of $\widehat{A}_\pm^j$ then implies that the claim extends to all $\Gamma_\pm^j$.
\end{proof}

\begin{rema}
Chazarain's parametrix actually computes the full symbol by solving successive transport equations and Borel summing the terms. The full proof and explicit computation of the symbol can be found in the original French paper \cite{Ch76}. The actual solution operator $S_R$ could be obtained from $\widetilde{S_R}$ by adding correction terms via Duhamel's principle. However, we only need the principal symbol in our calculation. In \cite{GuMe79b}, a more general situation is treated in which both $M = \Omega \times \{t = 0\}$ and $M' = \d \Omega \times \R$ are nonglancing, noncharacteristic hypersurfaces for the wave propagator. A Fourier integral operator is then constructed iteratively to solve the localized hyperbolic pseudodifferential equation.
\end{rema}

We now make precise the notion of microlocalized FIOs. Recall Theorem \ref{caustics} in Section \ref{Billiards}, which provides an ample number of caustics having simple length in any neighborhood of $|\d \Omega|$. Hence, for $j \in \Z$ large and positive, we can consider periodic orbits having simple length $T_j$, making a single rotation and precisely $j$ reflections at the boundary. We would like to microlocalize $S_R$ near orbits of such a simple length $T_j$. Let $\chi_1(t)$ be a smooth cutoff function which is identically equal to $1$ on an open neighborhood of $T_j$ and vanishes in a neighborhood of all other $T \in \text{Lsp}(\Omega)$. As we remarked above, each propagator $S_j$ has canonical relations $\Gamma_{\pm}^j$. Denote by $\chi_2$ a smooth cutoff function which is identically equal to $1$ on $\cup_\pm \Gamma_\pm^j$ and is conic in the fiber variables $\tau, \xi$ and $\eta$. Quantizing $\chi_2$ gives a pseudodifferential operator with wavefront set contained in support of $\chi_2$. For a reference, see Chapter 18 of \cite{Ho385}. We call such an operator a microlocal cutoff on $\Gamma_\pm^j$. The composition $\chi_1(t)\chi_2(t,x,y, D_t, D_x, D_y) S_R$ is then smoothing away from the periodic orbits of rotation number $1/j$. Since $T_j$ was assumed to be simple, the trace of the above composition is equal to the wave trace modulo $C^\infty$ in a neighborhood of $T_j$.

\subsection{Computing the singularity in elliptical polar coordinates}\label{Computing the singularity in elliptical polar coordinates}
In the previous section, we reviewed Chazarain's parametrix and computed the principal symbol and canonical relation for the wave propagator. In contrast to the methods employed in \cite{HeZe12}, we now want to cook up an oscillatory integral such that microlocally near $\Gamma_\pm^j$,
$$
S_j(t,x,y) = \int_{-\infty}^\infty e^{i\Theta(t,\tau, x,y)} { \sigma}(\tau, x,y)\,d\tau + L.O.T.
$$
where $S_j$ is the $j^{th}$ term in Chazarain's parametrix corresponding to a wave with $j$ reflections. Here, $L.O.T$ denotes lower order terms in the sense of Lagrangian distributions. Due to the presence of different Maslov factors for $\pm \tau > 0$ (see Sections \ref{Fourier Integral Operators} and \ref{Maslov}), it is actually more convenient to find operators
$$
S_j^\pm(t,x,y) = \int_{0}^\infty e^{i\Theta_\pm(t,\tau, x,y)} { \sigma_\pm}(\tau, x,y)\,d\tau,
$$
so that $S_j = S_j^+ + S_j^-$ and the phase functions associated to $S_j^\pm$ paramaterize $\Gamma_{+}^j$ and $\Gamma_-^j$ individually. We first find suitable phase functions $\Theta$ parametrizing $\Gamma_\pm^j$, which we can do only after learning more about elliptical billiards. The following geometric description of almost periodic orbits in the ellipse is crucial:
\begin{lemm}{\label{phase function}}
For $j \in \Z$ sufficiently large and any two points $x,y \in \text{int} \Omega$ near the diagonal of the boundary, there exist precisely four distinct, broken geodesics of $j$ reflections making approximately one counterclockwise rotation, emanating from $x$ and terminating at $y$. Similarly, there exist four such orbits in the clockwise direction.
\end{lemm}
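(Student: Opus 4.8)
The plan is to use integrability of the elliptic billiard to reduce the question to counting solutions of a single transcendental equation, organized by discrete ``sign'' choices. First I would localize the caustic parameter: by Proposition \ref{caustics} and Section \ref{Elliptical billiards}, the periodic orbits of rotation number $1/j$ are exactly the billiard trajectories whose links are tangent to the confocal ellipse $C_{\lambda_j^2}$, and $\lambda_j^2\to 0$ as $j\to\infty$, so these caustics creep to $\d\Omega$. If $x,y$ approach $\d\Omega$ along the diagonal, then any broken geodesic with $j$ reflections making one counterclockwise rotation must limit to such a periodic orbit; hence, by continuity together with compactness of the relevant piece of phase space, it suffices to search among broken geodesics whose first link lies on a line tangent to a confocal ellipse $C_Z$ with $Z$ in a fixed small neighborhood of $\lambda_j^2$, provided $j$ is large and $x,y$ are close enough to $\d\Omega$. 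This is exactly where ``$j$ sufficiently large'' and ``near the diagonal of the boundary'' are used.

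Next comes the parametrization. A broken geodesic $\gamma$ of the desired type is pinned down by three data: (i) the confocal ellipse $C_Z$ to which its links are tangent — by confocality, once the supporting line of the first link is fixed, $C_Z$ is the unique member of the confocal family tangent to that line, so $Z$ is the only continuous parameter; (ii) a sign $\sigma_x\in\{+,-\}$ recording which of the two lines through $x$ tangent to $C_Z$ carries the first link (there are exactly two, since $x$ lies strictly outside $C_Z$ once $x$ is close enough to $\d\Omega$ and $Z$ is close to $\lambda_j^2$); and (iii) a sign $\sigma_y\in\{+,-\}$ for the analogous choice at $y$ on the last link. Given $(Z,\sigma_x)$, the first link determines a footpoint $q_1=q_1(Z,\sigma_x)\in\d\Omega$, and the requirement that $\gamma$ reflect exactly $j$ times with winding number one and then arrive at $y$ along the $\sigma_y$-tangent becomes the single scalar equation
\[
\beta^{j}\big(q_1(Z,\sigma_x)\big)=q'(Z,\sigma_y),
\]
where $q'(Z,\sigma_y)\in\d\Omega$ is the footpoint of the $\sigma_y$-tangent to $C_Z$ through $y$, both sides read inside the $\beta$-invariant curve of caustic $C_Z$. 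On that invariant curve $\beta$ is conjugate to the rotation by $2\pi\omega(Z)$, and for the ellipse $\omega$ is a strictly monotone analytic function of $Z$ (the relevant Jacobi elliptic function formulas are developed in Section \ref{d omega d theta}). In the angle coordinate the left side is $\theta_1(Z)+2\pi j\,\omega(Z)$ and the right side is $\theta'(Z)$, with $\theta_1,\theta'$ depending only mildly on $Z$, so the $Z$-derivative of the difference equals $2\pi j\,\omega'(Z)+O(1)\neq 0$. Over the interval of $Z$ on which $j\omega(Z)$ sweeps out one full period around $1$ — which is precisely the range realizing ``one rotation with $j$ reflections'' and is nonempty for $j$ large — the difference is strictly monotone and changes by exactly one period, hence has exactly one zero. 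Carrying this out for each of the four sign pairs $(\sigma_x,\sigma_y)$ yields exactly four counterclockwise broken geodesics; one then checks they are pairwise distinct because their caustics, or equivalently their first footpoints, differ. The clockwise case is identical, or follows from the orientation-reversing symmetry $(q,\zeta)\mapsto(q,-\zeta)$ of the billiard.

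The hard part will be the care forced by $x,y$ being interior points rather than points of $\d\Omega$: as $x\to\d\Omega$ the two tangent lines from $x$ to $C_Z$ coalesce onto the tangent line of $\d\Omega$, so the construction degenerates in the limit. One must quantify, in terms of $\dist(x,\d\Omega)$, $\dist(y,\d\Omega)$ and $j$, that the two tangent lines remain genuinely distinct, that the first and last links really do close up after exactly $j$ reflections with winding number one (ruling out $j\pm 1$ reflections, or winding numbers $0$ or $2$), that $x$ and $y$ actually lie outside $C_Z$ throughout, and that the monotonicity interval for $j\omega(Z)$ is long enough to absorb these perturbations. These estimates, combined with the explicit elliptic-function description of the billiard map and of $\omega(Z)$, are what occupies Section \ref{proof of lemma}, where the lemma is broken into several intermediate steps.
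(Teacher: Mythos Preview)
Your approach is essentially the same as the paper's: your sign pair $(\sigma_x,\sigma_y)$ is exactly the paper's $T/N$ classification of the first and last links (equivalently, $\sigma_x$ is the paper's choice of regime $\alpha\gtrless\alpha_x$ and $\sigma_y$ is the choice of first versus second intersection of the final link with $C_{\lambda_y}$), and your monotonicity argument via the dominant term $2\pi j\,\omega'(Z)$ is precisely the content of Lemma~\ref{billiard iterates}. The one noteworthy difference is that you write the scalar equation on $\partial\Omega$ as $q_j(Z)=q'(Z,\sigma_y)$, whereas the paper instead tracks the two intersection points of the last link with the confocal ellipse $C_{\lambda_y}$ through $y$; the paper's formulation forces the extra delicate estimate of Lemma~\ref{big bound} (controlling $\partial\omega_j/\partial\lambda$ and Taylor expanding $D_1,D_2,D_3$), which your boundary-footpoint formulation would largely sidestep, needing only Lemma~\ref{billiard iterates} together with the $O(1)$ bounds on $\partial_Z q_1$ and $\partial_Z q'$ that you flag as the ``hard part.''
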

\noindent We first explain what is meant by approximately one roation. Let $\xi \in S_x^* \Omega$ be one of the $4$ covectors corresponding to the initial condition of a counterclockwise orbit described in Lemma \ref{phase function}. Denote by $\widehat{x} = \pi_1 g^{t_1^+}(x,\xi)$ the first point of reflection at the boundary ($\pi_1$ is projection onto the first factor) and by $\widehat{y}$ the $(j+1)$st point of reflection at the boundary before the orbit reaches $y$. If $x, y$ are $O(j^{-1})$ close to the diagonal of the boundary, then $|\widehat{x} - \widehat{y}| = O(j^{-1})$ (see Section \ref{proof of lemma}). Also let $\omega$ be the angle of reflection made by the orbit at $\widehat{x}$ and note that $\widehat{x}, \widehat{y}$ and $\omega$ all depend implicitly on $\xi$. By approximately one counterclockwise rotation, we mean that for each of the initial covectors $\xi \in S_x^*\Omega$ of the $4$ counterclockwise orbits provided by Lemma \ref{phase function}, we have
\begin{equation*}
|\pi_1 \widehat{\beta}^j(\widehat{x}, \omega) - \widehat{y} - \ell| \leq \ell/100.
\end{equation*}
\noindent Here, $\ell = |\d \Omega|$ and $\widehat{\beta}$ is the lift of the billiard map to the closure of the universal cover $\R \times [0,\pi]$ as described in Section \ref{Billiards}. The choice of $\ell/100$ is somewhat arbitrary, but having $\ell$ in the numerator allows for scale invariance and finding the optimal constant in the denominator is irrelevant for our purposes. The notion of approximately one clockwise rotation is defined similarly. We relegate the proof this theorem to Section \ref{proof of lemma}, as it uses formulas we haven't yet discussed and is of independent interest. The proof of Lemma \ref{phase function} actually provides more information. Of the four counterclockwise orbits emanating from $x$, two of them become tangent to a confocal ellipse before making a reflection at the boundary. We denote these orbits by $T$ orbits (for tangency) and call their first links $T$ links. The other two orbits make a reflection at the boundary before becoming tangent to a confocal ellipse and we call these $N$ orbits (for nontangency) with first link called an $N$ link. Within either $T$ or $N$ category for the first link, the final link of one of the orbits reaches $y$ before becoming tangent to a confocal ellipse (an $N$ link) and the other has a point of tangency before reaching $y$ (a $T$ link). In this way, we obtain four types of counterclockwise orbits from $x$ to $y$, which we denote by $TT$, $TN$, $NT$, and $NN$. See Figure \ref{configurations} for an example with $j=4$. The same characterization also applies to the four clockwise orbits in Lemma \ref{phase function}, which can be obtained by reflecting the domain through the vertical axis, finding all orbits making approximately one counterclockwize rotation from $x$ to $y$, and then reflecting these orbits back through the vertical axis (see Section \ref{proof of lemma} for a more detailed discussion). These configurations will be important in determining which limiting orbits give periodic trajectories of precisely $j$ reflections as $(x,y) \to \Delta \d \Omega$.
\begin{figure}
\includegraphics[scale=0.25]{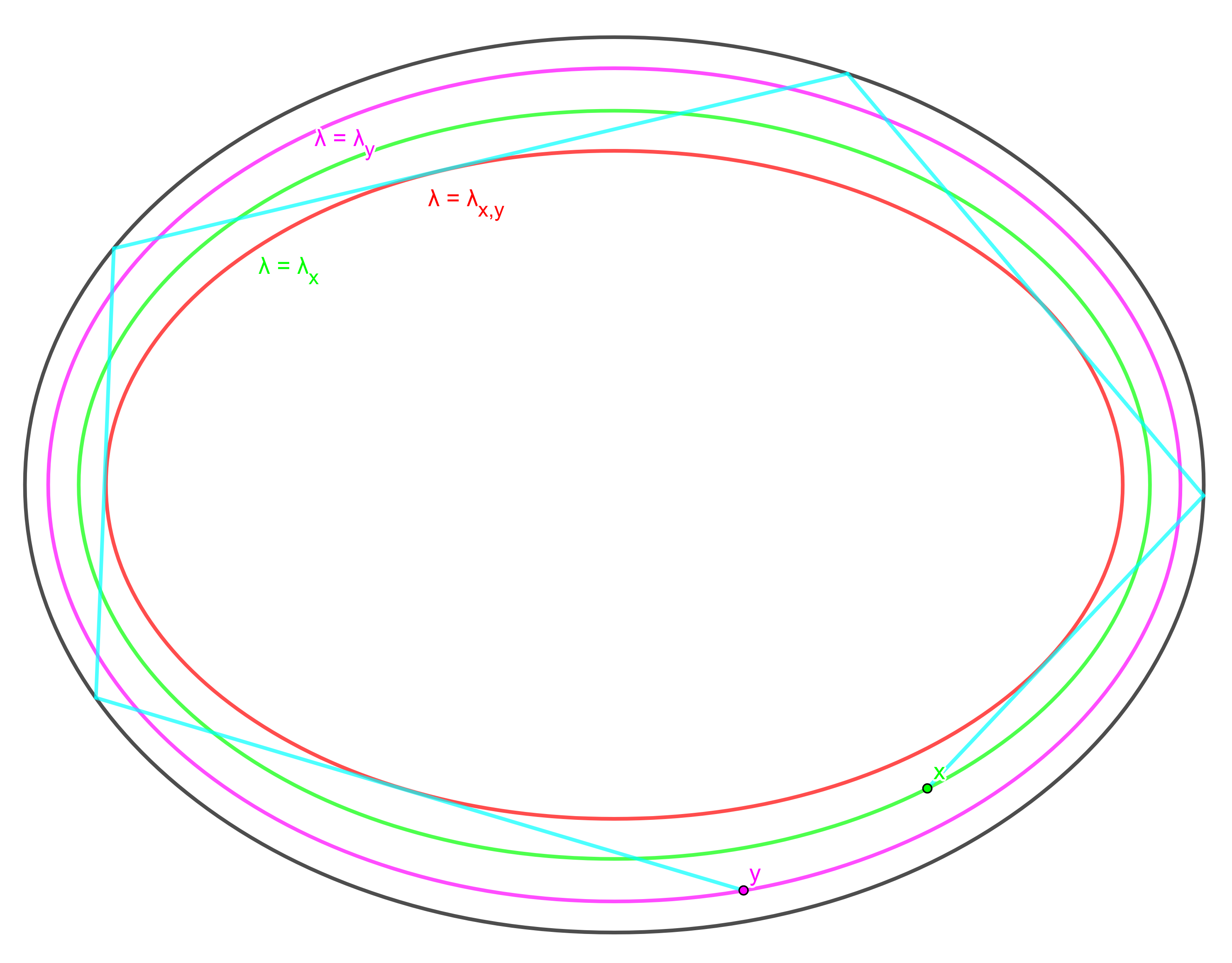}
\includegraphics[scale=0.25]{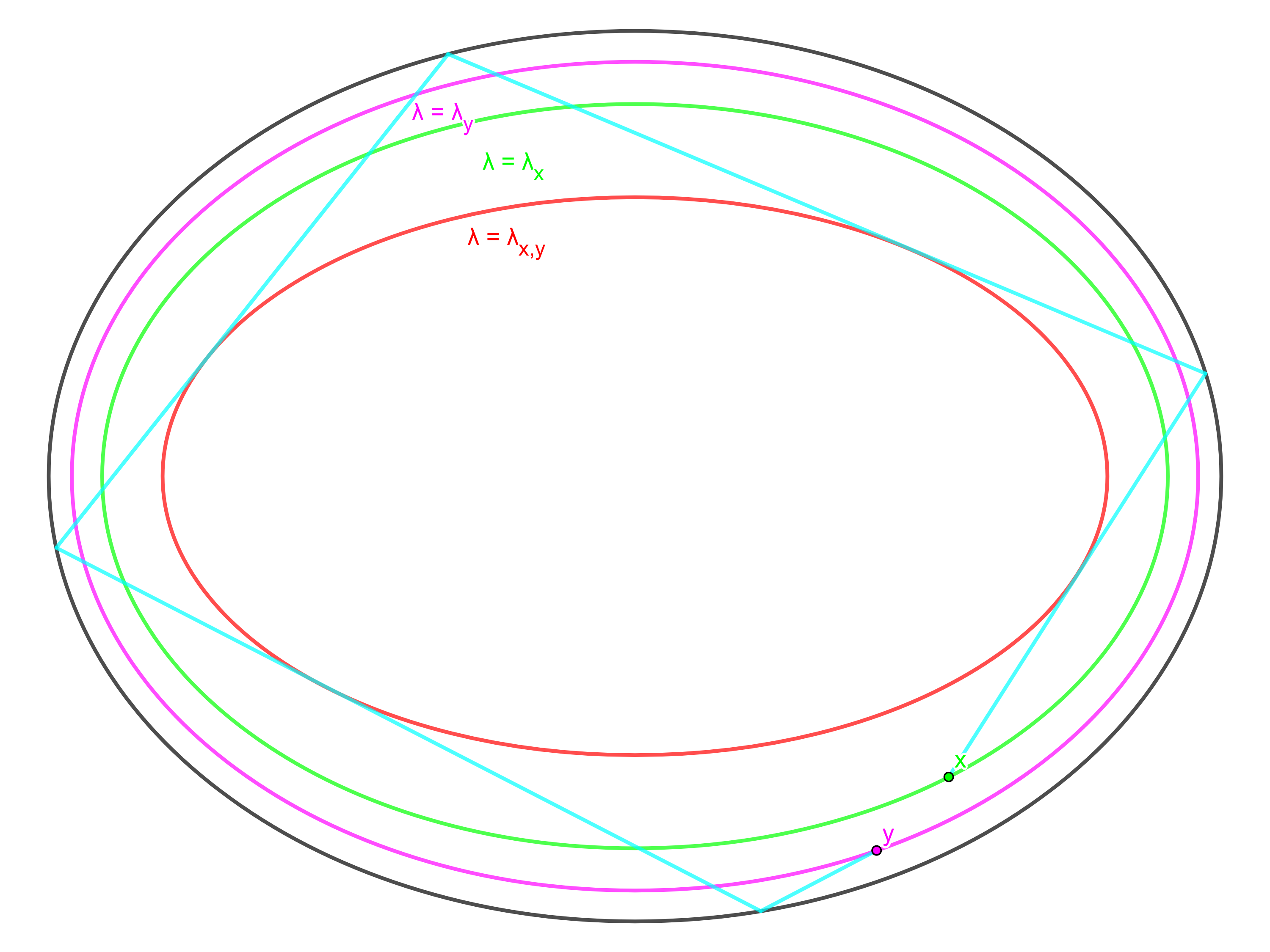}
\includegraphics[scale=0.25]{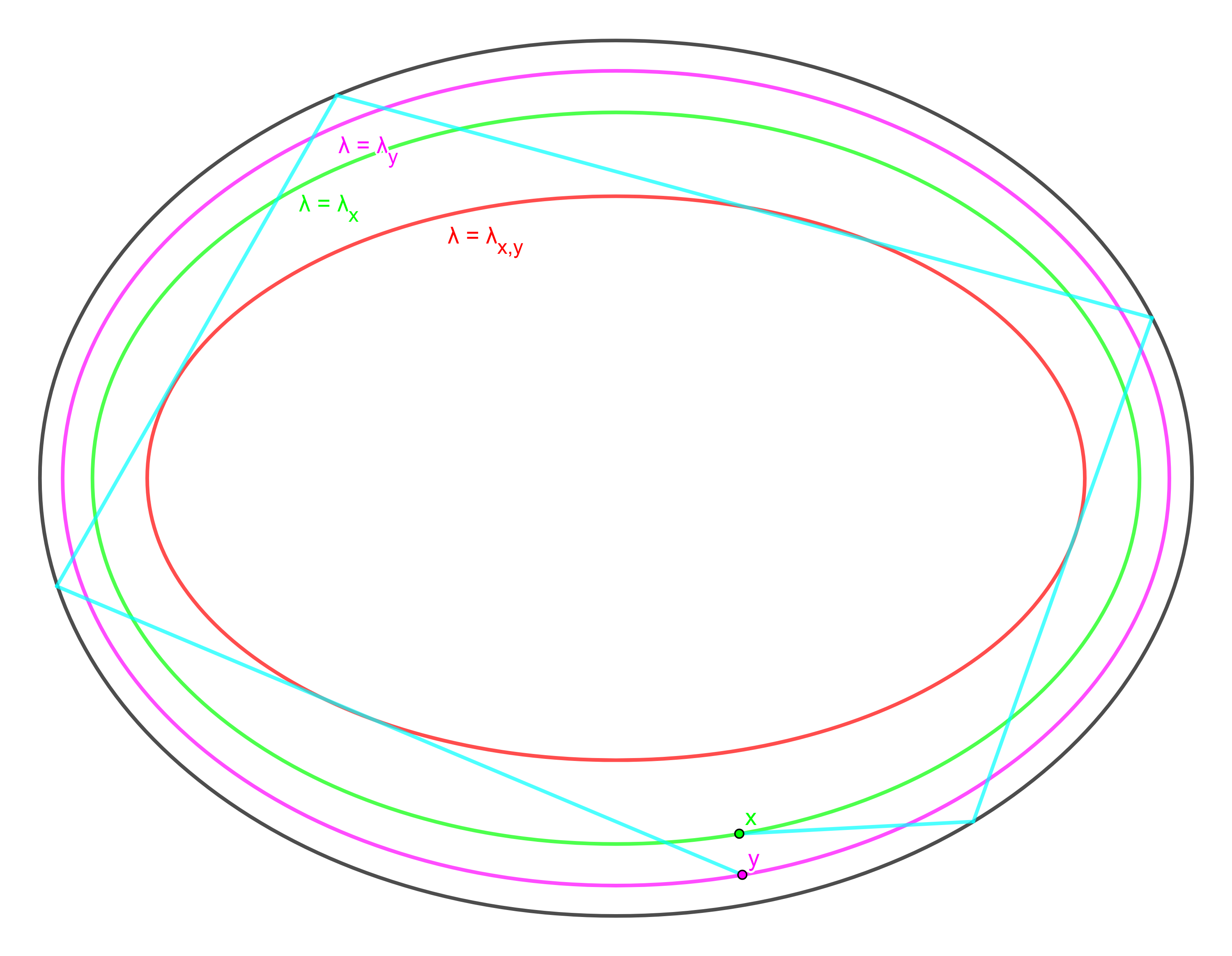}
\includegraphics[scale=0.25]{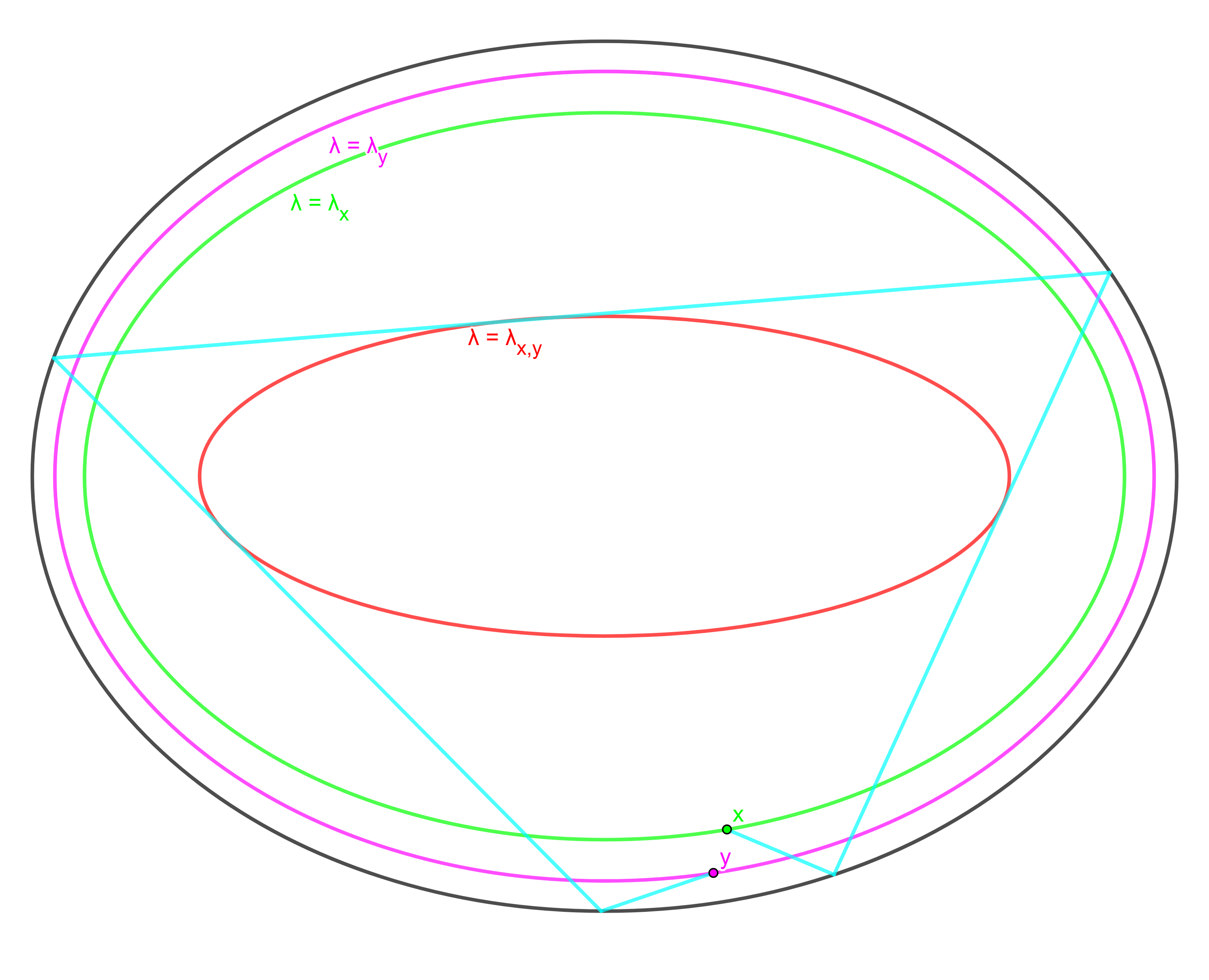}
\caption{Counterclockwise orbit configurations TT, TN, NT, and NN corresponding to $j = 4$. The green and pink curves are the confocal ellipses on which $x$ and $y$ lie, respectively. The red curve is the caustic of parameter $\lambda_{xy}$ to which the billiard orbit is tangent.}
\label{configurations}
\end{figure}
\begin{def1}\label{psi jk}
For $1 \leq k \leq 8$, we set $\Psi_j^k(x,y)$ to be a branch of the length functional corresponding to one of the orbits in Lemma \ref{phase function}. It depends only on $x,y, j$ and $k$. We use the convention that the indices $1 \leq k \leq 4$ correspond to the counterclockwise orbits $TT, TN, NT, NN$ and the indices $5 \leq k \leq 8$ correspond to their reflections about their clockwise counterparts (reflections of the first four orbits through the vertical axis). 
\end{def1}
The author learned of a similar function in \cite{MaMe82} (page 492), where its restriction to the boundary is defined. In such a case, i.e. if $x, y \in \d \Omega$, it is shown in \cite{GuMeCohomological}, \cite{MaMe82} and \cite{Popov1994} that only a single counterclockwise orbit of $j$ reflections exists between the boundary points if they are sufficiently close and $j$ is sufficiently large. Upon inspection of the geometric proof given in Section \ref{proof of lemma}, one can actually see that as $x$ and $y$ approach the diagonal of the boundary from the interior, the corresponding orbits coalesce and converge to the orbits described in \cite{MaMe82}. However, the limiting orbits may have a different number of reflections (see proof of Lemma \ref{Sj lemma}). We define phase functions $\Theta_{j, \pm}^k$ by the formula
$$
\Theta_{j, \pm}^k(t,\tau, x,y) = \pm \tau(t - \Psi_j^k(x,y)).
$$
\begin{lemm}\label{parametrization of canonical relations}
The phase functions $\Theta_{j, \pm}^k(t,\tau,x,y)$ are smooth in an open neighborhood of the diagonal of the boundary and locally parametrize the canonical graphs $\Gamma_\pm^j$. In particular, both $\Gamma_+^j$ and $\Gamma_-^j$ are unions of $8$ connected components, which we denote by $\Gamma_\pm^{j,k}$.
\end{lemm}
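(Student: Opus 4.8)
The plan is to verify, in turn, that each branch $\Psi_j^k$ is smooth on a neighborhood $U$ of the diagonal of $\d\Omega$, that $\Theta_{j,\pm}^k$ is a nondegenerate phase function with connected critical set, and that the Lagrangian it parametrizes coincides with one connected component of $\Gamma_\pm^j$; the count of orbits in Lemma \ref{phase function} then yields the eight components. For the first point, fix $j$ large. By Lemma \ref{phase function} (and the refined statement proved in Section \ref{proof of lemma}) there is a neighborhood $U$ of the diagonal of $\d\Omega$ such that for $(x,y)\in U$ there are exactly eight geometrically distinct broken geodesics of $j$ reflections and winding number one joining $x$ and $y$ — the four counterclockwise types $TT,TN,NT,NN$ together with their four reflections through the vertical axis — and each such orbit is tangent to a confocal ellipse of parameter $\lambda_{xy}$ which stays bounded away from $\d\Omega$. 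Hence all $j$ reflection angles remain bounded away from $0$ and $\pi$, so every reflection is transversal; the billiard map $\beta$ and its iterates are therefore smooth near each orbit, and the implicit function theorem makes the reflection points $q_1(x,y),\dots,q_j(x,y)$ smooth functions of $(x,y)\in U$, with the eight orbits staying distinct throughout $U$. Since $\Psi_j^k(x,y)=|x-q_1|+\sum_{i=1}^{j-1}|q_i-q_{i+1}|+|q_j-y|$ is a sum of Euclidean distances between pairwise distinct points, $\Psi_j^k\in C^\infty(U)$, and consequently $\Theta_{j,\pm}^k(t,\tau,x,y)=\pm\tau\,(t-\Psi_j^k(x,y))$ is smooth on $\R_t\times\{\pm\tau>0\}\times U$.

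The phase function $\Theta_{j,\pm}^k$ has a single fiber variable $\tau$, in which it is homogeneous of degree one, and on its critical set $C_{j,\pm}^k=\{\partial_\tau\Theta_{j,\pm}^k=0\}=\{t=\Psi_j^k(x,y)\}$ one has $d_{(t,\tau,x,y)}\big(\partial_\tau\Theta_{j,\pm}^k\big)=\pm\big(dt-d_x\Psi_j^k-d_y\Psi_j^k\big)$, whose $dt$-component is $\pm1\neq0$. Thus $\Theta_{j,\pm}^k$ is a nondegenerate phase function and $\iota_{\Theta_{j,\pm}^k}$ immerses $C_{j,\pm}^k$ onto a conic Lagrangian $\Lambda_\pm^{j,k}$, which is connected since $C_{j,\pm}^k\cong\{(x,y)\in U,\ \pm\tau>0\}$ is connected (after shrinking $U$ if necessary).

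It remains to identify $\Lambda_\pm^{j,k}$ with a component of $\Gamma_\pm^j$. Writing out the generating map,
\[
\Lambda_\pm^{j,k}=\big\{\big(t,\pm\tau,\ x,\mp\tau\,d_x\Psi_j^k(x,y),\ y,\mp\tau\,d_y\Psi_j^k(x,y)\big):\ t=\Psi_j^k(x,y),\ \tau>0\big\}.
\]
The key input is the first variation of arclength: since the $j$ interior reflection points of a broken geodesic are critical under the law of reflection, varying only the endpoints shows that $d_x\Psi_j^k(x,y)$ is the unit covector at $x$ pointing along the last link away from $y$, and $-d_y\Psi_j^k(x,y)$ is the unit covector at $y$ pointing along the first link toward $x$. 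Reading Chazarain's formula $\Gamma_\pm^j=\{(t,\tau,\ g^{\pm(t-T_\pm^j(y,\eta))}\widehat{\lambda_\pm^j(y,\eta)},\ y,\eta):\tau=\pm|\eta|\}$ as the $j$-reflection broken-bicharacteristic flowout of $\{t=0\}$, the choice $\eta=\mp\tau\,d_y\Psi_j^k(x,y)$ launches exactly the $k$-th broken geodesic, which after its $j$ reflections reaches $x$ at time $t=T_\pm^j(y,\eta)+\big(t-T_\pm^j(y,\eta)\big)=\Psi_j^k(x,y)$ carrying covector $\mp\tau\,d_x\Psi_j^k(x,y)$; modulo the usual sign conventions relating $WF$ to $WF'$ and the two sheets $\pm|\eta|$ of the characteristic variety, this gives $\Lambda_\pm^{j,k}\subset\Gamma_\pm^j$. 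Conversely, over the region $\{(x,y)\in U,\ |t-T_j|<\delta\}$ every point of $\Gamma_\pm^j$ comes from a $j$-reflection broken geodesic between $y$ and $x$ of length $t$; since $t\approx T_j$ and $T_j\to|\d\Omega|$ forces winding number one (an orbit of winding $p\geq2$ with $j$ reflections has length $\approx p|\d\Omega|\gg T_j$), Lemma \ref{phase function} supplies exactly eight such geodesics, one for each branch $\Psi_j^1,\dots,\Psi_j^8$. Hence over this region $\Gamma_\pm^j=\bigsqcup_{k=1}^8\Lambda_\pm^{j,k}$, the sheets being disjoint because the eight orbits, and therefore their momenta, are pairwise distinct; setting $\Gamma_\pm^{j,k}:=\Lambda_\pm^{j,k}$ completes the proof.

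The step I expect to require the most care is this last identification: reconciling the orientation and sign conventions of the FIO-theoretic Lagrangian of $\Theta_{j,\pm}^k$ with Chazarain's canonical relation $\Gamma_\pm^j$ (the $\pm|\eta|$ modes, and the Maslov contributions deferred to Section \ref{Maslov}), and confirming exhaustion — that the eight sheets account for all of $\Gamma_\pm^j$ over the relevant base set with nothing left over. Granting Lemma \ref{phase function} and its refinement, the smoothness and nondegeneracy are routine.
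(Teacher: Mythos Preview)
Your proof is correct and follows essentially the same route as the paper: both compute $d_x\Psi_j^k$ and $d_y\Psi_j^k$ as the unit incident/outgoing directions of the billiard trajectory and verify that $\iota_{\Theta_{j,\pm}^k}$ lands in $\Gamma_\pm^j$, with the eight components coming from Lemma \ref{phase function}. One small slip: you have the first and last links swapped in your description of the gradients (in fact $d_x\Psi_j^k=(x-q_1)/|x-q_1|$ points backward along the \emph{first} link, and $d_y\Psi_j^k=(y-q_j)/|y-q_j|$ points forward along the \emph{last} link), but this does not affect the argument.
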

\begin{proof}
For any $x,y \in \Omega$ let
\begin{align}\label{length functional}
\begin{cases}
L_{x,y}: \d \Omega^j \to \R_+\\
L_{x,y}(q_1, q_2, \cdots q_j) = |x - q_1| + \left\{ \sum_{m = 2}^j |q_m - q_{m-1}| \right\} + |q_j - y|
\end{cases}
\end{align}
denote the length functional. We first show that billiard trajectories from $x$ to $y$ are in one to one correspondence with critical points of \eqref{length functional} with respect to $q \in \d \Omega^j$. Let $g \in C^\infty(\R^2)$ be a defining function for $\d \Omega$ and consider $q$ as a variable in $\R^2 \times \cdots \times \R^2 = \R^{2j}$ rather than $\d \Omega^{j}$. If $q$ is a critical point of \eqref{length functional}, then as in the method of Lagrange multipliers, by setting $x = q_0$ and $y = q_{j+1}$, we find that for $1 \leq m \leq j$, there exists $\lambda_m \in \R$ such that
$$
\frac{\d L_{x,y}}{\d q_m} = \frac{q_m - q_{m-1}}{|q_m - q_{m-1}|} + \frac{q_m - q_{m+1}}{|q_m - q_{m+1}|} = \lambda_m \nabla_{q_m} g.
$$
Since $\nabla_{q_m} g \perp \d \Omega$, this implies that the two unit vectors in the formula for $\d_{q_m} L_{x,y}$ have opposite tangential components, which is precisely the condition giving elastic collision at the boundary (angle of incedince equals angle of reflection). Similarly, if this condition is satisfied, then $q$ is a critical point for \eqref{length functional}.\\
\\
We now consider the functions $\Psi_j^k$ in Definition \ref{psi jk}. We have
\begin{align}\label{psi j}
\Psi_j^k(x,y) = |x - q_1^k| + \left\{ \sum_{m = 2}^j |q_m^k - q_{m-1}^k| \right\} + |q_j^k - y|,
\end{align}
where $q_m^k(x,y)$ is the $m$th impact point on the boundary for the billiard trajectory corresponding to $\Psi_j^k$. As opposed to the $q_m$ in the length functional \eqref{length functional}, $q_m^k$ will in general have a nontrivial dependence on $x$ and $y$. Differentiating \eqref{psi j} in $x$, we obtain
\begin{align}\label{psi j derivative}
\frac{\d \Psi_j^k}{\d x_i} = \frac{x - q_1^k}{|x - q_1^k|} \cdot \frac{\d}{\d x_i} (x - q_1^k) + \left\{\sum_{m =2}^j \frac{q_m^k - q_{m-1}^k}{|q_m^k - q_{m-1}^k|} \cdot \frac{\d}{\d x_i}(q_m^k - q_{m-1}^k) \right\} + \frac{q_j^k - y}{|q_j^k - y|} \cdot \frac{\d}{\d x_i} (q_j^k - y).
\end{align}
Since for each $x, y \in \Omega$, the path defined by $(x,q^k,y)$ corresponds to a billiard trajectory, we see that all of the terms except the first telescope in \eqref{psi j derivative}. Hence,
\begin{align}\label{psi j x derivative}
d_x \Psi_j^k = \frac{x - q_1^k}{|x - q_1^k|}.
\end{align}
Similarly, differentiating \eqref{psi j} in $y$, we obtain
\begin{align}\label{psi j y derivative}
d_y \Psi_j^k = \frac{y - q_j^k}{|y - q_j^k|}.
\end{align}
Geometrically, these gradients are the incident and (reflected) outgoing unit directions of the billiard trajectories described in Lemma \ref{phase function}.\\
\\
We now consider the maps
\begin{align}\label{parametrize Gamma}
\iota_{{\Theta_{j, \pm}^k}}: (t, \tau, x, y) \mapsto (t,\tau,x, d_x \Theta_j^k,y, -d_y\Theta_j^k) = (t, \tau, x, -\tau d_x \Psi_j^k,y, \tau d_y \Psi_j^k)
\end{align}
on the critical set $C_{\Theta_{j, \pm}^k} = \{ t - \Psi_j^k = 0 \}$. Inserting formulas \eqref{psi j x derivative} and \eqref{psi j y derivative} into \eqref{parametrize Gamma} and comparing with the canonical graphs
$$
\Gamma_\pm^j = \begin{cases}
(t, \tau, g^{\pm t}(y,\eta), y, \eta): \tau = \pm |\eta| & j =0,\\
(t, \tau, g^{\pm (t - T_\pm^j(y,\eta))}\widehat{\lambda_j(y,\eta)}, y, \eta): \tau = \pm |\eta| & j \in \Z \backslash \{0\}
\end{cases}
$$
from Section \ref{Chazarain's Parametrix}, we see that $\iota_{{\Theta_{j, \pm}^k}}: C_{\Theta_{j, \pm}^k} \to \Gamma_\pm^j$ is a local diffeomorphism. Since $1 \leq k \leq 8$, it follows that both $\Gamma_+^j$ and $\Gamma_-^j$ are the unions of $8$ connected components.
\end{proof}
We now want to derive an explicit formula for the principal symbol $e_\pm$ of $S_R$ in coordinates. Referring to formula \eqref{principal symbol of an FIO} for the principal symbol of an FIO, we easily see that in our setting, $dC_{\Theta_{j, \pm}^k} = \mp d\tau \wedge dx \wedge dy$. In Proposition \ref{symbol prop}, we calculated that $e_\pm = \frac{1}{2 \tau i} |dt \wedge dy \wedge d\eta|^{1/2}$. Since we now know that the phase functions $\Theta_{{j, \pm}, \pm}^k(t,\tau,x,y) = \pm \tau(t - \Psi_j^k(x,y))$ locally parametrize the connected components of $\Gamma_\pm^j$, we now want to calculate $a_0$ (as in Section \ref{Fourier Integral Operators}) by changing variables. It is ultimately more convenient to introduce a conformal change of coordinates which is suitable to computing the symbol $e_\pm$ for the ellipse:
\begin{def1}\label{elliptical polar coordinates}
Elliptical polar coordinates are defined on $\R^2 \backslash [-c, c]$ by the equations:
\begin{align*}
x_1= c\cosh \mu \cos \phi, \qquad
x_2 = c\sinh \mu \sin \phi.
\end{align*}
Here, $(\mu, \phi) \in (0,\infty) \times \R/(2\pi\Z)$ and $c = \sqrt{a^2 - b^2}$ is the semifocal distance, i.e. the distance between the origin and a focal point of the ellipse
$$
\Omega = \left\{ (x,y): \frac{x^2}{a^2} + \frac{y^2}{b^2} \leq 1 \right\}.
$$
\end{def1}
It is easy to check that the Euclidean metric in these coordinates is a conformal multiple of $d\mu d\phi$:
$$
dx_1dx_2 = c^2 (\cosh^2 \mu - \cos^2 \phi)d\mu d\phi.
$$
In particular, the vector fields $\d/\d \mu$ and $\d/\d \phi$ are orthogonal at each point. For a fixed $\mu > 0$, the $\phi$ coordinate parametrizes a confocal ellipse of eccentricity $1/\cosh(\mu)$. When projected onto the phase space $B^*\d \Omega$, these curves are precisely the invariant Lagrangian tori for the billiard map. In particular, fixing
$$
\mu = \mu_0 = \cosh^{-1}\left(\frac{a}{\sqrt{a^2 - b^2}}\right)
$$
gives a parametrization of the boundary $\d \Omega$. Similarly, for a fixed $\phi > 0$, the $\mu$ coordinate parametrizes a branch of a confocal hyperbola. The reason we use these coordinates is because up to a conformal factor, tangential differentiation on the ellipse becomes $\d/\d\phi$ while differentiating in the normal direction becomes $\d/\d \mu$. Since the wave kernel $S_R$ is a function of both the $x$ and $y$ variables, we use elliptic coordinates for $y$ as well:
\begin{align*}
y_1= c\cosh \nu \cos \theta, \qquad
y_2 = c\sinh \nu \sin \theta.
\end{align*}
Note that the Leray form is coordinate independent. Hence, without loss of generality, we compute that in elliptical coordinates,
\begin{align*}
dC_{\Theta_{j, \pm}^k} &= \mp d\tau \wedge dx \wedge dy\\
&= \mp a^4 (\cosh^2 \mu - \cos^2 \phi)(\cosh^2 \nu - \cos^2 \theta) d\tau \wedge d\mu \wedge d\phi \wedge d\nu \wedge d\theta.
\end{align*}
On the critical set, we have
\begin{align*}
(t,\tau, \mu, \phi, \xi, \nu, \theta, \eta) = (\Psi_j^k, \tau, \mu, \phi, - \tau d_{\mu,\phi} \Psi_j^k, \nu, \theta, \tau d_{\nu, \theta} \Psi_j^k).
\end{align*}
From now on, we drop the the $j,k$ subscripts and write $\Psi_j^k = \Psi$ so that we may use subscripts to denote derivatives. We have
\begin{align*}
dt &= \Psi_{\mu} d\mu + \Psi_{\phi}d\phi +\Psi_{\nu} d\nu + \Psi_{\theta}d\theta,\\
dy &= a^2(\cosh^2 \nu -\cos^2 \theta) d\nu \wedge d\theta,\\ 
d{\eta_1} &= \Psi_{\nu} d\tau + \tau( \Psi_{\mu \nu} d\mu + \Psi_{\nu \phi} d\phi + \Psi_{\nu \nu} d\nu + \Psi_{\nu \theta} d\theta),\\
d\eta_2 &= \Psi_{\theta} d\tau +  \tau( \Psi_{ \theta\mu} d\mu + \Psi_{\theta \phi} d\phi + \Psi_{ \theta \nu} d\nu + \Psi_{ \theta \theta} d\theta  ).
\end{align*}
Wedging all these terms, we find that
\begin{align*}
dt \wedge dy \wedge d\eta = & \tau a^2 (\cosh^2 \nu - \cos^2 \theta)  ( \Psi_{\mu} \Psi_{ \theta} \Psi_{\nu \phi} + \Psi_{\phi} \Psi_{\nu} \Psi_{ \theta \mu}\\
&- \Psi_{\mu}\Psi_{\nu} \Psi_{\theta \phi} - \Psi_{\phi} \Psi_{\theta} \Psi_{\nu \mu} ) d\tau \wedge d\mu \wedge d\phi \wedge d\nu \wedge d\theta.
\end{align*}
Keeping in mind that all $\Psi$ terms depend on $j$ and $k$, we denote the above factor by
\begin{align}\label{def of Aj}
A_j^k(\mu,\phi,\nu, \theta) = a^2 (\cosh^2 \nu - \cos^2 \theta)  ( \Psi_{\mu} \Psi_{ \theta} \Psi_{\nu \phi} + \Psi_{\phi} \Psi_{\nu} \Psi_{ \theta \mu} - \Psi_{\mu}\Psi_{\nu} \Psi_{\theta \phi} - \Psi_{\phi} \Psi_{\theta} \Psi_{\nu \mu} ).
\end{align}
Then, on each of the canonical relations $\Gamma_\pm^{j,k}$ (see Lemma \ref{parametrization of canonical relations}), we have
\begin{align*}
e_\pm = \frac{|dt \wedge dy \wedge d\eta|^{1/2}}{2\tau i} = \frac{\pm 1}{2 |\tau|^{1/2} i}\left|A_j^k d\tau \wedge d\mu \wedge d\phi \wedge d \nu \wedge d\theta \right|^{1/2}.
\end{align*}
As a result, we have proved:
\begin{theo}\label{parametrix with sum}
Microlocally near $\Gamma_\pm^j$, the following oscillatory integral is a parametrix for $S_j$ in an open neighborhood of $\Delta \d \Omega \subset \Omega \times \Omega$:
\begin{align*}
S_j(t, \mu, \phi,\nu, \theta) = \sum_{\pm, k = 1}^8 \int_{0}^\infty e^{\pm i\tau(t - \Psi_j^k(x,y))} \frac{\pm 1}{2|\tau|^{1/2} i} |A_j^k(\mu, \phi,\nu, \theta)|^{1/2} d\tau + L.O.T.
\end{align*}
We denote the operators in this sum by $S_{j, \pm}^k$ and and also define $S_j^k = S_{j, +}^k + S_{j,-}^k$.
\end{theo}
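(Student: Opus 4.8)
The plan is to assemble the ingredients already established. Chazarain's construction (Section \ref{Chazarain's Parametrix}) furnishes a Lagrangian distribution $\widetilde{S_R}=\sum_j S_j$ with $S_j\in I^{-5/4}(\R\times\R^n\times\R^n,\Gamma_\pm^j)$ agreeing with $S_R$ microlocally away from the glancing set, so it suffices to produce, for each large $j$, an oscillatory integral representing $S_j$ modulo Lagrangian distributions of lower order on a fixed interior neighborhood of $\Delta\d\Omega$. The governing principle is that a Lagrangian distribution attached to a fixed Lagrangian is determined, modulo lower order, by its principal symbol (a half-density on the Lagrangian, twisted by the Maslov bundle). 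Hence the proof splits into: (i) a complete family of nondegenerate phase functions parametrizing $\Gamma_\pm^j$; (ii) the leading amplitude $a_0$ chosen so that \eqref{principal symbol of an FIO} reproduces the symbol $e_\pm$ of Proposition \ref{symbol prop}; and (iii) verification that the resulting integral has order $-5/4$.

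For (i) I would invoke Lemma \ref{phase function}: for $j$ large and $x,y$ in a fixed interior neighborhood of the diagonal of the boundary, there are exactly eight broken geodesics of $j$ reflections making approximately one rotation --- four counterclockwise of types $TT,TN,NT,NN$ and their four clockwise counterparts --- whose length functionals are the branches $\Psi_j^k(x,y)$ of Definition \ref{psi jk}. By Lemma \ref{parametrization of canonical relations} the phase functions $\Theta_{j,\pm}^k(t,\tau,x,y)=\pm\tau(t-\Psi_j^k(x,y))$ are smooth near $\Delta\d\Omega$ and, via the maps $\iota_{\Theta_{j,\pm}^k}$ of \eqref{parametrize Gamma}, locally parametrize the eight connected components $\Gamma_\pm^{j,k}$ of $\Gamma_\pm^j$. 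The point to emphasize is \emph{exhaustiveness}: Lemma \ref{phase function} ensures these eight orbits account for \emph{all} billiard trajectories contributing to $\Gamma_\pm^j$ over the neighborhood, so the finite sum over $1\le k\le 8$ and over $\pm$ recovers all of $S_j$ with nothing missed. Splitting the $\tau$-integral into $\{\tau\ge0\}$ and $\{\tau\le0\}$, i.e. writing $S_j=S_j^++S_j^-$, isolates the two canonical graphs $\Gamma_+^j$ and $\Gamma_-^j$ separately and keeps the Maslov bookkeeping uniform.

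For (ii) I would invert the symbol formula \eqref{principal symbol of an FIO}: since $e_\pm=\tfrac{1}{2\tau i}\,|dt\wedge dy\wedge d\eta|^{1/2}$ on $\Gamma$ by Proposition \ref{symbol prop}, and the Leray density of $\Theta_{j,\pm}^k$ on the critical set $C_{\Theta_{j,\pm}^k}=\{t-\Psi_j^k=0\}$ is $dC_{\Theta_{j,\pm}^k}=\mp\,d\tau\wedge dx\wedge dy$, the amplitude is recovered from $a_0\,|dC_{\Theta_{j,\pm}^k}|^{1/2}=\iota_{\Theta_{j,\pm}^k}^*\,e_\pm$. Passing to elliptical polar coordinates (Definition \ref{elliptical polar coordinates}) in both $x$ and $y$ and substituting $d_x\Psi_j^k$, $d_y\Psi_j^k$ from \eqref{psi j x derivative}--\eqref{psi j y derivative} to pull $dt\wedge dy\wedge d\eta$ back to the critical set, the Jacobian collapses exactly to the factor $A_j^k$ of \eqref{def of Aj}; dividing by $|dC_{\Theta_{j,\pm}^k}|^{1/2}$ in the same coordinates gives $a_0=\tfrac{\pm1}{2|\tau|^{1/2}i}\,|A_j^k(\mu,\phi,\nu,\theta)|^{1/2}$, a symbol homogeneous of degree $-\tfrac12$ in $\tau$. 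With one phase variable ($N=1$), amplitude order $\mu=-\tfrac12$, and ambient dimension $1+2n$, the order convention gives $\mu+\tfrac12 N-\tfrac14(1+2n)=-\tfrac12+\tfrac12-\tfrac54=-\tfrac54$ when $n=2$, matching \eqref{Chazarain sum}. Summing over $\pm$ and $k$ yields the stated formula; the $e^{i\sigma\pi/4}$ Maslov corrections are deferred to Section \ref{Maslov}, as announced, and the indicated remainder is of strictly lower Lagrangian order and can if desired be produced by the usual transport-equation iteration of Chazarain's argument.

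The main obstacle is front-loaded into the dynamical input, Lemma \ref{phase function}, whose proof is postponed to Section \ref{proof of lemma}: one must locate \emph{all} almost-closed billiard trajectories of the prescribed combinatorial type joining two near-diagonal interior points and confirm the count is exactly eight with the stated tangency/non-tangency pattern, uniformly as $(x,y)\to\Delta\d\Omega$. Granting that, the remaining work is (a) checking that the $\Theta_{j,\pm}^k$ are bona fide nondegenerate phase functions on a neighborhood independent of the large parameter $j$ --- this is where the $O(j^{-1})$ estimates of Section \ref{proof of lemma} are used --- and (b) the careful Leray-form, sign, and Maslov bookkeeping in elliptical polar coordinates; neither is conceptually difficult once Proposition \ref{symbol prop} and Lemma \ref{parametrization of canonical relations} are in hand.
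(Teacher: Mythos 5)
Your proposal is correct and follows essentially the same route as the paper: Chazarain's parametrix supplies the Lagrangian $\Gamma_\pm^j$ and order $-5/4$, Proposition \ref{symbol prop} gives the half-density symbol $e_\pm$, Lemmas \ref{phase function} and \ref{parametrization of canonical relations} supply the eight phase functions $\Theta_{j,\pm}^k=\pm\tau(t-\Psi_j^k)$ that exhaust the connected components, and inverting \eqref{principal symbol of an FIO} through the Leray density $dC_{\Theta_{j,\pm}^k}=\mp\,d\tau\wedge dx\wedge dy$ in elliptical polar coordinates yields the amplitude $\tfrac{\pm1}{2|\tau|^{1/2}i}|A_j^k|^{1/2}$, with the announced deferral of Maslov factors. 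The only cosmetic divergence is that you make the underlying principle ``same Lagrangian plus same principal symbol $\Rightarrow$ agreement modulo lower order'' fully explicit, which the paper leaves implicit in its computation.
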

Recall that according to Theorem \ref{wavetrace}, the variation of the wave trace is given by
$$
\delta \text{Tr} \cos t \sqrt{-\Delta_\epsilon} = \int_{\d \Omega} L_R^b(t,q,q)\dot{\rho} + \frac{t}{2}S_R(t,q,q)\dot{K}\,dq,
$$
where $L_R^b$ is defined as
$$
L_R^b(t,q,q') = \frac{t}{2} (-\nabla_1^T \nabla_2^T  -\Delta_2  + K_0^2 + K_0 \kappa) S_R.
$$
The highest order terms come only from the differentiated sine kernels, which implies that
$$
\delta \text{Tr} \cos t \sqrt{-\Delta_\epsilon} = \int_{\d \Omega} \frac{t}{2} ((-\nabla_1^T \nabla_2^T - \Delta_2)S_R)(t,q,q) \dot{\rho}\,dq + L.O.T.
$$
In fact, we can discard even more terms in the singularity expansion:
\begin{lemm}\label{Sj lemma}
Modulo Maslov factors and distributions of lower order, the variation of the localized (even) wave trace near a simple length $T_j$ is given by
\begin{align*}
\sum_\pm \int_{\d \Omega} \frac{t}{2} ((-\nabla_1^T \nabla_2^T& - \Delta_2)(S_{j-1,\pm}^1  +S_{j-1,\pm}^5 + S_{j,\pm}^2 + S_{j,\pm}^3 \\
&+ S_{j,\pm}^6 + S_{j,\pm}^7 + S_{j + 1, \pm}^4 +  S_{j +1,\pm}^8 ))(t,q,q) \dot{\rho}\,dq
\end{align*}
\end{lemm}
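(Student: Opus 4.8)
\emph{Proof plan.} The reduction of $L_R^b$ to its top-order part is immediate and already recorded: the multiplication operators $K_0^2$ and $K_0\kappa$ are of order zero, so $(K_0^2 + K_0\kappa)S_R$ is two orders lower, as a Lagrangian distribution, than $(-\nabla_1^T\nabla_2^T - \Delta_2)S_R$, while $\frac{t}{2}S_R\dot K$ carries one fewer derivative than the $\dot\rho$ term; hence
\[
\delta \text{Tr}\cos t\sqrt{-\Delta_\epsilon} = \int_{\d\Omega}\frac{t}{2}\big((-\nabla_1^T\nabla_2^T - \Delta_2)S_R\big)(t,q,q)\,\dot\rho\,dq + \text{L.O.T.}
\]
The plan is to localize the right-hand side near the simple length $T_j$ with the cutoffs $\chi_1(t)$, $\chi_2$ of Section \ref{Chazarain's Parametrix}, which alters the integrand by a function smooth in $t$ near $T_j$; to replace $S_R$ by Chazarain's parametrix \eqref{Chazarain sum}, which agrees with it microlocally; and to expand each $S_m$ near $\Delta\d\Omega$ via Theorem \ref{parametrix with sum} as $\sum_{\pm,k=1}^{8}S_{m,\pm}^k$ plus lower order Lagrangian terms. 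Applying $-\nabla_1^T\nabla_2^T - \Delta_2$ to $S_{m,\pm}^k$ leaves its phase $\pm\tau(t - \Psi_m^k(x,y))$, and hence the canonical relation $\Gamma_\pm^{m,k}$, untouched, so the whole problem becomes: for which pairs $(m,k)$ is the diagonal restriction of $S_{m,\pm}^k$, integrated over $q\in\d\Omega$, singular at $t = T_j$?

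Stationary phase in $(\tau,q)$ confines the singular support of that integral to the critical values of $q\mapsto\Psi_m^k(q,q)$; by \eqref{psi j x derivative}--\eqref{psi j y derivative} the critical points are exactly the base points of closed billiard loops, and the critical values the lengths of the closed billiard orbits obtained, as $x,y\to q$, as limits of the type-$k$ orbits of $m$ reflections furnished by Lemma \ref{phase function}. The key geometric input, to be taken from the proof of that lemma in Section \ref{proof of lemma}, is the dichotomy: as $x,y\to q$, a first or last link of type $T$ (tangency to a confocal caustic before the adjacent reflection) persists with positive length, whereas a link of type $N$ collapses, its reflection point merging into $q$. Counting vertices in the limit, a $TT$ orbit of $m$ reflections limits to a closed orbit with $m+1$ reflections, a $TN$ or $NT$ orbit to one with $m$ reflections, and an $NN$ orbit to one with $m-1$ reflections. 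By simplicity of $T_j$ (the remark after Theorem \ref{wavetrace variation}, from \cite{GuMe79a}), a closed billiard orbit has length $T_j$ exactly when it has rotation number $1/j$, i.e. winding number one and precisely $j$ reflections; imposing this selects $m = j-1$ for the two $TT$ orbits $k\in\{1,5\}$, $m = j$ for the $TN$ and $NT$ orbits $k\in\{2,3,6,7\}$, and $m = j+1$ for the two $NN$ orbits $k\in\{4,8\}$. For every other pair $(m,k)$ the limiting orbit is a winding-one orbit with a number of reflections different from $j$, so its length is some $T_{m'}$ with $m'\neq j$, hence $\neq T_j$, and $S_{m,\pm}^k$ is smooth near $T_j$; no orbit of winding number $\geq 2$ and length $T_j$ exists either. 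Assembling the surviving terms (each $k\in\{1,\dots,8\}$ with its matching $m\in\{j-1,j,j+1\}$, and with a $\pm$) and reinstating the $\frac{t}{2}$ prefactor gives the stated formula, with Maslov phases and the lower order Lagrangian remainders deferred to Section \ref{Maslov} and later.

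The step I expect to be the main obstacle is precisely this geometric bookkeeping: proving the $T$-versus-$N$ persistence/collapse dichotomy uniformly as $(x,y)\to\Delta\d\Omega$, checking that the only boundary loops of length $T_j$ arising as such limits are the members of the Poncelet family of length $T_j$ opened up at $q$, and ruling out a stray contribution at $t=T_j$ from some $S_m$ with $m\notin\{j-1,j,j+1\}$. All of this is carried by the detailed analysis of approximate geodesic loops in Section \ref{proof of lemma} together with the simplicity of $T_j$; granting it, the reduction of $L_R^b$ and the stationary-phase argument are routine.
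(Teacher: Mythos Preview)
Your proposal is correct and follows essentially the same approach as the paper: reduce $L_R^b$ to its top-order piece, expand $S_R$ via the parametrix of Theorem \ref{parametrix with sum}, and then use the $T$/$N$ persistence-versus-collapse dichotomy from Section \ref{proof of lemma} to count limiting reflection numbers and isolate the eight surviving pairs $(m,k)$. The only substantive point you gloss over that the paper treats explicitly is the justification that both the parametrices $S_{m,\pm}^k$ and the true kernel $S_R$ extend continuously to $\Delta\partial\Omega$ as distributions in $t$ (the paper invokes Weyl's law and H\"ormander's $L^\infty$ eigenfunction bounds for this), which is needed since the trace formula is evaluated on the boundary diagonal rather than in the interior where the parametrix was constructed.
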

\begin{proof}
For the localized wave trace, we only need to consider orbits which contribute to the singularity at $T_j$. Recall that for positive time, Lemma \ref{phase function} gives $8$ orbits connecting $x$ to $y$. These orbits coalesce into one of the orbits from \cite{MaMe82} as $(x, y) \to \Delta \d \Omega$. However, as the orbits coalesce within various configurations, not all of the limiting orbits will have $j$ reflections. As $T_j$ is simple, only the limiting periodic orbits having exactly $j$ reflections will contribute to the wave trace near $t = T_j$. Figure \ref{configurations} may be useful in visualizing the geometric arguments which follow. As $(x, y) \to \Delta \d \Omega$, the two corresponding orbits in $TT$ configuration $(k = 1,5)$ converge geometrically to a periodic orbit of $j+1$ reflections. The additional vertex appears at the boundary point where $x$ and $y$ coalesce. Similarly, the $NN$ orbits $(k = 4,8)$ can be seen to converge to a periodic orbit of $j-1$ reflections. In this case, the first and last moments of reflection at the boundary converge to a single impact point. The four orbits in $TN$ $(k = 2,6)$ and $NT$ $(k = 3,7)$ configurations preserve exactly $j$ reflections in the limit. Hence, when $x, y \in \text{int} \Omega$ converge to the boundary, only $4$ out of the $8$ orbits contribute to periodic trajectories of $j$ reflections on the boundary. However, in the limit, two additional $TT$ orbits of $j-1$ reflections converge to a periodic orbit of $j - 1 + 1 = j$ reflections. Similarly, two $NN$ orbits of $j+1$ reflections converge to a periodic trajectory of $j+1 -1 = j$ reflections. Any other orbit from $x$ to $y$ with strictly less than $j-1$ or strictly more than $j+1$ impact points at the boundary cannot converge to a periodic orbit of $j$ reflections. As we have localized the wave trace near the simple length $T_j$, only the $4 + 2 + 2 = 8$ orbits which converge geometrically to a periodic orbit of exactly $j$ reflections will contribute to the singularity near $T_j$. All additional orbits contribute smooth errors to the wave trace in a small neighborhood of $T_j$. It should also be clarified that although the parametrices $S_j$ are constructed in the interior, we can in fact extend them continuously to the diagonal of the boundary and this extension coincides with that of the true propagator $S_R(t,q,q)$ modulo lower order terms. Both propagators agree up to lower order Lagrangian distributions in the interior, microlocally near the canonical relations $\Gamma_\pm^j$. The explicit oscillatory integral representation for each $S_j$ in fact shows that they extend continuously up to the boundary since the functions $\Psi_j^k$ do. The true wave kernels $S_R$ also extend continuously up to the boundary as a family of distributions. To see this, note that
$$
S_R(t,x,y) = \sum_j \sin t \lambda_j \psi_j(x) \overline{\psi_j(y)},
$$
where $(\psi_j)_{j =1}^\infty$ is an $L^2$ orthonormal basis of Robin eignenfunctions. Multiplying by a test function $\phi$ in time and integrating by parts $4k$ times, we see that
\begin{align}\label{wave kernel}
\int_{-\infty}^\infty S_R(t,x,y) \phi(t) dt = \int_{-\infty}^\infty \sum_j \frac{\sin t \lambda_j}{\lambda_j^{4k}} \psi_j(x) \psi_j(y) \d_t^{4k}  \phi(t) dt. 
\end{align}
Combining Weyl's law on the asymptotic growth of $\lambda_j$ (see \cite{Zayed2004}, \cite{Irvii16}) and H\"ormander's $L^\infty$ bounds for eigenfunctions (\cite{HormanderLinfinity}), we see that the integrand in \eqref{wave kernel} can be made absolutely convergent for $k$ sufficiently large. An application of the dominated convergence theorem then shows that \eqref{wave kernel} is actually smooth in $x,y$, so $S_R$ has a smooth extension to the diagonal of the boundary as a distribution in $t$. In particular, both distributions agree up to lower order terms microlocally near the fibers of $\Gamma_\pm^j$ lying over diagonal of the boundary, which is required for the trace formula.
\end{proof}
\begin{def1}\label{psi j coincide on boundary}
As shown in the proof of Lemma \ref{Sj lemma} above, for each $j$, there exist $8$ limiting trajectories which converge geometrically to periodic orbits of exactly $j$ reflections. We denote the set of these trajectories by $\mathcal{G}_j(x,y)$ and say that $\gamma_{m,k} \in \mathcal{G}_j$ if $\gamma_{m,k}$ makes $m = j-1, j$ or $j+1$ reflections at the boundary and corresponds to the length functional $\Psi_m^k$. By the results in \cite{MaMe82}, \cite{GuMe79a} and \cite{Popov1994}, the length functionals $\Psi_j^2,\Psi_j^3 , \Psi_j^6 , \Psi_j^7 , \Psi_{j + 1}^4 , \Psi_{j +1}^8 , \Psi_{j-1}^1$ and $\Psi_{j-1}^5$ corresponding to orbits in $\mathcal{G}_j$ actually coincide for $x,y \in \d \Omega$ near the diagonal. We denote their common value by $\Psi_j$.
\end{def1}
As we obtained a rather explicit formula for $S_j$ in Theorem \ref{parametrix with sum}, it now remains to differentiate the kernels $S_j^k$ and substitue them into Lemma \ref{Sj lemma}. 
Using our oscillatory integral representation for $S_{j, \pm}^k$ in Theorem \ref{parametrix with sum}, we find that microlocally near $\Gamma_\pm^{j,k}$ and $t = T_j$,
{\begin{align}\label{parametrix with 8}
(- \nabla_1^T \nabla_2^T - \Delta_2) S_{j, \pm}^k = \frac{\mp t}{2} \int_{0}^\infty e^{\pm i \tau(t - \Psi_j^k(x,y))} \frac{ (-i\tau)^2|\nabla_2^\perp \Psi_j^k|^2 {|A_j^k|^{1/2}}}{2 |\tau|^{1/2} i} \dot{\rho}\,d\tau + L.O.T.
\end{align}}
in an open neighborhood of the diagonal of the boundary. We have only written the terms coming from $-\nabla^T \nabla^T - \Delta$ acting on the phase function in equation \eqref{parametrix with 8}, as all other terms don't contribute positive powers of $\tau$ and can be regarded as lower order in the singularity expansion. The operator $\nabla^\perp$ in the integrand of \eqref{parametrix with 8} is a conformal multiple of the vector field $\frac{\d}{\d \nu}$ coming from elliptical polar coordinates, which gives an extension of the normal vector field to a neighborhood of the boundary. As Theorem \ref{wavetrace} tells us that the variation of the wave trace is given by integrating the kernels $L_R^b$ and $S_R$ over the diagonal of the boundary, we want to understand the restriction of \eqref{parametrix with 8} to the boundary. In Definition \ref{psi j coincide on boundary}, we noted that the length of the unique orbit connecting two boundary points with $j$ reflections is well defined. When $x = y = q \in \d \Omega$, $\Psi_j(q,q)$ gives precisely the length $T_j$ of a periodic orbit with $j$ reflections emanating from and terminating at $q$. By Poncelet's Theorem (\ref{poncelet}), $\Psi_j(q,q)$ is actually equal to the constant function $T_j$, which simplifies the phase in equation \eqref{parametrix with 8}. The differentiated kernels in equation \eqref{parametrix with 8} also have factors of $A_j^k$ and $\nabla_2^\perp \Psi_j^k$ in the integrand. We now discuss how to extend these derivatives of $\Psi_j^k$ to the boundary in a manner analogous to that of Definition \ref{psi j coincide on boundary}.
\\
\\
We have already computed the $x,y$ gradient of the functions $\Psi_j^k$ in equations \eqref{psi j x derivative} and \eqref{psi j y derivative} in the proof of Lemma \ref{parametrization of canonical relations}:
$$
d_x \Psi_j^k = \frac{x - q_1^k}{|x - q_1^k|}, \quad d_y \Psi_j^k = \frac{y - q_j^k}{|y - q_j^k|}.
$$
Geometrically, these are the incident and reflected outgoing unit directions of the corresponding billiard trajectories at $x$ and $y$. The expression $|\nabla_y^\perp \Psi_j^k|^2$ in \eqref{parametrix with 8} can easily be seen to be $\sin^2 \omega_j^k$, where $\omega_j^k$ is the angle made between the terminal link of the billiard trajectory and the positively oriented tangent line to the confocal ellipse on which $y$ lies. As $x,y \to \d \Omega$, the absolute value of these angles associated to trajectories in the $\mathcal{G}_j$ converge to the terminal angle of the unique limiting orbit connecting boundary points in \cite{MaMe82}. We are careful to point out that only the absolute values of the angles converge, since the angles associated to orbits in $TN$ and $NN$ configurations actually converge to \textit{minus} the angle of incidence of the limiting trajectory. All of the limiting orbits which connect boundary points in \cite{MaMe82} are automatically in $TT$ configuration.
\\
\\
In order to understand the factor $A_j^k$, we must compute the Hessian of $\Psi_j^k$ and its restriction to the boundary. Recall that $A_j^k$ is given by equation \eqref{def of Aj} in elliptical polar coordinates. We can further simplify that expression to obtain the following:
\begin{lemm}\label{Aj w derivatives}
On the boundary, all of the factors $A_{j-1}^1, A_{j-1}^5 A_j^2, A_j^3, A_j^6, A_j^7, A_{j+1}^4$ and $A_j^8$ corresponding to orbits in $\mathcal{G}_j$ coincide up to sign. We denote their common (absolute) value by $|A_j|$, which on $\Delta \d \Omega$ in particular, satisfies the equation
\begin{align*}
|A_j(\mu_0, \theta, \mu_0, \theta)| = \left|\frac{f^5(\mu_0, \theta)}{\sin \omega} \frac{\d \omega}{\d \theta} \right|.
\end{align*}
Here, $\omega$ is the angle of incidence of the unique periodic orbit with $j$ reflections at $(a \cos \theta, b \sin \theta) \in \d \Omega$ and $f(\mu_0, \theta) = {({a^2(\cosh^2 \mu_0 - \cos^2 \theta)})}^{1/2}$ is the inverse of the conformal factor in elliptical coordinates.
\end{lemm}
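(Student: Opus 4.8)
The plan is to compute the Hessian of each branch $\Psi_j^k$ of the length functional in elliptical polar coordinates, substitute into the definition \eqref{def of Aj} of $A_j^k$, and then simplify on $\partial\Omega\times\partial\Omega$ using the coincidence of the branches there (Definition \ref{psi j coincide on boundary}) together with Poncelet's Theorem \ref{poncelet}. First I would record the geometric content of the first derivatives: by \eqref{psi j x derivative} and \eqref{psi j y derivative}, $d_x\Psi_j^k$ and $d_y\Psi_j^k$ are the Euclidean unit directions of the initial and terminal links of the corresponding orbit, and since the flat metric in elliptical polar coordinates is the conformal multiple $f^2(d\mu^2+d\phi^2)$ of the standard one, this gives $\Psi_\mu^2+\Psi_\phi^2=f(\mu,\phi)^2$ and $\Psi_\nu^2+\Psi_\theta^2=f(\nu,\theta)^2$, with $\Psi_\phi=f(\mu,\phi)\cos\alpha_k$ and $\Psi_\mu=\pm f(\mu,\phi)\sin\alpha_k$, where $\alpha_k$ is the angle the first link makes with the tangent direction $\partial_\phi$ and the sign records whether that link is of $T$ or $N$ type; similarly for $\Psi_\nu,\Psi_\theta$ and the terminal link. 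On $\partial\Omega\times\partial\Omega$, Definition \ref{psi j coincide on boundary} says all branches attached to orbits of $\mathcal G_j$ restrict to the single function $\Psi_j(\phi,\theta)$, so $\Psi_\phi=\partial_\phi\Psi_j$, $\Psi_\theta=\partial_\theta\Psi_j$ and the purely tangential Hessian entry $\Psi_{\phi\theta}=\partial_\phi\partial_\theta\Psi_j$ are common to all branches; consequently $|\Psi_\mu|$ and $|\Psi_\nu|$ are common as well.

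The remaining ingredients in \eqref{def of Aj} are the mixed normal--tangential Hessian entries $\Psi_{\nu\phi}$, $\Psi_{\theta\mu}$, $\Psi_{\nu\mu}$, which are the only quantities not immediately seen to be branch-independent. Differentiating $\Psi_\phi=\langle d_x\Psi_j^k,\partial_\phi x\rangle$ in the $y$-variable $\nu$ (noting $\partial_\phi x$ is independent of $y$) gives $\Psi_{\nu\phi}=\langle\partial_\nu d_x\Psi_j^k,\partial_\phi x\rangle$, and since $d_x\Psi_j^k=(x-q_1^k)/|x-q_1^k|$ this reduces to $\partial_\nu q_1^k$, the rate of change of the first impact point as $y$ slides in the normal direction --- a Jacobi-field quantity along the orbit. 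By the rigidity of the confocal caustic foliation (Section \ref{Elliptical billiards}) and Poncelet's Theorem \ref{poncelet} one checks, as with the telescoping in \eqref{psi j derivative}, that the contributions of the $j-1$, $j$ or $j+1$ intermediate vertices cancel and only endpoint data survives on $\partial\Omega\times\partial\Omega$, so that $|\Psi_{\nu\phi}|$, $|\Psi_{\theta\mu}|$, $|\Psi_{\nu\mu}|$ are branch-independent and the sign flips (between $T$ and $N$ links, and between an orbit and its mirror image through the vertical axis) enter the four monomials of \eqref{def of Aj} in synchrony, leaving $|A_j^k|$ unchanged. This justifies writing $|A_j|$ for the common value. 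I expect \emph{this} step --- the Jacobi-field computation of the mixed Hessian and the bookkeeping of signs across the eight configurations $TT,TN,NT,NN$ and their mirror images --- to be the main obstacle.

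Finally I would restrict to $\Delta\partial\Omega$, i.e. $\mu=\nu=\mu_0$ and $\phi=\theta$. Poncelet's Theorem \ref{poncelet} gives $\Psi_j(\phi,\phi)\equiv T_j$; differentiating once yields $\partial_\phi\Psi_j=-\partial_\theta\Psi_j$ on the diagonal, which together with the reflection law forces the first and terminal link directions at $q=(a\cos\theta,b\sin\theta)$ to make angle $\pm\omega$ with the boundary, so that $\Psi_\phi=-\Psi_\theta$ with $|\Psi_\phi|=f(\mu_0,\theta)\cos\omega$ and $\Psi_\mu=\Psi_\nu$ with $|\Psi_\mu|=f(\mu_0,\theta)\sin\omega$ there. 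Because $\Psi_j^k$ is symmetric under $x\leftrightarrow y$, the exchange $(\mu,\phi,\nu,\theta)\mapsto(\nu,\theta,\mu,\phi)$ shows $\Psi_{\nu\phi}=\Psi_{\theta\mu}$ on $\Delta\partial\Omega$, so the cross term in \eqref{def of Aj} drops out and $A_j$ collapses to $f(\mu_0,\theta)^{2}\bigl(-f(\mu_0,\theta)^{2}\sin^{2}\omega\,\Psi_{\theta\phi}+f(\mu_0,\theta)^{2}\cos^{2}\omega\,\Psi_{\nu\mu}\bigr)$. By the mixed-Hessian analysis of the previous step, $\Psi_{\theta\phi}$ and $\Psi_{\nu\mu}$ on the diagonal are each $\pm f(\mu_0,\theta)\,(\partial\omega/\partial\theta)/\sin\omega$ --- they measure, respectively, the change of the terminal link angle at $y$ under a tangential, resp. normal, displacement of $x$, both controlled by the variation of the incidence angle $\omega$ of the periodic $j$-reflection orbit as its base point runs along $\partial\Omega$ --- with opposite signs, so that substituting and using $\sin^{2}\omega+\cos^{2}\omega=1$ produces $|A_j(\mu_0,\theta,\mu_0,\theta)|=\left|\dfrac{f^{5}(\mu_0,\theta)}{\sin\omega}\dfrac{\partial\omega}{\partial\theta}\right|$.
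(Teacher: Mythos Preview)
Your overall plan matches the paper's, and your identification of the first derivatives $\Psi_\phi=-f\cos\omega_1$, $\Psi_\theta=f\cos\omega_2$, $\Psi_\mu=\pm f\sin\omega_1$, $\Psi_\nu=\pm f\sin\omega_2$ (sign according to $T/N$ type) is correct. The final paragraph, however, contains several concrete errors that make the argument break down.

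First, $\Psi_j^k$ is \emph{not} symmetric under $x\leftrightarrow y$: reversing a counterclockwise orbit from $x$ to $y$ gives a clockwise orbit from $y$ to $x$, so the exchange permutes the branches $k$ rather than fixing them. Hence the conclusion $\Psi_{\nu\phi}=\Psi_{\theta\mu}$ on the diagonal is unjustified, and in fact false: differentiating directly one finds $\Psi_{\nu\phi}=f\sin\omega_1\,\partial_\nu\omega_1$ and $\Psi_{\theta\mu}=\pm f\cos\omega_1\,\partial_\theta\omega_1$, and on the diagonal (for the $TT$ branch, say) these differ by a sign. Moreover, for $TN$ and $NT$ configurations one has $\Psi_\mu=-\Psi_\nu$ rather than $\Psi_\mu=\Psi_\nu$, so the prefactor $\Psi_\mu\Psi_\theta+\Psi_\phi\Psi_\nu$ does \emph{not} vanish and the ``cross terms'' in \eqref{def of Aj} do not drop out in the way you claim.

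Second, your asserted values $\Psi_{\theta\phi},\Psi_{\nu\mu}=\pm f\,(\partial\omega/\partial\theta)/\sin\omega$ are incorrect. Straight differentiation of $\Psi_\phi=-f(\mu,\phi)\cos\omega_1$ (noting $f(\mu,\phi)$ is independent of $\theta,\nu$) gives $\Psi_{\theta\phi}=f\sin\omega_1\,\partial_\theta\omega_1$ and similarly $\Psi_{\nu\mu}=\pm f\cos\omega_1\,\partial_\nu\omega_1$. The quantity $1/\sin\omega$ does not appear at this stage; it only emerges after all four terms of \eqref{def of Aj} are combined.

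The missing idea, which the paper supplies, is the geometric observation that $\omega_1$ is constant as $y$ moves along the direction $L$ of the \emph{last} link: sliding $y$ along that link leaves all impact points $q_1^k,\dots,q_j^k$ (and hence the initial direction at $x$) unchanged. Writing $L=\cos\omega_2\,\nabla_y^T\pm\sin\omega_2\,\nabla_y^\perp$ (sign according to whether the last link is $T$ or $N$), the identity $\nabla_L\omega_1=0$ yields $\partial_\nu\omega_1=\mp\cot\omega_2\,\partial_\theta\omega_1$. With this relation in hand, one substitutes the four second derivatives $\Psi_{\nu\phi},\Psi_{\theta\mu},\Psi_{\theta\phi},\Psi_{\nu\mu}$ (all expressed through $\partial_\theta\omega_1,\partial_\nu\omega_1$) into \eqref{def of Aj}; the $\sin^2\omega_1+\cos^2\omega_1$ combinations collapse the expression to $f^3(\nu,\theta)f^2(\mu,\phi)\bigl(\cos\omega_2\,\partial_\nu\omega_1\mp\sin\omega_2\,\partial_\theta\omega_1\bigr)$ for each configuration, and inserting $\partial_\nu\omega_1=\mp\cot\omega_2\,\partial_\theta\omega_1$ then gives $|A_j|=\dfrac{f^5}{\sin\omega}\Bigl|\dfrac{\partial\omega}{\partial\theta}\Bigr|$ on $\Delta\partial\Omega$, uniformly across all eight branches in $\mathcal G_j$.
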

\begin{proof}
Let $(m,k)$ denote an admissible pair of indices corresponding to an orbit $\gamma_{m,k} \in \mathcal{G}_j$. Recall the notation in the proof of Lemma \ref{parametrization of canonical relations}, where we described a billiard trajectory by the point $(x,q,y) \in \Omega \times \d \Omega^m \times \Omega$. If $x \in \Omega$, let us denote the angle between $x - q_1$ and the positively oriented tangent line to the confocal ellipse on which $x$ lies by $\omega_1$. Similarly, if $y \in \Omega$, let us also denote the angle between $y- q_j$ and positively oriented tangent line to the confocal ellipse on which $y$ lies by $\omega_2$. Since the elliptical coordinates $(\mu, \phi, \nu, \theta)$ are conformally equivalent to Euclidean $(x,y)$ coordinates, we have
\begin{align*}
\nabla_x^T &= {({a^2(\cosh^2 \mu - \cos^2 \phi)})}^{-1/2} \frac{\d}{\d\phi}, \qquad
\nabla_y^T = {({a^2(\cosh^2 \nu - \cos^2 \theta)})}^{-1/2} \frac{\d}{\d\theta},\\
\nabla_x^\perp &= {({a^2(\cosh^2 \mu - \cos^2 \phi)})}^{-1/2} \frac{\d}{\d\mu}, \qquad
\nabla_y^\perp = {({a^2(\cosh^2 \nu - \cos^2 \theta)})}^{-1/2} \frac{\d}{\d\nu}.
\end{align*}
Equations \eqref{psi j x derivative} and \eqref{psi j y derivative} in the proof of Lemma \ref{parametrization of canonical relations} then tell us that
\begin{align}\label{first derivatives}
\begin{split}
\Psi_\mu &= \pm f(\mu, \phi) \sin \omega_1, \qquad \Psi_\nu = \pm f(\nu, \theta) \sin \omega_2,\\
\Psi_\phi &= - f(\mu, \phi) \cos \omega_1, \qquad \Psi_\theta = f(\nu, \theta) \cos \omega_2,
\end{split}
\end{align}
where the $\pm$ in the equations for $\Psi_\mu$ and $\Psi_\nu$ are dependent on the configuration of the orbit. They are $+$ if the corresponding initial or final link is a $T$ link and $-$ if it is an $N$ link. Using \eqref{first derivatives} to calculate the second derivatives, we have
\begin{align}\label{second derivatives}
\begin{split}
\Psi_{ \theta \phi} &= - f(\mu,\phi) \sin \omega_1 \frac{\d\omega_1}{\d \theta}, \qquad 
\Psi_{ \nu \phi} = f(\mu,\phi) \sin \omega_1 \frac{\d\omega_1}{\d \nu},\\
\Psi_{\theta \mu} &= \pm f(\mu, \phi) \cos \omega_1 \frac{\d\omega_1}{\d \theta}, \qquad \Psi_{\nu \mu} = \pm f(\mu,\phi) \cos \omega_1 \frac{\d\omega_1}{\d \nu}.
\end{split}
\end{align}
Then, inserting \eqref{second derivatives} into the expression \eqref{def of Aj} for all possible configurations, we find that on the boundary,
\begin{align}\label{new Aj w derivatives}
A_m^k(\mu_0, \phi, \mu_0, \theta) =
\begin{cases}
f^3(\mu_0, \theta) f^2(\mu_0, \phi) (\cos \omega_2 \frac{\d\omega_1}{\d \nu} - \sin \omega_2 \frac{\d \omega_1}{\d \theta}) & \gamma_{m,k} \in \mathcal{G}_j^{TT} \cup \mathcal{G}_j^{NT}\\
f^3(\mu_0, \theta) f^2(\mu_0, \phi) (\cos \omega_2 \frac{\d\omega_1}{\d \nu} + \sin \omega_2 \frac{\d \omega_1}{\d \theta}) & \gamma_{m,k} \in \mathcal{G}_j^{TN}\\
f^3(\mu_0, \theta) f^2(\mu_0, \phi) (-\cos \omega_2 \frac{\d\omega_1}{\d \nu} - \sin \omega_2 \frac{\d \omega_1}{\d \theta}) & \gamma_{m,k} \in \mathcal{G}_j^{NN},
\end{cases}
\end{align}
where the superscripts on $\mathcal{G}_j$ indicate the subcollection of orbits within a particular configuration. Before evaluating this expression on the diagonal of the boundary, we differentiate $\omega_1$ in the direction $L$ of the last link to see that
\begin{align*}
\nabla_L \omega_1 = 0 &= \cos \omega_2 \nabla^T \omega_1 \pm \sin \omega_2 \nabla^\perp \omega_1 = \frac{1}{f} \cos \omega_2  \frac{\d \omega_1}{\d \theta} \pm \frac{1}{f} \sin \omega_2  \frac{\d \omega_1}{\d \nu},
\end{align*}
where the $\pm$ correspond to whether the last link is a $T$ link $(+)$ or an $N$ link $(-)$. This implies that,
\begin{align}\label{d omega d nu}
\frac{\d \omega}{\d \nu} = \begin{cases}
\cot \omega_2 \frac{\d \omega_1}{\d \theta} & \gamma_{m,n} \in \mathcal{G}_j^{TN} \cup \mathcal{G}_j^{NN}\\
- \cot \omega_2 \frac{\d \omega_1}{\d \theta} & \gamma_{m,n} \in \mathcal{G}_j^{TT} \cup \mathcal{G}_j^{NT}.
\end{cases}
\end{align}
Note that on the diagonal of the boundary, $\omega_1 = \omega_2$ by the law of equal reflection for billiards. We denote their common value by $\omega$. Inserting formula \eqref{d omega d nu} into \eqref{new Aj w derivatives} and evaluating on $\Delta \d \Omega$, we find that
\begin{align*}
|A_m^k(\mu_0, \phi, \mu_0, \theta)| =  f^5 \left(\frac{\cos^2 \omega}{\sin \omega} + \frac{\sin^2 \omega}{\sin \omega}  \right) \left| \frac{\d \omega}{\d \theta} \right|  = \frac{ f^5}{\sin \omega} \left| \frac{\d \omega}{\d \theta} \right|,
\end{align*}
which proves the lemma.
\end{proof}
Substituting the formula for $A_j$ in Lemma \ref{Aj w derivatives} into \eqref{parametrix with 8} and performing the integral \eqref{parametrix with 8} in $\tau$, we obtain:
\begin{coro}\label{distribution with Aj factor}
The variation of the wave trace localized near $t = T_j$ is
\begin{align*}
4t \Re \left\{ e^{i\sigma \pi /4}(t - T_j - i0^+)^{-5/2}\right\} \int_{\d \Omega} |\nabla^\perp \Psi_j|^2 {|A_j(q,q)|^{1/2}} \dot{\rho}(q) \,dq + L.O.T.,
\end{align*}
where $\sigma$ is a Maslov index.
\end{coro}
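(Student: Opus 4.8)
The plan is to substitute the explicit oscillatory-integral representation \eqref{parametrix with 8} for each of the eight differentiated kernels appearing in Lemma \ref{Sj lemma}, restrict to the diagonal $x=y=q\in\partial\Omega$, and then evaluate the resulting $\tau$-integral. First I would simplify the phases: by Poncelet's Theorem \ref{poncelet} together with Definition \ref{psi j coincide on boundary}, each length functional $\Psi_m^k$ arising in Lemma \ref{Sj lemma} satisfies $\Psi_m^k(q,q)=T_j$, a constant, so every summand in \eqref{parametrix with 8} acquires the phase $\pm\tau(t-T_j)$ with all $q$-dependence confined to the amplitude. Next I would compute the diagonal amplitudes: $|\nabla_2^\perp\Psi_j^k|^2=\sin^2\omega_j^k$ reduces on $\Delta\partial\Omega$ to the common value $\sin^2\omega=|\nabla^\perp\Psi_j|^2$ by the law of equal reflection, and $|A_j^k(q,q)|^{1/2}$ reduces to the common value $|A_j(q,q)|^{1/2}$ by Lemma \ref{Aj w derivatives}. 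Consequently all eight orbit terms, and both of the $\pm$ modes $S_{m,+}^k,S_{m,-}^k$ making up each $S_m^k$, carry the identical, $\tau$-independent amplitude $|\nabla^\perp\Psi_j|^2|A_j(q,q)|^{1/2}\dot\rho(q)$, which factors out of the $d\tau$ integral entirely.

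With the amplitude extracted, the remaining model oscillatory integral is $\int_0^\infty e^{\pm i\tau(t-T_j)}(-i\tau)^2|\tau|^{-1/2}\,d\tau$, the Fourier transform of a homogeneous distribution; up to a gamma-function constant it equals $(t-T_j\mp i0^+)^{-5/2}$ times a power of $e^{i\pi/4}$. Summing the two modes produces $2\Re\{\,\cdot\,\}$, and collecting the numerical constants coming from the prefactors in Lemma \ref{Sj lemma} and \eqref{parametrix with 8}, from the gamma factor, and from the eight orbit contributions yields the overall constant $4t$; the various accumulated factors of $e^{i\pi/4}$ (from the Fourier transform, from $(-i)^2$, and from the Keller--Maslov line bundle over the $\Gamma_\pm^{j,k}$) assemble into a single phase $e^{i\sigma\pi/4}$ with $\sigma$ a Maslov index, whose computation is deferred to Section \ref{Maslov}. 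Every quantity discarded along the way---the pieces of $L_R^b$ other than the doubly differentiated sine kernel, the terms of $(-\nabla_1^T\nabla_2^T-\Delta_2)$ that do not differentiate the phase twice, the $L.O.T.$ already present in the parametrix of Theorem \ref{parametrix with sum}, and the difference between $S_j$ and $S_R$ microlocally near $\Gamma_\pm^j$---contributes a singularity of order at least $3/2$ lower, hence is absorbed into $L.O.T.$, giving the stated formula.

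The step requiring the most care is the sign and Maslov bookkeeping. In \eqref{first derivatives} the normal derivatives $\Psi_\mu,\Psi_\nu$ carry a configuration-dependent sign ($+$ for a $T$ link, $-$ for an $N$ link), and one must verify that the eight orbit types $TT,TN,NT,NN$ (clockwise and counterclockwise) do not cancel against one another. The crucial observation is that only the \emph{square} $|\nabla_2^\perp\Psi_j^k|^2=\sin^2\omega$ enters the leading symbol, so those signs are invisible at leading order; the residual sign ambiguity, together with the choice of branch in $(t-T_j-i0^+)^{-5/2}$ and the multiplicity of phase functions parametrizing each component $\Gamma_\pm^{j,k}$, is exactly what the single index $\sigma$ records. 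One must also invoke Proposition \ref{caustics}, which ensures that $F_{T_j}$ consists, up to time reversal, of a single connected component, so that the eight contributions genuinely reinforce rather than interfere destructively. A minor remaining point is to justify the distributional $\tau$-integral as an honest identity of Lagrangian distributions near $t=T_j$, which is routine given the homogeneity of the amplitude.
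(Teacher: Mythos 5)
Your proposal is correct and follows essentially the same route as the paper's proof: substitute the oscillatory integral representation from Theorem~\ref{parametrix with sum} into Lemma~\ref{Sj lemma}, use Poncelet and Definition~\ref{psi j coincide on boundary} to render the phases constant and Lemma~\ref{Aj w derivatives} to equate the amplitudes across the eight orbit types in $\mathcal{G}_j$, evaluate the resulting $\tau$-integral via the Fourier transform of $\tau^{3/2}\mathbbm{1}_{(0,\infty)}$, and combine the conjugate $\pm$ modes of $\Gamma_\pm^j$ into a real part with Maslov factor $e^{i\sigma\pi/4}$, finally multiplying by $8$ for the coinciding orbit contributions. Your added remark that only $\sin^2\omega$ enters the leading symbol (so the configuration-dependent signs of $\Psi_\mu,\Psi_\nu$ cannot cause cancellation) makes explicit a point the paper handles implicitly through the statement that the eight terms ``coincide on the boundary,'' but it is the same mechanism; the invocation of Proposition~\ref{caustics} to rule out destructive interference is also present in the paper's logic.
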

\begin{proof}
For a fixed $j$, the parametrices corresponding to $\Gamma_+^j$ and $\Gamma_-^j$ are first multiplied by Maslov factors of the form $e^{\pm i\sigma \pi/4}$. This is due to the multiplicity of phase functions parametrizing the canonical relation of the wave propagator, as briefly described in Section \ref{Fourier Integral Operators}. It is well known that the Maslov factors on the two branches $\Gamma_-^j$ and $\Gamma_+^j$ of the canoncial relation are conjugate to one another, owing to the two modes of propagation:
$$
\sin t \sqrt{-\Delta} = \frac{e^{i t\sqrt{-\Delta}} - e^{-it\sqrt{-\Delta}}}{2 i t \sqrt{-\Delta}}.
$$
While in principle, the Maslov indices might depend on $j$, it is shown in \cite{GuMe79b} that the Maslov indices in Chazarain's parametrix remain unchanged after a reflection at the boundary. The Maslov indices $\pm \sigma$ will in fact be explicitly computed in Section \ref{Maslov} below. The contributions of the wave kernels corresponding to $\Gamma_+^j$ and $\Gamma_-^j$ are then added and the result follows from a limiting argument for the Fourier transform of the homogeneous distribution $\tau^{3/2} \mathbbm{1}_{(0,\infty)}$, as can be found in Chapter 7 of \cite{Ho90}. Since the $8$ terms in \eqref{parametrix with 8} corresponding to orbits in $\mathcal{G}_j$ coincide on the boundary, we multiply the final integral by a factor of $8$.
\end{proof}

\subsection{Computing the Maslov index}\label{Maslov}
To explicitly compute the Maslov factors $e^{i \pi \sigma_j^\pm / 4}$ on $\Gamma_{\pm}^{j}$, we use an argument due to Keller (\cite{Keller}), following the presentation in \cite{HassellHillairetFord}. The free wave propagator $U(t) = e^{-i t \sqrt{-\Delta}}$ on $\R^2$ has an integral kernel given by
\begin{align}
U(t,x,y) = \int_{\R_\xi^2} e^{i(\langle x-y, \xi \rangle - |\xi| t)}d\xi|dx \wedge dy|^{1/2},
\end{align}
considered as a distributional half density (see Section \ref{Fourier Integral Operators}). By changing variables and applying the method of stationary phase, it is shown in \cite{HassellHillairetFord} that the principal symbol of $U(t)$ on $N^*\{|x-y| = t\} = \Gamma_{\pm}^0$ is
\begin{align}\label{Mas}
e^{-i\pi/4} \left(\frac{\tau}{t}\right)^{1/2} |d\tau \wedge dx \wedge dy|^{1/2}.
\end{align}
Hence, the Maslov indices are given by $\sigma_0^{\pm} = \pm 1$ on $\Gamma_{\pm}^0$. As mentioned in the proof of Corollary \ref{distribution with Aj factor}, the arguments in \cite{GuMe79b}, which are in turn based on the construction in \cite{Ch76}, then show that after a reflection at the boundary the Maslov factors remain unchanged. Hence, $\sigma_j^\pm = \pm 1$ for all $j \in \Z$. Since both the kernels on $\Gamma_{\pm}^j$ contribute to the wave trace singularity near $T_j$, we sum together the contributions of $S_j^-$ and $S_j^+$ which explains the real parts in Theorem \ref{wavetrace variation} and Corollary \ref{distribution with Aj factor}.


\subsection{Calculating $\frac{\d \omega}{\d \theta}$}\label{d omega d theta}
To calculate the angular derivative in Lemma \ref{Aj w derivatives}, we will first relate it to the billiard map and then utilize some special dynamical properties of the ellipse in action angle coordinates. Recall that the billiard map takes place on the coball bundle $B^*\d \Omega$, which is diffeomorphic to the inward facing portion of the circle bundle with footpoints on the boundary and can be parametrized by coordinates $(\phi, \omega) \in S^1 \times S^1$ (although we only consider the nontangential, inward pointing directions corresponding to $0 < \omega < \pi$). Between any two points $x,y \in \d \Omega$, the results of \cite{MaMe82} show that there exists a unique broken geodesic of $j$ reflections emanating from $x$ and terminating at $y$. This geodesic makes an initial angle of $\omega$ (depending on both $x$ and $y$) with the tangent line $T_x \d \Omega$. Setting $x = y$ above gives the angle $\omega(x)$ corresponding to a periodic orbit, which we considered in Lemma \ref{Aj w derivatives}. Letting $\phi, \theta \in S^1$ be the angular variables which parametrize $x$ and $y$ respectively in elliptical polar coordinates, we need to calculate the quantity $\frac{\d \omega}{\d \theta}$, evaluated on the diagonal $\{\phi = \theta\}$. Consider the map
\begin{align}\label{definition of Bj}
\begin{cases}
B_j : S^1 \times (0,\pi) \to S^1,\\
B_j(\phi,\omega(\phi, \theta)) = \pi_1 \circ \beta^{j+1}(\phi, \omega) = \theta,
\end{cases}
\end{align}
where $\pi_1$ is the projection onto the first factor. Fixing $\phi$ and differentiating both sides in $\theta$ gives
\begin{align*}
\frac{\d B_j}{\d \omega} (x, \omega(\theta)) \frac{\d \omega}{\d \theta} = 1.
\end{align*}
Hence, we have
\begin{align}\label{formula for domega d theta}
\frac{\d \omega}{\d \theta} = \frac{1}{\frac{\d B_j}{\d \omega} (x, \omega(\theta))}.
\end{align}
We will use formula \eqref{formula for domega d theta} to calculate $\frac{\d \omega}{\d \theta}$. Recall that the linearized Poincar\'e map of the iterated billiard map $\beta^{j+1}$ at a periodic point $(\phi,\omega)$ is given by
\begin{align}
P_{j+1}(\phi,\omega) = \begin{pmatrix}
\frac{\d \beta_1^{j+1}}{\d \phi} & \frac{\d \beta_1^{j+1}}{\d \omega}\\
\frac{\d \beta_2^{j+1}}{\d \phi} & \frac{\d \beta_2^{j+1}}{\d \omega}
\end{pmatrix},
\end{align}
where $\beta_1^{j+1}$ and $\beta_2^{j+1}$ are the first and second components (in elliptical coordinates) of $\beta^{j+1}$. We are precisely interested in the $(1,2)$ entry of this matrix.\\
\\
To evaluate this quantity, we will use action angle coordinates for elliptical billiards, which we now describe, following the presentation in \cite{KaSo16}, \cite{DCR17} and \cite{ChFr88}. We begin by developing some basic elliptic function theory.
\\
\\
Elliptic functions and elliptic integrals were first studied in the context of computing the arclength of an ellipse. It is therefore no surprise that these same objects appear naturally in the study of elliptical billiards. Formally, an elliptic function is given by a doubly periodic, meromorphic function on the complex plane. One way to obtain elliptic functions is by inverting elliptic integrals:
\begin{def1}
An incomplete elliptic integral of the first kind is an integral of the form
$$
F(\phi, k) = \int_0^\phi \frac{d\tau}{\sqrt{1 - k^2 \sin^2\tau}}.
$$
The quantity $\phi$ is referred to as the amplitude and $k$ is the modulus. A complete elliptic integral of the first kind is given by fixing $\phi = \pi/2$:
$$
K(k) = F(\pi/2, k).
$$
\end{def1}
Note that for a fixed $k$, $F(\phi,k)$ is an increasing function of $\phi$. The amplitude function $\am(s;k)$ is obtained by inverting $F$ in the variable $\phi$:
\begin{align}\label{amplitude inverse}
s = \int_0^{\am(s;k)} \frac{d\tau}{\sqrt{1 - k^2 \sin^2 \tau}}.
\end{align}
\begin{def1}
The Jacobi elliptic functions are defined by
\begin{align*}
\cn(s;k) = \cos(\am(s;k)),\\
\sn(s;k) = \sin(\am(s;k)).
\end{align*}
\end{def1}
These are elliptic functions with periods $4K(k)$ and $4iK(k')$, where $k \in (0,1)$ is called the modulus and $k' = \sqrt{1- k^2}$ is the complimentary modulus. The reason these elliptic functions are useful is that they provide coordinates on phase space in which the billiard map becomes a simple translation. To the confocal ellipse
$$
C_\lambda = \left\{(x,y) \in \R^2:\frac{x^2}{a^2 - \lambda^2} + \frac{y^2}{b^2 - \lambda^2}  = 1 \right\},
$$
we associate the following parameters:
\begin{align}\label{confocal parameters}
\begin{split}
k_\lambda^2 &= \frac{a^2 - b^2}{a^2 - \lambda^2},\\
\delta_\lambda &= 2 F(\arcsin(\lambda/b); k_\lambda) = 2\int_0^{\arcsin(\lambda/b)} \frac{d\tau}{\sqrt{1 - k_\lambda^2 \sin^2 \tau}}.
\end{split}
\end{align}
Let us also denote the $4K(k_\lambda)$ periodic boundary parametrization associated to the caustic $C_\lambda$ by $q_\lambda(s)  = (- a \sn(s; k_\lambda), b \cn(s; k_\lambda))$ (a reflection about the $y$ axis of the parametrization considered in \cite{DCR17}). It is proven in \cite{ChFr88} that for all $s \in \R$ the line segment connecting $q_\lambda(s)$ and $q_\lambda(s + \delta_\lambda)$ is tangent to the caustic $C_\lambda$. In other words, $(s, \lambda)$ are precisely the action-angle coordinates from Section \ref{Billiards} and in these coordinates, the billiard map is given by a linear rotation along the invariant tori, which are the projections of $C_\lambda$ onto $B^* \d \Omega$.\\
\\
To relate these action angle coordinates to the elliptical polar coordinates in which we calculated $A_j$ (see Lemma \ref{Aj w derivatives}), note that
\begin{align*}
(x_1,x_2) = (a \cos \phi, b \sin \phi) &= (-a \sn(t_\phi;k_\lambda), b \cn(t_\phi; k_\lambda)),
\end{align*}
which implies
\begin{align}\label{phase shift}
\phi &= \am(t_\phi;k_\lambda) + \frac{\pi}{2}.
\end{align}
Here, $t_\phi$ is defined implicitly by the equations \eqref{amplitude inverse} and \eqref{phase shift}. Similarly, we find that
\begin{align*}
B_j(\phi,\omega) = \pi/2 + \am(t_\phi + (j+1) \delta_{\lambda}; k_\lambda).
\end{align*}
Differentiating $B_j$ in $\theta$, we obtain
\begin{align*}
\frac{\d B_j}{\d \omega} (\phi, \omega(\theta)) \frac{\d \omega}{\d \theta} = 1 \implies \frac{\d \omega}{\d \theta} = \frac{1}{\frac{\d B_j}{\d \omega} (\phi, \omega(\theta))}.
\end{align*}
By the chain rule, we see that
\begin{align}\label{differentiated Bj 2}
\frac{\d B_j}{ \d \omega} = \frac{\d \am}{\d s} \left( \frac{\d t_\phi}{\d \lambda^2} \frac{\d \lambda^2}{\d \omega} + (j+1) \frac{d \delta_{\lambda}}{d \lambda^2} \frac{\d \lambda^2}{\d \omega} \right) + \frac{\d \am}{\d k_\lambda^2} \frac{d k_\lambda^2}{d \lambda^2} \frac{\d \lambda^2}{\d \omega}.
\end{align}
We can factor out $\frac{\d \lambda^2}{\d \omega}$ from \eqref{differentiated Bj 2} and calculate each of the individual terms explicitly. Using the implicit function theorem, we find that
\begin{align*}
\frac{\d \am}{\d s} = \sqrt{1 - k_\lambda^2 \sn^2(s; k_\lambda)},
\end{align*}
evaluated at the point $s = t_\phi + (j+1) \delta_{\lambda}$. We now find the derivative of $t_\phi$ with respect to $\lambda$. Since $x$ is fixed, we know that the argument $\phi = \am(t_\phi; k_\lambda)$ must also be fixed. Differentiating in $\lambda$ under the integral, we see that
\begin{align*}
\frac{\d t_\phi}{\d \lambda^2} = \frac{k_\lambda^2}{(a^2 - \lambda^2)} \int_0^{\am(t_\phi;k_\lambda)} \frac{\sin^2 \tau d\tau}{(1- k_\lambda^2 \sin^2 \tau)^{3/2}}.
\end{align*}
Using the formula \eqref{confocal parameters} for $\delta_{\lambda}$, we also calculate that
\begin{align}\label{d delta d lambda}
\frac{d \delta_{\lambda}}{d \lambda^2} = \frac{1}{b \lambda \sqrt{1 - \frac{k_\lambda^2 \lambda^2}{b^2}}} \frac{1}{\sqrt{1 - \lambda^2/b^2}} +  \frac{2 k_\lambda^2}{a^2 - \lambda^2} \int_0^{\arcsin(\lambda/b)} \frac{\sin^2\tau d\tau}{(1- k_\lambda^2 \sin^2 \tau)^{3/2}}.
\end{align}
For the last term in \eqref{differentiated Bj 2}, write
$$
s = \int_0^{\am(s;k_\lambda)} \frac{d\tau}{\sqrt{1 - k_\lambda^2 \sin^2 \tau}}
$$
and differentiate both sides in the variable $k_\lambda^2$. We find that
\begin{align*}
\frac{\d \am}{\d k_\lambda^2} &= - \sqrt{1 - k_\lambda^2 \sn^2(s;k_\lambda)} \int_0^{\am(s;k_\lambda)} \frac{\sin^2\tau d\tau}{(1 - k_\lambda^2 \sin^2 \tau)^{3/2}},
\end{align*}
where $s$ is evaluated at $t_x + (j+1) \delta_{\lambda}$. It is also easy to see that
$$
\frac{d k_\lambda^2}{d \lambda^2} = \frac{k_\lambda^2}{(a^2 - \lambda^2)}.
$$
At the critical $\lambda$ corresponding to the angle ${\omega}(x)$ generating a periodic orbit at $x$, we have $\am(t_\phi + (j+1) \delta_{\lambda}) = \am(t_\phi) + 2\pi$. Using this, we calculate that
\begin{align}\label{differentiated Bj at periodic orbit}
\frac{\d B_j}{\d\omega} = \frac{\d \lambda^2}{\d \omega} \sqrt{1 - k_\lambda^2 \sn^2(t_\phi; k_\lambda)} \left(\frac{-k_\lambda^2}{(a^2 - \lambda^2)} \int_0^{2\pi} \frac{\sin^2\tau d\tau}{(1 - k_\lambda^2 \sin^2 \tau)^{3/2}} + (j+1) \frac{d \delta_\lambda}{d \lambda^2}\right).
\end{align}
We have not simplified the expression in parentheses in \eqref{differentiated Bj at periodic orbit}, since what will ultimately be important is that this term depends only on $\lambda$ and not on $\phi$ or $\omega$. Let us denote this factor by
\begin{align}\label{G of lambda}
G(\lambda) =  \frac{-k_\lambda^2}{(a^2 - \lambda^2)} \int_0^{2\pi} \frac{\sin^2\tau d\tau}{(1 - k_\lambda^2 \sin^2 \tau)^{3/2}} + (j+1) \frac{d \delta_\lambda}{d \lambda^2}.
\end{align}
The term $\d \lambda^2 / \d \omega$ is more difficult to compute and will rely on a geometric lemma, which we now present. As the ellipse is folliated by caustics, for each $(x,\omega) \in S_{\d \Omega}^*(\R^2)$, there exists a unique $\lambda$ such that the line segments of the billiard flow are always tangent to the confocal ellipse/hyperbola of parameter $\lambda$. The following lemma expresses this relationship between the confocal caustic and angle of incidence:
\begin{lemm}\label{angle caustic lemma}
The billiard ray emanating from $(a \cos \phi, b \sin \phi)$ at angle $\omega$ is tangent to the elliptical caustic $C_\lambda$, where the relationship between $\lambda$ and $\omega$ is given by
$$
\lambda^2 = \sin^2(\omega) \left (b^2 + (a^2-b^2) \sin^2(\phi) \right ).
$$
\end{lemm}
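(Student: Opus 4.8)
The plan is to reduce the claim to the classical tangency criterion between a line and a member of the confocal family $\{C_\lambda\}$, after which only an elementary trigonometric simplification remains. First I would record, at the boundary point $q(\phi)=(a\cos\phi,b\sin\phi)$, the tangent vector $q'(\phi)=(-a\sin\phi,b\cos\phi)$ and an outward normal $n(\phi)=(b\cos\phi,a\sin\phi)$, and observe that both have the same squared length $d(\phi)^2:=a^2\sin^2\phi+b^2\cos^2\phi=b^2+(a^2-b^2)\sin^2\phi$. The right-hand side of the lemma is then exactly $\sin^2\omega\,d(\phi)^2$, so it suffices to prove $\lambda^2=\sin^2\omega\,d(\phi)^2$.

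Next I would write the billiard ray direction as $v=\cos\omega\,\hat t(\phi)+\sin\omega\,\hat n(\phi)$ with $\hat t=q'/d$ and $\hat n=-n/d$ the inward unit normal, so that, up to an irrelevant overall sign, a unit conormal to the ray line is $(\cos\alpha,\sin\alpha)=\tfrac1{d(\phi)}\bigl(b\cos\phi\cos\omega-a\sin\phi\sin\omega,\ a\sin\phi\cos\omega+b\cos\phi\sin\omega\bigr)$, and its signed distance from the origin is $p=\langle q(\phi),(\cos\alpha,\sin\alpha)\rangle$; a one-line computation gives $p\,d(\phi)=ab\cos\omega-(a^2-b^2)\sin\phi\cos\phi\sin\omega$. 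Then I would invoke the standard fact that the line $\{x\cos\alpha+y\sin\alpha=p\}$ is tangent to the conic $x^2/A+y^2/B=1$ if and only if $p^2=A\cos^2\alpha+B\sin^2\alpha$ (dualize, or substitute $y$ and impose a double root). Applied to $C_\lambda$, for which $A=a^2-\lambda^2$ and $B=b^2-\lambda^2$, and using $\cos^2\alpha+\sin^2\alpha=1$, this reads $\lambda^2=(a^2\cos^2\alpha+b^2\sin^2\alpha)-p^2$.

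Substituting the expressions for $\alpha$ and $p$ and multiplying through by $d(\phi)^2$, the $a^2b^2\cos^2\omega$ terms and the $\sin\omega\cos\omega$ cross-terms cancel between $a^2\cos^2\alpha+b^2\sin^2\alpha$ and $p^2$, leaving
\[
\lambda^2 d(\phi)^2=\sin^2\omega\bigl(a^4\sin^2\phi+b^4\cos^2\phi-(a^2-b^2)^2\sin^2\phi\cos^2\phi\bigr).
\]
With $s=\sin^2\phi$ the bracket is the perfect square $(b^2+(a^2-b^2)s)^2=d(\phi)^4$ (compare coefficients of $s^0,s,s^2$), so dividing by $d(\phi)^2$ yields $\lambda^2=\sin^2\omega\,d(\phi)^2=\sin^2\omega\,(b^2+(a^2-b^2)\sin^2\phi)$, as claimed.

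The only genuine ingredient is the tangency criterion, which is classical; the only real ``obstacle'' is bookkeeping — keeping track of the sign produced by the inward normal and recognizing the bracket as a square. An alternative, normal-form-free route is to evaluate the Joachimsthal first integral of the elliptical billiard (which is constant along an orbit and pins down the confocal caustic through $q$) at $q(\phi)$ with the above ray direction and relate its value to the confocal parameter $\lambda$; this is conceptually cleaner but involves a comparable amount of algebra.
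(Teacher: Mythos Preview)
Your proof is correct and essentially the same approach as the paper's: both reduce to the algebraic tangency condition between the billiard line and $C_\lambda$ (the paper via the discriminant of the parametric substitution, you via the equivalent normal-form criterion $p^2=A\cos^2\alpha+B\sin^2\alpha$), followed by the same trigonometric simplification. Your packaging via the normal form is slightly tidier, but the content is identical.
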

\begin{proof}
For simplicity, consider the complexified parametrization of the ellipse given by $\gamma(\phi) = a \cos \phi + i b \sin \phi$. The tangent line at $\gamma(\phi)$ is then parametrized by $$(a \cos \phi + i b \sin \phi) + t(-a \sin \phi + i b \cos \phi).$$ Rotating this line counterclockwise by the angle $\omega$, we see that the billiard ray is parameteriszed by
$$
L_{\omega}(t,\phi) = (a \cos \phi + i b \sin \phi) + t e^{i\omega} (-a \sin \phi + i b \cos \phi).
$$
Taking real and imaginary parts, we find that
\begin{align*}
\Re L_\omega(t,\phi) &= a \cos \phi + t (-a \sin \phi \cos \omega - b \cos\phi \sin \omega),\\
\Im L_\omega(t,\phi) &= b \sin \phi + t(b\cos \phi \cos \omega - a \sin \phi \sin \omega).
\end{align*}
For a given $\phi, \omega$, there exist infinitely many caustics which intersect the line $L_\omega$. However, only one such caustic intersects $L_\omega$ at a single point of tangency. To find the parameter of this caustic, we look for a solution of the equation
$$
\frac{\Re (L_\omega(t,\phi))^2}{a^2 - \lambda^2} + \frac{\Im(L_\omega(t,\phi))^2}{b^2 - \lambda^2} = 1.
$$
This is a quadratic equation in the variable $t$ and a caustic corresponding to a point of tangency will give rise to a repeated root. Thus, to find $\lambda^2$, we set the discriminant of this equation equal to zero. For convenience let us put
\begin{align*}
A = &-a \sin \phi \cos \omega - b \cos\phi \sin \omega ,\\
B = & \quad b\cos \phi \cos \omega - a \sin \phi \sin \omega .
\end{align*}
If we set the discriminant equal to zero, we obtain
\begin{align*}
\left(\frac{a A \cos \phi}{a^2-\lambda^2}+ \frac{b B \sin \phi}{b^2-\lambda^2} \right)^2 = \left (\frac{A^2}{a^2 -\lambda^2}+ \frac{B^2}{b^2-\lambda^2} \right ) \left (\frac{a^2\cos^2\phi}{a^2-\lambda^2}+ \frac{b^2\sin^2\phi}{b^2-\lambda^2}-1 \right ).
\end{align*}
After some obvious cancellations and simplifications, multiplying both sides by $(a^2-\lambda^2)(b^2 - \lambda^2)$ gives
\begin{align*}
\lambda^2 = \frac{(Ab\cos \phi +Ba \sin \phi)^2}{A^2+B^2}.
\end{align*}
Two simple computations show that 
$$
A^2+B^2= a^2 \sin^2 \phi +b^2 \cos^2 \phi
$$
and
$$
Ab \cos \phi+ Bb \sin \phi = -\sin (\omega) (a^2 \sin^2 \phi + b^2 \cos^2 \phi).
$$
Plugging these into the above equation for $\lambda^2$ completes the proof of the lemma. 

\end{proof}
Differentiating the formula in Lemma \ref{angle caustic lemma} by $\omega$ gives
\begin{align}\label{differentiated lambda}
\frac{\d \lambda^2}{\d \omega} = 2 \sin \omega \cos \omega (b^2 + (a^2 - b^2)\sin^2 \phi),
\end{align}
which is the last term in \eqref{differentiated Bj 2} we needed to calculate.
\begin{rema}
In \cite{GuMe79b}, a different formula is derived in the usual circular polar coordinates with angular parameter $\alpha$. In this case,
$$
\lambda^2 = M(\alpha)(1 - \cos (2\omega)),
$$
where
$$
M(\alpha) = \frac{a^2 + b^2}{2} - \frac{a^2b^2}{a^2 + b^2 - (a^2 - b^2) \cos(2\alpha)}.
$$
The relationship between $\alpha$ and the elliptical angular coordinate $\phi$ is given by
$$
\alpha = \arctan\left(\frac{b}{a} \tan \phi \right).
$$
\end{rema}

\section{Converting the singularity into an elliptic integral}\label{Converting the singularity into an elliptic integral}

\subsection{Proof of $\dot{\rho} = 0$}\label{rho dot zero}
It now remains to convert the singularity expansion in Corollary \ref{distribution with Aj factor} into an elliptic integral, following the ideas in \cite{GuMe79a}. Recall that according to Corollary \ref{distribution with Aj factor}, the variation of the localized wave trace is given by
$$
\delta \text{Tr} \cos t \sqrt{-\Delta_\epsilon}
 = 4t \Re \left\{ e^{\sigma i \pi/4}(t - T_j - i0^+)^{-5/2}\right\} \int_{\d \Omega_0} |\nabla^\perp \Psi_j|^2 {|A_j(q,q)|^{1/2}} \dot{\rho}(q) \,dq + L.O.T.
$$
The computations in Section \ref{d omega d theta} combined with Lemma \ref{Aj w derivatives} lead us to the formula
\begin{align}\label{full Aj term}
|A_j| = \left| \frac{ f^5}{\sin{\omega}} \frac{\d\omega}{\d \theta} \right| = \left|  \frac{ f^5}{\sin{\omega}} \frac{1}{\frac{\d \lambda^2}{\d \omega } G(\lambda) \sqrt{1 - k_\lambda^2 \sn^2(t_\phi)}  }\right|,
\end{align}
on the diagonal of the boundary, where $G(\lambda)$ is given by \eqref{G of lambda} and $\frac{\d  \lambda^2}{\d \omega}$ is given by \eqref{differentiated lambda}. The formulas \eqref{G of lambda} and \eqref{d delta d lambda} show that $G$ is in fact a nonvanishing analytic function of $\lambda$. Putting this all together, we see that the principal term in the variation of the wave trace in Corollary \ref{distribution with Aj factor} is given by the product of the distribution
$$
4t \Re \left\{ e^{\sigma i \pi /4}(t - T_j - i0^+)^{-5/2}\right\}
$$
and the factor
\begin{align}\label{factor}
c_j = \int_{\d \Omega} \frac{\sin(\omega) |f(\mu_0,\phi)|^{5/2}}{\left|2 G(\lambda)\cos(\omega) (b^2 + (a^2 - b^2)\sin^2\phi )\sqrt{1 - k_\lambda^2 \sn^2(t_\phi; k_\lambda)} \right|^{1/2}} \dot{\rho}(\phi) \,dq(\phi).
\end{align}
Since we are evaluating on the diagonal of the boundary, Poncelet's Theorem (\ref{poncelet}) guarantees that the parameter $\lambda$ corresponding to a $j$-periodic geodesic is in fact independent of $\phi$. Hence, the $G(\lambda)$ factor can be pulled outside of the integral in \eqref{factor}. However, both $\omega$, and $t_\phi$ in the integrand depend on $\phi$, so it remains to compute this dependency and parametrize the boundary explicitly. Recall that Lemma \ref{angle caustic lemma} gives us
$$
\lambda^2 = \sin^2(\omega) \left (b^2 + (a^2-b^2) \sin^2(\phi) \right ).
$$
Since $\omega \notin \{0, \pi/2, \pi\}$ for orbits which are tangent to a confocal ellipse, this equation determines the trigonometric terms appearing in the integrand of \eqref{factor} up to a sign:
\begin{align}\label{trig terms}
\begin{split}
\sin \omega &= \sqrt{\frac{\lambda^2}{b^2 + (a^2-b^2) \sin^2(\phi)}},\\
\cos \omega &= \pm \sqrt{1 - {\frac{\lambda^2}{b^2 + (a^2-b^2) \sin^2(\phi)}}}.
\end{split}
\end{align}
To simplify notation, set $C(\phi) = \left (b^2 + (a^2-b^2) \sin^2(\phi) \right )$. Recalling that in elliptic coordinates, if we fix
$$
\mu_0 = \cosh^{-1}\left(\frac{a}{\sqrt{a^2 - b^2}}\right),
$$
then we obtain a parametrization of $\d \Omega_0$ in terms of $\phi \in \R / 2\pi \Z$, given by
$$
\gamma(\phi) = (a\cos \phi, b \sin \phi).
$$
In these coordinates, the line element on $\d \Omega_0$ is
$$
dq(\phi) = \sqrt{a^2 \cos^2 \phi + b^2 \sin^2\phi} d\phi.
$$
Recalling that $\cos \phi = - \sn(t_\phi; k_\lambda)$ due to our convention \eqref{phase shift} on the phase shift, a simple computation using formula \eqref{trig terms} then shows that
$$
\cos \omega \sqrt{1 - k_\lambda^2 \sn^2(t_\phi;k_\lambda)} = \frac{1}{|C(\phi) (a^2 - \lambda^2)|^{1/2}}(C(\phi) - \lambda^2),
$$
which simplifies the integrand in \eqref{factor} to
\begin{align}
\left|\frac{\lambda^2 f(\phi)^5}{2G(\lambda)} \right|^{1/2} \left| \frac{a^2 - \lambda^2}{C(\phi)} \right|^{1/4} \frac{\gamma^*\dot{\rho}}{\sqrt{C(\phi) - \lambda^2}}.
\end{align}
Reinserting this into the integrand in \eqref{factor}, we obtain
\begin{align}\label{wave trace full expansion}
\int_0^{2\pi} \left| \frac{\lambda^2 f(\phi)^5 (a^2 \cos^2\phi + b^2 \sin^2 \phi)}{2G(\lambda)} \right|^{1/2} \left| \frac{a^2 - \lambda^2}{C(\phi)} \right|^{1/4} \frac{\gamma^*\dot{\phi} d\phi}{\sqrt{C(\phi) - \lambda^2}},
\end{align}
from which the first formula in Theorem \ref{wavetrace variation} follows.\\
\\
We now set the integral in equation \eqref{wave trace full expansion} equal to zero. Since $\lambda$ is nonzero and independent of $\phi$, we can divide out the separated terms depending only on $\lambda$ from both sides and what is left will be a nonzero analytic function $F(\phi)$ multiplied by $\dot{\rho}(\phi) (C(\phi) - \lambda^2)^{-1/2}$:
\begin{align}\label{2pi period}
\int_0^{2\pi} \frac{F(\phi) \gamma^* \dot{\rho}(\phi) d\phi}{\sqrt{C(\phi) - \lambda^2}} = 0.
\end{align}
Here, we have written
$$
F(\phi) = \left|f(\mu_0, \phi)^5 (a^2 \cos^2\phi + b^2 \sin^2 \phi) \right|^{1/2} \left| {C(\phi)} \right|^{-1/4}.
$$
Using the $\Z_2 \times \Z_2$ symmetry condition on $\dot{\rho}$, the equation \eqref{2pi period} reduces to
\begin{gather}\label{symmetrized integral}
\int_0^{\pi/2} \frac{F(\phi) \gamma^* \dot{\rho}(\phi) d\phi}{\sqrt{C(\phi) - \lambda^2}} = 0.
\end{gather}
We now recall that since the singularity expansion is localized near a simple length $T_j$, we have $\lambda = \lambda_j$ and $\lambda_j \to 0$ as $j \to \infty$. The expression \eqref{symmetrized integral} is actually analytic in the parameter $\lambda^2$ and vanishes at each $\lambda_j^2$. Since it has an accumulation point of zeros, it is actually flat at $\lambda = 0$. Differentiating $k$ times under the integral \eqref{symmetrized integral} in the parameter $\lambda^2$ and evaluating at $\lambda = 0$, we see that 
$$
\int_0^{\pi/2} \frac{F(\phi) \gamma^* \dot{\rho}(\phi) d\phi}{|C(\phi)|^{1/2 + k}} = 0.
$$
It is clear that the functions $|C(\phi)|^{-k}$ form a subalgebra of $C(S^1)$. Since we have restricted the domain to $(0,\pi/2)$, this subalgebra also separates points, and hence by the Stone-Weierstrass theorem,
\begin{align}\label{vanishing}
\frac{F(\phi) \gamma^* \dot{\rho}(\phi)}{|C(\phi)|^{1/2}} \equiv 0.
\end{align}
Since $F$ and $C$ are nonvanishing, equation \eqref{vanishing} implies that $\dot{\rho} = 0$.

\subsection{Proof of $\dot{K} = 0$}
We now return to the first variation of the Robin function $K$. According to Theorem $\ref{wavetrace}$,
$$
\delta \text{Tr} \cos t \sqrt{-\Delta_\epsilon} = \int_{\d \Omega} L_R^b(t,q,q)\dot{\rho} + \frac{t}{2}S_R(t,q,q)\dot{K}\,dq.
$$
Since we have just shown that $\dot{\rho} = 0$ in Section \ref{rho dot zero}, the variational trace formula in Theorem \ref{wavetrace} becomes
\begin{align}\label{wave trace for K dot}
\delta \text{Tr} \cos t \sqrt{-\Delta_\epsilon} = \int_{\d \Omega} \frac{t}{2}S_R(t,q,q)\dot{K}\,dq.
\end{align}
In Section \ref{Computing the singularity in elliptical polar coordinates}, we cooked up an oscillatory integral representation for $S_j$ microlocally near the canonical relations $\Gamma_\pm^j$ lying over an open neighborhood of the diagonal of the boundary (Theorem \ref{parametrix with sum}). Without the $L_R^b \dot{\rho}$ terms, we do not differentiate the sine kernel in the integrand of \eqref{wave trace for K dot}. Again following the formulas for homogeneous distributions in \cite{Ho90}, we see that near $t = T_j$,
$$
\delta \text{Tr} \cos t \sqrt{-\Delta_\epsilon} = -8t \Re\left\{e^{\sigma i \pi/4} (t - T - i0^+)^{-1/2}\right\} \int_{\d \Omega} |A_j^k|^{1/2} dq,
$$
where the minus sign is due to the appearance of the same Maslov index $\sigma$ appearing in the principal term. Plugging in the formula \eqref{full Aj term} for $A_j$ gives the second formula in Theorem \ref{wavetrace variation}. Similar computations to those in Section \ref{rho dot zero} lead us to the equation
$$
\int_0^{\pi/2} \left| \frac{f^5 (a^2 \cos^2 \phi + b^2 \sin^2 \phi) \sqrt{C(\phi)} \sqrt{a^2 -\lambda_j^2}}{2 G(\lambda) \lambda_j^2}\right|^{1/2} \frac{\dot{K}(\phi) d\phi}{\sqrt{C(\phi) - \lambda_j^2}} = 0.
$$
Discarding the separated terms depending only on $\lambda_j$ and Taylor expandng at $\lambda = 0$ as before, we see via the Stone-Weierstrass theorem that $\dot{K} = 0$, which concludes the proof of Theorem \ref{main}.
\\
\\
\begin{rema}
In \cite{HeZe12}, the principal symbol computation for Neumann boundary conditions follows closely the work of \cite{DuGu75}. The principal symbols for the Neumann and Robin wave propagators agree and the computations in \cite{HeZe12} yield
$$
\delta \text{Tr}\cos(t \sqrt{-\Delta}) \sim \frac{t}{2} \Re \left\{ \left( \sum_{\Gamma \subset F_T} C_\Gamma \int_{\Gamma}  \dot{\rho} \gamma_1\,d\mu_L \right)(t - T+i0^+)^{-{5/2}}\right\},
$$
where the sum is over connected components $\Gamma$ of the fixed point set $F_T$
and $\gamma_1 \in C(B^*\d \Omega)$ is given by $\gamma_1(q,\zeta) = \sqrt{1 - |\zeta|^2}$. The coefficients $C_\Gamma$ are nonzero Maslov factors coming from the stationary phase computation in \cite{HeZe12} and $\mu_L$ is the {Leray measure} on $\Gamma \subset B^*\d \Omega$, which is computed in \cite{GuMe79a}. In our computations, all of these factors are explicit.
\end{rema}


\section{Proof of Lemma \ref{phase function}}\label{proof of lemma}
This section is dedicated to the proof of Lemma \ref{phase function}. Fix $j\in \Z$ large and choose a corresponding tubular neighborhood $U$ of $\d \Omega$ with the following property: \textit{for any $x \in U$, the $j$-fold broken geodesic emanating from $x$ which is tangent to the confocal ellipse on which $x$ lies makes less than a quarter rotation.} By this we mean that if $x = (a \cosh \mu_x \cos \phi_x, b \sinh \mu_x \sin \phi_x)$, then the $j+1$ impact point at the boundary has angular component in the interval $\phi_x \leq \phi_{x,j} \leq \phi_x + \pi/2$. This is certainly possible since the billiard flow is continuous on the closure of its phase space and if $x$ is on the boundary, the corresponding orbit is stationary. Denote by $C_{\lambda_x}$ the confocal ellipse on which $x$ lies. We will perturb this orbit by holding $x$ fixed and increasing the parameter $\lambda^2$ of the confocal caustic
$$
C_\lambda = \left\{z \in \R^2 : \frac{z_1^2}{a^2 - \lambda^2} + \frac{z_2^2}{b^2 - \lambda^2} = 1 \right\},
$$
to which the orbit is tangent. This can be done by rotating the initial covector of the trajectory slightly in any direction within $S_x^*(\R^2)$ so that it makes a nonzero angle with the tangent line to $C_{\lambda_x}$ at $x$.  As $\lambda^2$ increases, the associated angle also increases, the confocal ellipses shrink and heuristically, the $j$-fold broken geodesic begins to rotate more and more around $\Omega$. This is precisely the twist property of the billiard map: for a fixed point $x$ in the base, the straight line in phase space obtained by letting the angular component vary becomes twisted under iteration of the billiard map. If $y$ is another point which is sufficiently close to $x$, we claim that there exist $8$ angles in $S_x^*(\R^2)$ such that the last link of the corresponding billiard trajectory intersects $y$.
\\
\\
Four of these orbits will be oriented in the counterclockwise direction and four in the clockwise direction. Of the four counterclockwise orbits, two of them will correspond to rotating the initial covector in $S_x^*(\R^2)$ in the counterclockwise direction and two will result from a rotation in the opposite direction. We show that in elliptical polar coordinates, the angular components of both intersection points of the last link (after $j$ reflections) with the confocal ellipse on which $y$ lies are increasing as we rotate the inital covector within either direction in $S_x^*(\R^2)$. Hence, both intersection points will wind around the confocal ellipse until they eventually coincide with $y$. The clockwise orbits will then be constructed by a simple reflection argument.

\subsection{Notation}
Let $\lambda_x$ and $\lambda_y$ denote the parameters of the confocal ellipses on which $x$ and $y$ lie respectively. If $\lambda_x > \lambda_y$, then it is clear that the last link of any billiard emanating from $x$ will intersect $C_{\lambda_y}$ exactly twice. If $\lambda_y > \lambda_x$ but $x$ and $y$ are sufficiently close, then $\lambda_x$ and $\lambda_y$ are also close, so we can arrange that the billiard emanating from $x$ which is tangent to $C_{\lambda_y}$ makes less than a half rotation. Hence, the orbits making approximately one full rotation will necessarily have final links which intersect $C_{\lambda_y}$ twice. We only consider such $x,y$, which lie in an open neighborhood of $\Delta \d \Omega \subset \Omega \times \Omega$, where $\Delta : \d \Omega \to \d \Omega \times \d \Omega$ is the diagonal embedding.
\\
\\
The aim is to prove that the angular components of both intersection points of the last link with the caustic $C_{\lambda_y}$ are increasing. To do this, we will consider a variant of the map $B_j$ defined in Section \ref{Computing the singularity in elliptical polar coordinates}. Recall that we defined
\begin{align*}
\begin{cases}
B_j : S^1 \times (0,\pi) \to S^1,\\
B_j(\phi,\omega(\phi, \theta)) = \pi_1 \circ \beta^{j+1}(\phi, \omega) = \theta,
\end{cases}
\end{align*}
where $\beta$ is the billiard map on the coball bundle $B^*(\d \Omega)$ of the boundary. Since the coball bundle of the boundary is diffeomorphic to the collection of inward facing covectors in $S_{\d \Omega}^*(\R^2)$ and the latter is more geometrically natural, we lose no generality by considering $B_j$ or $\beta$ as a map on $S_{\d \Omega}^*(\R^2)$. We've assumed that $x, y \in \text{int} \Omega$, so we must first flow to the boundary in order to study the billiard map. Let $\alpha \in S^1$ and denote by $(x_1(\alpha),\omega_1(\alpha))$ the point obtained by evolving $(x,\alpha)$ under the forward billiard {flow} for $t_+^1(x,\alpha)$ units of time and then reflecting at the boundary. Recall from Section \ref{Billiards} that we defined
$$
t_{\pm}^1(y,\alpha) = \inf\{t > 0 : g^{\pm t}(y,\alpha) \in  \d \Omega \}.
$$
To the point $(x_1(\alpha), \omega_1(\alpha)) \in S_{\d \Omega}^*(\R^2)$ which is now fixed, we may apply $B_{j-1}$ to obtain the angular component of the $j$th impact point. Denote the corresponding boundary point by
\begin{align*}
x_j(\alpha) = (a \cos B_{j-1}(x_1(\alpha), \omega_1(\alpha)), b\sin B_{j-1}(x_1(\alpha), \omega_1(\alpha))).
\end{align*}
Denote by $\alpha_x$ the angle which the positively oriented tangent line $T_x C_{\lambda_x}$ makes with the positive $x$ axis. If $\alpha$ is sufficiently close to $\alpha_x$, the first $j$ iterates under billiard map will also make less than a quarter rotation by our original set up. Our strategy is to let $\alpha$ vary within an open cone of directions which parametrizes all possible counterclockwise orbits emanating from $x$, and show that precisely $4$ of these angles result in orbits which reach $y$ after $j$ reflections, making approximately one rotation.\\
\\
For each $\alpha$, there exists a unique $\lambda$ such that the corresponding orbit is tangent to the caustic $C_\lambda$. For example, $\lambda_x$ corresponds to $\alpha_x$.

\begin{def1}
Let $U_x^*$ be the set of $\alpha \in S_x^* (\R^2)$ such that the corresponding orbits avoid the region between the focal points. Then, denote the homogeneous extension of $U_x^*$ to $T_x^* \R^2$ by $C_x^*$, which is precisely the fiber cone at $x$ of admissible initial covectors we consider. 
\end{def1}

\begin{rema}
If $x$ varies smoothly in $\Omega$, so do the associated fibers and in this way, we obtain a smooth cone bundle, which we denote by $C^*(\Omega) \subset T^*(\Omega)$, over a tubular neighborhood of $\d \Omega$.
\end{rema}
\subsection{Derivatives of $\beta$}
If $\alpha$ increases, so does the parameter $\lambda$ of the confocal caustic, i.e. $d \lambda / d \alpha > 0$ for $\alpha > \alpha_x$. Similarly, as $\alpha$ decreases, the parameter $\lambda$ also increases, as can be seen by considering the backwards (clockwise) orbit. Hence, $\lambda_x$ is a local minimum for $\alpha$ near $\alpha_x$. By the implicit function theorem, it is clear that $\lambda$ is a smooth function of $\alpha$ as long as the corresponding forwards and backwards orbits do not enter between the focal points. Let us first consider the case in which $\alpha$ increases. For $\alpha > \alpha_x$, there is a one to one correspondence between $\alpha$ and $\lambda$. In this case, it is geometrically clear that the angular component of $x_1$ is also increasing in $\alpha$ and $\d x_1/\d \alpha$ is bounded independently of $j, \lambda$, $\omega$.
\\
\\
To obtain a large lower bound on the speed at which the two intersection points wind around $C_{\lambda_y}$, we first find a corresponding lower bound on
\begin{align}\label{Bj differentiated away from periodic orbit}
\frac{\d}{\d \lambda} B_j(x_1(\alpha(\lambda)), \omega_1(\alpha(\lambda))) = \frac{\d B_j}{\d x_1} \frac{\d x_1}{\d \alpha}\frac{ \d \alpha}{ \d \lambda} + \frac{\d B_j}{\d \omega} \frac{\d \omega}{\d \alpha} \frac{ \d \alpha}{ \d \lambda}. 
\end{align}
By a slight abuse of notation, we have systematically confused $x_1$ with its angular component $\theta(x_1)$, since they are in one to one correspondence modulo factors of $2\pi$. We begin with a simple lemma which will be used throughout the section.
\begin{lemm}\label{lambda small}
Let $x, y \in \Omega$ be $O(1/j)$ close to the diagonal of the boundary and $\gamma$ be an orbit of $j$ reflections which is tangent to the caustic $C_\lambda$ and connects $x$ to $y$. Also denote by $\omega_k$ ($1 \leq k \leq j$) the angle of reflection made at the $k$th impact point on the boundary. Then $\lambda = O(1/j)$ and $\omega_k = O(1/j)$ for all $1 \leq k \leq j$.
\end{lemm}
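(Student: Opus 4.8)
\emph{Proof proposal.} The whole statement reduces to a bound on the single caustic parameter $\lambda$. Applying Lemma~\ref{angle caustic lemma} at each of the $j$ boundary impact points $q_k=(a\cos\phi_k,b\sin\phi_k)$ of $\gamma$ gives $\lambda^2=\sin^2(\omega_k)\,\bigl(b^2+(a^2-b^2)\sin^2\phi_k\bigr)$, and since $b^2\le b^2+(a^2-b^2)\sin^2\phi_k\le a^2$ we obtain $\lambda/a\le\sin\omega_k\le\lambda/b$ for every $k$. Thus all the incidence angles are comparable to $\lambda$ with constants depending only on $a$ and $b$; in particular it suffices to prove $\lambda=O(1/j)$, after which $\omega_k\le\arcsin(\lambda/b)=O(1/j)$ is automatic.

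To bound $\lambda$ I would combine two elementary geometric facts about $\gamma$. First, since $\gamma$ is tangent to the confocal \emph{ellipse} $C_\lambda$ (and $\lambda\in(0,b]$, so the caustic is never a hyperbola), its impact points $q_1,\dots,q_j$ run monotonically around $\partial\Omega$; combined with the hypothesis of this section that $\gamma$ makes approximately one rotation (Lemma~\ref{phase function}) and that $x,y$ lie within $O(1/j)$ of $\Delta\,\partial\Omega$, this shows that, writing $L_k$ for the arclength of the boundary arc from $q_k$ to $q_{k+1}$, one has $\sum_k L_k\le 2\ell$ for $j$ large, where $\ell=|\partial\Omega|$. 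Second, smoothness and strict convexity of $\partial\Omega$ yield a constant $c=c(\Omega)>0$ with $L_k\ge c\,\omega_k$ whenever $\omega_k\le\pi/4$ and $L_k\ge c$ whenever $\omega_k\ge\pi/4$ (for small angles $L_k\sim 2\omega_k/\kappa(q_k)$).

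A dichotomy then finishes the argument. If $\omega_k\le\pi/4$ for all $k$, then $\omega_k\ge\sin\omega_k\ge\lambda/a$, so $2\ell\ge\sum_k L_k\ge (c/a)\,j\lambda$, giving $\lambda=O(1/j)$. Otherwise $\omega_{k_0}>\pi/4$ for some $k_0$, whence $\lambda^2\ge b^2/2$ by Lemma~\ref{angle caustic lemma}, and then $\sin^2\omega_k=\lambda^2/\bigl(b^2+(a^2-b^2)\sin^2\phi_k\bigr)\ge b^2/(2a^2)$ for \emph{every} $k$; this bounds each $L_k$ below by a fixed positive constant independent of $k$ and $j$, so $\sum_k L_k\to\infty$ as $j\to\infty$, contradicting $\sum_k L_k\le 2\ell$. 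Hence for $j$ large only the first alternative occurs, so $\lambda=O(1/j)$, and the lemma follows.

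\emph{Main obstacle.} The only point needing real care is uniformity in $k$: a priori one impact angle could remain large as $j\to\infty$. Lemma~\ref{angle caustic lemma} dissolves this by pinning every $\omega_k$ to the single parameter $\lambda$, so one large angle forces all of them to be large and contradicts the $O(1)$ bound on total boundary arclength that comes from $\gamma$ making only one rotation. An equivalent route runs through action--angle coordinates, where the billiard map is translation by $\delta_\lambda$: then $j\,\delta_\lambda$ is one period $4K(k_\lambda)$ up to a bounded factor, so $\delta_\lambda=O(1/j)$, and since $\delta_\lambda=2F(\arcsin(\lambda/b);k_\lambda)\ge 2\lambda/b$ this again gives $\lambda=O(1/j)$; the dichotomy above simply avoids the elliptic-function estimates.
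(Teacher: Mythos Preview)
Your argument is correct. The reduction to bounding the single parameter $\lambda$ via Lemma~\ref{angle caustic lemma}, together with the monotone-winding/arclength dichotomy, works as written, and you have correctly read the implicit hypothesis from the surrounding section that $\gamma$ makes approximately one rotation.

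Your route, however, is genuinely different from the paper's. The paper proceeds entirely through the action--angle description: it writes $\pi_1\widehat{\beta}^j(\widetilde s,\omega_1)=\widetilde s+j\delta_\lambda$, reads off $\delta_\lambda=\ell/j+O(j^{-2})$ from the ``approximately one rotation'' inequality, and then uses the explicit elliptic-integral formula $\delta_\lambda=2F(\arcsin(\lambda/b);k_\lambda)$ together with the trivial bound $F(\phi;k)\ge\phi$ to conclude $\lambda=O(1/j)$; the bound on $\omega_k$ then follows from Lemma~\ref{angle caustic lemma} exactly as you do. This is precisely the ``equivalent route'' you sketch in your final paragraph. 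What you present as your main argument instead replaces the elliptic-function estimate $\delta_\lambda\ge 2\lambda/b$ by the elementary chord/arc bound $L_k\ge c\,\omega_k$ coming from strict convexity, and replaces the one-line identity $j\delta_\lambda\approx\ell$ by the cruder total-arclength bound $\sum_k L_k\le 2\ell$. The payoff is that your primary argument is more robust---it uses nothing about the ellipse beyond strict convexity and the existence of a caustic parameter controlling all reflection angles---whereas the paper's proof leans on the action--angle machinery already in place from Section~\ref{d omega d theta} and is a couple of lines shorter in that context.
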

\begin{proof}
Recall that by $\gamma$ making approximately one rotation we mean that $|\widehat{\beta}(\widehat{x}, \omega_1) - \widehat{y} - \ell| < \ell / 100$ (see the remarks following Lemma \ref{phase function}). If in action angle coordinates (cf. Section \ref{d omega d theta}), $\widehat{x}$ is given by $q_\lambda(s) = (-a \sn (s,k_\lambda), b \cn(s, k_\lambda))$ for some $s \in \R$, we have that the link
\begin{align*}
q_\lambda(s + \delta_{\lambda}) - q_\lambda(s) = (-a \sn (s + \delta_\lambda,k_\lambda), b \cn(s + \delta_{\lambda}, k_\lambda)) -  (-a \sn (s,k_\lambda), b \cn(s, k_\lambda)),
\end{align*}
is tangent to the confocal ellipse $C_\lambda$. Lifting to the universal cover $\R \times (0,\pi)$ as in Section \ref{Billiards} and setting $\widetilde{s} \in \R / \ell \Z$ to be the arclength parameter corresponding to $s$, we have $\pi_1 \widehat{\beta}(\widetilde{s}, \omega_1) = \widetilde{s} + \delta_{\lambda}$ where $\pi_1$ is projection onto the first factor. Similarly,
\begin{align}\label{j iterates}
\pi_1 \widehat{\beta}^j(\widetilde{s},\omega_1) = \widetilde{s} + j \delta_{\lambda}.
\end{align}
If $y$ is given by $(-a \sn (t,k_\lambda), b \cn(t, k_\lambda))$ in action angle coordinates and $\widetilde{t}$ is the associated arclength parameter, then $x$ being $O(1/j)$ close to $y$ and the boundary is equivalent to requiring $|\widetilde{s} - \widetilde{t}| = O(1/j)$. Then \eqref{j iterates} implies that
\begin{align}
|\pi_1 \widehat{\beta}^j(\widetilde{s},\omega_1) - \widehat{y} - \ell| = |\widetilde{s} + j \delta_{\lambda}  - \widetilde{t} - \ell + O(1/j)| \leq \ell / 100.
\end{align}
Hence, $\delta_{\lambda} = \ell / j + O(j^{-2})$. Note that
\begin{align}
2 \arcsin \lambda/b \leq \delta_{\lambda}  = 2 \int_0^{\arcsin(\lambda/b)} \frac{d\tau}{\sqrt{1 - k_\lambda^2 \sin^2 \tau}} \leq {2}{\sqrt{1- \lambda^2 k_\lambda^2/ b}} \arcsin \lambda/b.
\end{align}
As ${\sqrt{1- \lambda^2 k_\lambda^2/ b}} = 1 + O(\lambda)$ and $\arcsin \lambda / b = \lambda/b + O(\lambda^2)$, $\delta_{\lambda} = O(1/j)$ immediately implies that $\lambda = O(1/j)$. The relationship between $\omega_1$ and $\lambda$ is given by Lemma \ref{angle caustic lemma}: $\lambda^2 = \sin^2 \omega_1 (b^2 + (a^2 - b^2) \sin^2 \widehat{x})$. As the coefficient of $\sin^2 \omega_1$ is bounded above by $a$ and below by $b$, $\lambda$ and $\omega_1$ are of the same order near zero. Furthermore, each link of $\gamma$ is tangent to $C_\lambda$ for a fixed $\lambda = O(1/j)$ and hence, the same logic also implies that $\omega_k = O(1/j)$ for each $1 \leq k \leq j$. This concludes the proof of the lemma.
\end{proof}
\noindent We are now ready to estimate \eqref{Bj differentiated away from periodic orbit}.
\begin{lemm}\label{billiard iterates}
For $j$ large, we have
\begin{align*}
\frac{\d}{\d \lambda} B_j(x_1(\alpha(\lambda)), \omega_1(\alpha(\lambda))) \gtrsim j
\end{align*}
\end{lemm}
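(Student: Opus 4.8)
The plan is to estimate the two terms on the right-hand side of \eqref{Bj differentiated away from periodic orbit} separately, using the twist property encoded in the action-angle description of Section \ref{d omega d theta}. The dominant contribution comes from the $(j+1)$-fold composition, which in action-angle coordinates is simply translation by $(j+1)\delta_\lambda$; since $\delta_\lambda = O(1/j)$ but $\frac{d\delta_\lambda}{d\lambda}$ blows up like $\lambda^{-1} \sim j$ near $\lambda = 0$ (by the first term of \eqref{d delta d lambda}), each factor contributes a derivative of size $\sim 1$, and there are $j+1$ of them, producing the $\gtrsim j$ lower bound. The subtlety is that $B_j$ is evaluated not at a periodic point but at $(x_1(\alpha), \omega_1(\alpha))$, so I cannot directly reuse \eqref{differentiated Bj at periodic orbit}; I must instead run the chain rule in full.

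First I would pass to action-angle coordinates $(s,\lambda)$ as in Section \ref{d omega d theta}, writing $B_j(x_1(\alpha),\omega_1(\alpha)) = \pi/2 + \am(t_{x_1} + j\delta_\lambda; k_\lambda)$, where $t_{x_1}$ is the action coordinate of the point $x_1(\alpha)$ on the caustic $C_\lambda$. Differentiating in $\lambda$ (using $\frac{d\alpha}{d\lambda}$ smooth and nonzero for $\alpha > \alpha_x$, as established right before the lemma) gives, by the chain rule analogous to \eqref{differentiated Bj 2},
\begin{align*}
\frac{d}{d\lambda} B_j = \frac{\d\am}{\d s}\left(\frac{\d t_{x_1}}{\d\lambda} + j\frac{d\delta_\lambda}{d\lambda}\right) + \frac{\d\am}{\d k_\lambda^2}\frac{dk_\lambda^2}{d\lambda}.
\end{align*}
By Lemma \ref{lambda small}, $\lambda = O(1/j)$ along the relevant orbits, so $k_\lambda^2 = \frac{a^2-b^2}{a^2-\lambda^2}$ is bounded away from $1$, whence $\frac{\d\am}{\d s} = \sqrt{1 - k_\lambda^2\sn^2} \gtrsim 1$, and $\frac{\d\am}{\d k_\lambda^2}$, $\frac{dk_\lambda^2}{d\lambda}$, $\frac{\d t_{x_1}}{\d\lambda}$ are all $O(1)$ (the last because $x_1$ depends on $\alpha$ with derivative bounded independently of $j$, as noted before the lemma, and $t_{x_1}$ depends smoothly on $\alpha$ and $\lambda$ away from the focal segment). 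The only term that grows is $j\frac{d\delta_\lambda}{d\lambda}$. From \eqref{d delta d lambda}, $\frac{d\delta_\lambda}{d\lambda^2} = \frac{1}{b\lambda\sqrt{1-k_\lambda^2\lambda^2/b^2}\sqrt{1-\lambda^2/b^2}} + (\text{bounded})$, so $\frac{d\delta_\lambda}{d\lambda} = 2\lambda\frac{d\delta_\lambda}{d\lambda^2} \gtrsim 1$ uniformly (the singular factor $\lambda^{-1}$ is exactly cancelled, leaving a positive quantity bounded below); hence $j\frac{d\delta_\lambda}{d\lambda} \gtrsim j$. Combining, $\frac{d}{d\lambda}B_j \gtrsim j - O(1) \gtrsim j$ for $j$ large, which is the claim.

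The main obstacle I anticipate is bookkeeping the sign and the uniformity: one must check that the $O(1)$ terms cannot conspire to cancel the main term, i.e. that $\frac{d\delta_\lambda}{d\lambda}$ is bounded \emph{below} by a positive constant uniformly over the range of $\lambda = O(1/j)$ and over the admissible cone of angles $\alpha \in C_x^*$, and that the same holds after multiplying by $\frac{\d\am}{\d s}$ (which requires $k_\lambda$ bounded away from $1$, guaranteed by Lemma \ref{lambda small}). A secondary point is justifying that $t_{x_1}$ and its $\lambda$-derivative stay bounded when $x$ is interior but $O(1/j)$-close to the boundary; this follows by first flowing $(x,\alpha)$ to the boundary point $x_1$ (the flow time $t_+^1$ and its derivatives being controlled since the orbit stays away from the focal segment by definition of $C_x^*$) and then invoking smoothness of the action coordinate on the caustic. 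Once uniformity is in hand the estimate is immediate.
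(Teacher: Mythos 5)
Your proposal is correct and follows essentially the same approach as the paper: differentiate the action-angle representation $\am(t_{x_1} + (j+1)\delta_\lambda; k_\lambda)$, observe that the $(j+1)$-fold translation produces the $\gtrsim j$ lower bound because the $\lambda^{-1}$ singularity in $\frac{d\delta_\lambda}{d\lambda^2}$ cancels against the $2\lambda$ factor, and dominate the remaining $O(1)$ terms via Lemma \ref{lambda small}; the paper just routes the chain rule through $(x_1,\omega)$ via \eqref{Bj differentiated away from periodic orbit}, which is algebraically equivalent to your direct $\lambda$-differentiation once one notes $\frac{\d\lambda^2}{\d\omega}\frac{\d\omega}{\d\lambda} = 2\lambda$. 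One small slip in your summary sentence: it is $\frac{d\delta_\lambda}{d\lambda^2}$, not $\frac{d\delta_\lambda}{d\lambda}$, that behaves like $\lambda^{-1}$ near $\lambda=0$ — but your detailed computation gets this right by explicitly cancelling the singular factor, so the argument stands.
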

\begin{proof}
In Section \ref{Computing the singularity in elliptical polar coordinates}, we found a rather explicit expression for
\begin{align}\label{d Bj d omega}
\frac{\d B_j}{ \d \omega} = \frac{\d \am}{\d s} \left( \frac{\d t_\phi}{\d \lambda^2} \frac{\d\lambda^2}{\d\omega} + (j+1) \frac{d \delta_{\lambda}}{d \lambda^2} \frac{\d \lambda^2}{\d \omega} \right) + \frac{\d \am}{\d k_\lambda^2} \frac{d k_\lambda^2}{d \lambda^2} \frac{\d \lambda^2}{\d \omega},
\end{align}
where $x_1 \in \d \Omega$ is fixed and $\omega$ is evaluated at the critical angle ${\omega}(x_1)$ corresponding to a periodic orbit. Using the formulas in Section \ref{Computing the singularity in elliptical polar coordinates}, we can actually let $x_1$ vary in $\alpha$ and evaluate at any $\omega$; in particular, we can evaluate at $\omega = \omega_1$ corresponding to the angle of reflection at the first impact point on the boundary. Let us first examine the term in \eqref{d Bj d omega} with a coefficient of $(j+1)$. We see that
\begin{align}
\frac{\d \am}{\d s} &= \sqrt{1 - k_\lambda^2 \sn^2(t_\phi + (j+1) \delta_{\lambda}; k_\lambda)}, \label{first term}
\\
\frac{d \delta_{\lambda}}{d \lambda^2} &= \frac{1}{b \lambda \sqrt{1 - \frac{k_\lambda^2 \lambda^2}{b^2}}} \frac{1}{\sqrt{1 - \lambda^2/b^2}} +  \frac{2 k_\lambda^2}{a^2 - \lambda^2} \int_0^{\arcsin(\lambda/b)} \frac{\sin^2\tau d\tau}{(1- k_\lambda^2 \sin^2 \tau)^{3/2}}, \label{second term}
 \\
\frac{\d \lambda^2}{\d \omega} &= 2 \sin \omega \cos \omega (b^2 + (a^2 - b^2) \sin^2(\phi)). \label{third term}
\end{align}
If $j$ is suffciently large, we can ensure that $\lambda = O(1/j)$ is in turn small. Hence, \eqref{first term} can bounded below by $(1 - b^2/a^2)/2$ and recalling Lemma \ref{angle caustic lemma}, the product of $1/\lambda$ and $\sin \omega$ coming from equations \eqref{second term} and \eqref{third term} can be estimated below by $1/a$. For $\lambda \ll 1$, all of the remaining terms can easily be bounded below by a positive constant depending only on $a$ and $b$.  Hence, we have
$$
(j+1) \frac{\d \am}{\d s} \frac{d \delta_{\lambda}}{d \lambda^2} \frac{\d \lambda^2}{\d \omega} \gtrsim j.
$$
We now consider the first term $\frac{\d \am}{\d s} \frac{\d t_\phi}{\d \lambda^2} \frac{\d\lambda^2}{\d\omega}$ in \eqref{d Bj d omega}. The first and third factors of this product are clearly bounded above and below, independently of $\omega, \lambda$, and $j$ near $\lambda = 0$ by the same arguments as before. Recall from Section \ref{Computing the singularity in elliptical polar coordinates} that
$$
\frac{\d t_\phi}{\d \lambda^2} = \frac{k_\lambda^2}{(a^2 - \lambda^2)} \int_0^{\am(t_\phi;k_\lambda)} \frac{\sin^2 \tau d\tau}{(1- k_\lambda^2 \sin^2 \tau)^{3/2}},
$$
which is also clearly bounded in magnitude by a positive constant independent of $\omega, \lambda$ and $j$.
\\
\\
In a similar manner, we would like to estimate the final term $\frac{\d \am}{\d k_\lambda^2} \frac{d k_\lambda^2}{d \lambda^2} \frac{\d \lambda^2}{\d \omega}$
in \eqref{d Bj d omega}. Recall from Section \ref{Computing the singularity in elliptical polar coordinates} that
\begin{align*}
\frac{\d \am}{\d k_\lambda^2} &= - \sqrt{1 - k_\lambda^2 \sn^2(s;k_\lambda)} \int_0^{\am(s;k_\lambda)} \frac{\sin^2\tau d\tau}{(1 - k_\lambda^2 \sin^2 \tau)^{3/2}},\\
\frac{d k_\lambda^2}{d \lambda^2} &= \frac{k_\lambda^2}{(a^2 - \lambda^2)}.
\end{align*}
Both terms can be bounded independently of $\omega, \lambda$ and $j$. Combining this with the earlier bound for \eqref{third term}, we see that
\begin{align}\label{big part}
\frac{\d B_j}{\d \omega} \gtrsim j.
\end{align}
We also need to estimate the remaining terms in \eqref{Bj differentiated away from periodic orbit}. In particular, we must bound
$$
\frac{\d \omega}{\d \alpha} \frac{\d \alpha}{\d \lambda} = \frac{\d \omega}{\d \lambda}
$$
from below by a positive constant and
$$
\frac{\d B_j}{\d x_1} \frac{\d x_1}{\d \alpha}\frac{ \d \alpha}{ \d \lambda} = \frac{\d B_j}{\d x_1} \frac{\d x_1}{\d \lambda}
$$
in magnitude. Lemma \ref{angle caustic lemma} tells us that if $0 \leq \omega \leq \pi/3$, then
\begin{align*}
\frac{\d \lambda}{\d \omega} = \frac{1}{\cos \omega} \frac{1}{\sqrt{b^2 + (a^2 - b^2)\sin^2 \phi}} \leq \frac{2}{b},
\end{align*}
so that
\begin{align}\label{product factors}
\frac{\d \omega}{\d \lambda} \geq \frac{b}{2}.
\end{align}
It is also geometrically clear that
\begin{align}\label{term 1 small}
\left|\frac{\d x_1}{\d \alpha} \frac{\d \alpha}{\d \lambda}\right| = \left|\frac{\d x_1}{\d \lambda}\right| \lesssim 1.
\end{align}
By writing out $B_j$ in action angle coordinates, we see that
\begin{align}\label{term 1 easy}
\frac{\d B_j}{\d x_1} = \frac{\d \am}{\d s} (t_\phi + (j+1) \delta_{\lambda}; k_\lambda) \frac{\d t_\phi}{\d \phi},
\end{align}
where $t_\phi$ is defined implicitly by the equation $\phi = \am(t_\phi; k_\lambda) +\pi/2$. Hence, by the implicit function theorem and our previous bound for \eqref{first term}, $\frac{\d B_j}{\d x_1}$ is both nonnegative and bounded independently of $\omega, \lambda$ and $j$. Combining \eqref{big part}, \eqref{product factors}, \eqref{term 1 small} and \eqref{term 1 easy}, we see that
\begin{align*}
\frac{\d B_j}{\d \lambda} (x_1(\alpha(\lambda)), \omega(\lambda)) \gtrsim j.
\end{align*}
\end{proof}
\subsection{Intersection points}
We are now ready to prove that the intersection points with the last link wind monotonically around the caustic $C_{\lambda_y}$ on which $y$ lies as we increase $\lambda$. 
\noindent In tandem with Lemma \ref{billiard iterates}, we will also need information about how the final angle $\omega_j$ at the $j$th reflection depends on $\lambda$.
\begin{lemm}\label{omega derivative bounded}
If $\omega_j = \pi_2\beta^j(x_1, \omega_1)$ denotes the angle of reflection at the $j$th impact point on the boundary, then $\d \omega_j / \d \lambda = O(1)$, i.e. it's derivative bounded independently of $j$.
\end{lemm}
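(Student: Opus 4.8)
The plan is to derive the bound from the single algebraic relation of Lemma~\ref{angle caustic lemma}, feeding in the two quantitative facts already established: the smallness $\lambda = O(1/j)$, $\omega_k = O(1/j)$ from Lemma~\ref{lambda small}, and the growth $\partial\phi_j/\partial\lambda = O(j)$ of the impact points, which is essentially the content of Lemma~\ref{billiard iterates}. As throughout this section, I keep $x$ (and hence $y$) fixed and treat the caustic parameter $\lambda$ as the independent variable, with the initial angle $\alpha = \alpha(\lambda)$ and the first boundary state $(x_1,\omega_1) = (x_1(\alpha(\lambda)),\omega_1(\alpha(\lambda)))$ depending on it, exactly as in the chain rule \eqref{Bj differentiated away from periodic orbit}. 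Write $\phi_j$ for the angular (elliptical polar) coordinate of the impact point at which $\omega_j$ is measured, so that $\phi_j = \phi_j(\lambda)$ is smooth along this one-parameter family of orbits.

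Since every link of the orbit is tangent to the same confocal caustic $C_\lambda$, Lemma~\ref{angle caustic lemma} applied at that reflection gives
\[
\lambda^2 = \sin^2(\omega_j)\, C(\phi_j), \qquad C(\phi) := b^2 + (a^2 - b^2)\sin^2\phi .
\]
First I would differentiate this implicitly in $\lambda$ and solve for $\partial\omega_j/\partial\lambda$; substituting $\sin\omega_j = \lambda/\sqrt{C(\phi_j)}$ to cancel the powers of $\lambda$ produces the clean identity
\[
\frac{\partial\omega_j}{\partial\lambda} = \frac{1}{\cos\omega_j\,\sqrt{C(\phi_j)}}\left( 1 - \frac{\lambda\, C'(\phi_j)}{2\,C(\phi_j)}\,\frac{\partial\phi_j}{\partial\lambda}\right).
\]
Everything then reduces to estimating the right-hand side. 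The prefactor is $O(1)$: we have $C(\phi_j) \in [b^2, a^2]$, and $\cos\omega_j \geq \tfrac12$ for $j$ large because $\omega_j = O(1/j)$ by Lemma~\ref{lambda small}. In the parenthesis the constant term is $O(1)$, $C'(\phi_j)$ is bounded, $C(\phi_j) \geq b^2$, $\lambda = O(1/j)$ by Lemma~\ref{lambda small}, and $\partial\phi_j/\partial\lambda = O(j)$: the lower bound $\gtrsim j$ is Lemma~\ref{billiard iterates} (the constant shift in iterate index being immaterial), and the matching upper bound follows \emph{verbatim} from its proof, in which every factor in \eqref{Bj differentiated away from periodic orbit} is shown to be bounded except the single term carrying the coefficient $(j+1)$, which is manifestly $O(j)$. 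Hence $\lambda\,\partial\phi_j/\partial\lambda = O(1)$ and $\partial\omega_j/\partial\lambda = O(1)$ uniformly in $j$, which is the assertion.

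The one genuinely nontrivial ingredient is the exact cancellation of the $O(j)$ growth of $\partial\phi_j/\partial\lambda$ against the $O(1/j)$ smallness of $\lambda$, and this is precisely where the standing hypothesis that $x,y$ lie within $O(1/j)$ of the diagonal of the boundary is used; without it the twist property alone would give only an $O(j)$ bound. As an alternative to invoking Lemma~\ref{billiard iterates}, I would note that one can also obtain $\partial\phi_j/\partial\lambda = O(j)$ directly in the action--angle coordinates of Section~\ref{d omega d theta}: the $s$-parameter of the $j$th impact point is $t_\phi + (j-1)\delta_\lambda$, and since $\partial\,\am/\partial s = O(1)$ while $d\delta_\lambda/d\lambda = O(1)$ by \eqref{d delta d lambda} (the leading $1/(b\lambda)$ in $d\delta_\lambda/d\lambda^2$ being killed by the factor $2\lambda$), the claim is immediate.
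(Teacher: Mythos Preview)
Your proof is correct and follows essentially the same route as the paper: differentiate the caustic relation $\lambda^2 = \sin^2\omega_j\, C(\phi_j)$ implicitly in $\lambda$, solve for $\partial\omega_j/\partial\lambda$, and then observe that the only potentially unbounded factor $\partial\phi_j/\partial\lambda = O(j)$ is exactly cancelled by the $O(1/j)$ smallness of $\lambda$ (equivalently of $\sin\omega_j$), using Lemmas~\ref{lambda small} and~\ref{billiard iterates}. Your substitution $\sin\omega_j = \lambda/\sqrt{C(\phi_j)}$ before solving gives a slightly tidier final formula than the paper's, but the argument is the same.
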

\begin{proof}
Recall Lemma \ref{angle caustic lemma}, which gave
\begin{align*}
\lambda^2 = \sin^2 \omega_j (b^2 + (a^2 - b^2)\sin^2 x_j).
\end{align*}
Differentiating this as in the proof of Lemma \ref{billiard iterates} but now using that $x_j$ depends on $\lambda$, we see that
\begin{align}\label{differentiated angle}
2\lambda = 2 \frac{\d \omega_j}{\d \lambda} \sin \omega_j \cos \omega_j (b^2 + (a^2 - b^2) \sin^2 x_j) + 2 \sin^2\omega_j (a^2-b^2) \sin x_j \cos x_j \frac{\d x_j}{\d \lambda}.
\end{align}
Substituting again the formula in Lemma \ref{angle caustic lemma} into equation \eqref{differentiated angle}, we find that
\begin{align}\label{d omega d lambda}
\frac{\d \omega_j}{\d \lambda} = \frac{(b^2 + (a^2 - b^2) \sin^2 x_j)^{1/2} - \sin \omega_j (a^2 - b^2) \sin x_j \cos x_j \frac{\d x_j}{\d \lambda}}{(b^2 + (a^2 - b^2)\sin^2 x_j) \cos \omega_j}.
\end{align}
Lemma \ref{lambda small} tells us that $\lambda, \omega_j = O(1/j)$ and hence $\cos \omega_j = 1 + O(1/j), \sin \omega_j = O(1/j)$. This implies that the denominator of \eqref{d omega d lambda} is bounded below by a positive constant for $j$ larger than some fixed $j_0$ depending only on $\Omega$. The only unbounded term in the numerator is $\d x_j / \d \lambda = \d B_j/\d \lambda \gtrsim j$, as was shown in Lemma \ref{billiard iterates}. However, an examination of the proof of Lemma \ref{billiard iterates} actually shows that $\d x_j/\d \lambda = O(1/j)$ and hence, $\sin \omega_j \frac{\d x_j}{\d \lambda} = O(1)$, which implies that \eqref{d omega d lambda} is in fact bounded.
\end{proof}
\noindent With Lemmas \ref{lambda small}, \ref{billiard iterates} and \ref{omega derivative bounded}, we can prove half of Lemma \ref{phase function}:
\begin{lemm}\label{big bound}
For $j$ sufficiently large, the angular components in elliptical polar coordinates of both intersection points $\phi_j^1$ and $\phi_j^2$ of the final link with the caustic $C_{\lambda_y}$ are monotonically increasing in $\lambda$ with approximate speed $j$.
\end{lemm}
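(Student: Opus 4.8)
To prove Lemma \ref{big bound} the plan is to exhibit the two intersection points as smooth functions of the data of the final link and then differentiate in $\lambda$, feeding in the estimates already established. Write $\phi_j(\lambda)$ for the angular component, in elliptical polar coordinates, of the footpoint of the final link (the $j$-th impact point of the trajectory), and $\omega_j(\lambda)$ for the reflection angle there; thus the final link is the ray $L_{\omega_j}(\,\cdot\,,\phi_j)$ issued from that footpoint, in the notation of the proof of Lemma \ref{angle caustic lemma}. By Lemma \ref{lambda small} we have $\lambda,\omega_j=O(1/j)$; by the proof of Lemma \ref{billiard iterates} (applied to the relevant iterate of $\beta$) we have $\d\phi_j/\d\lambda\gtrsim j$, in fact with a two-sided bound $\d\phi_j/\d\lambda\asymp j$ and with positive sign; and by Lemma \ref{omega derivative bounded} we have $\d\omega_j/\d\lambda=O(1)$.

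First I would substitute $L_{\omega_j}(t,\phi_j)$ into the equation of $C_{\lambda_y}$, obtaining --- exactly as in the proof of Lemma \ref{angle caustic lemma} --- a quadratic polynomial $P(t;\phi_j,\omega_j)$ in $t$ whose two roots $t_\pm$ cut out the two points where the final link meets $C_{\lambda_y}$. These crossings are transversal and the roots simple throughout the relevant range of $\lambda$, because the link is tangent to the strictly interior caustic $C_\lambda$ and therefore cannot be tangent to $C_{\lambda_y}$; hence, by the implicit function theorem, $t_\pm$ and so the elliptical polar angular coordinates $\phi_j^1,\phi_j^2$ of the two intersection points are smooth functions of $(\phi_j,\omega_j)$ on this range. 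The chain rule then gives
\[
\frac{\d\phi_j^i}{\d\lambda}\;=\;\frac{\d\phi_j^i}{\d\phi_j}\,\frac{\d\phi_j}{\d\lambda}\;+\;\frac{\d\phi_j^i}{\d\omega_j}\,\frac{\d\omega_j}{\d\lambda},\qquad i=1,2.
\]

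At this point the lemma reduces to two bounds, uniform in $j$ for $j$ large: (i) $\d\phi_j^i/\d\phi_j$ is positive and bounded above and below by positive constants, and (ii) $\d\phi_j^i/\d\omega_j=O(1)$. The sign and positivity in (i) are geometric: translating the footpoint of the final link counterclockwise along $\d\Omega$, with the direction held fixed, pushes both of its intersection points with the convex curve $C_{\lambda_y}$ counterclockwise, because the footpoint lies outside $C_{\lambda_y}$. Granting (i) and (ii), for $j$ large the first term on the right dominates and is positive, so $\d\phi_j^i/\d\lambda\gtrsim j$ with positive sign, and the two-sided comparison with $j$ follows from the matching upper bounds in (i), (ii) and in Lemma \ref{billiard iterates}. (Alternatively one can parametrize the final link by its two boundary endpoints, whose angular coordinates both move with speed $\asymp j$ and in the same rotational direction by the twist property used in Lemma \ref{billiard iterates}, and run the same argument.)

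The main obstacle is the uniform bounds (i) and (ii). By Lemma \ref{lambda small} the entire configuration --- $C_\lambda$, $C_{\lambda_y}$ and the final link --- lies in an $O(1/j)$ neighborhood of the focal segment $[-c,c]$, where the confocal ellipses degenerate, so a priori the transversality angle of the crossings, and with it the Jacobian of $(\phi_j,\omega_j)\mapsto(\phi_j^1,\phi_j^2)$, could collapse as $j\to\infty$. I would control this by rescaling near the focal segment --- equivalently, by passing to action-angle coordinates, in which the billiard map becomes a translation --- so that $\d\Omega$, $C_{\lambda_y}$ and the family of final links converge to a nondegenerate limiting picture in which the crossings are transversal with a fixed angle; (i) and (ii) are then read off from this limit together with the uniformity of the convergence. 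The same conclusion can be reached directly from the quadratic formula for $t_\pm$: expanding the coefficients and discriminant of $P$ in powers of $1/j$ using $\lambda,\lambda_y,\omega_j=O(1/j)$ and differentiating yields the required upper and lower bounds. This is the one place in the proof where the smallness of $\lambda$ is used essentially rather than merely for convenience.
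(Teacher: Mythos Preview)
Your chain-rule decomposition is exactly the structure the paper uses. Writing the intersection-point equation and differentiating in $\lambda$, the paper arrives at $D_1\phi_j' = D_2 x_j' + D_3\omega_j'$, which is your display with $\d\phi_j^i/\d\phi_j = D_2/D_1$ and $\d\phi_j^i/\d\omega_j = D_3/D_1$. So the reduction to your bounds (i) and (ii) is correct in principle.

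There is a geometric slip: for $\lambda,\lambda_y$ small the caustics $C_\lambda,C_{\lambda_y}$ are \emph{large} confocal ellipses sitting just inside $\d\Omega$, not curves collapsing onto the focal segment $[-c,c]$. The degeneration you correctly anticipate is that the final link, being tangent to $C_\lambda$, becomes nearly tangent to the nearby curve $C_{\lambda_y}$ as $j\to\infty$; the transversality angle of the crossings is $O(1/j)$, not bounded below.

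This is where the real content lies, and your sketch does not cover it. The point is that $D_1$, $D_2$, $D_3$ \emph{all} vanish at the naive leading order ($\omega_j=0$, $\phi_j^i=x_j$, $c_j=a$, $s_j=b$), so your partial derivatives $D_2/D_1$ and $D_3/D_1$ are ratios of $O(1/j)$ quantities. The paper Taylor expands to first order in $1/j$ and finds $D_1 = D_2 + O(j^{-2})$ and $D_3=O(1/j)$; thus (i) and (ii) hold \emph{provided} $D_1$ is genuinely of order $1/j$ and not smaller. For the first intersection point this is easy ($\phi_j^1 = x_j + O(j^{-N})$, and $D_1\sim -\omega_j(a^2\sin^2 x_j + b^2\cos^2 x_j)$). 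For the second intersection point, where $\phi_j^2 = x_{j+1}+O(j^{-N})$, one gets
\[
D_1 \;=\; ab\,(x_{j+1}-x_j)\;-\;\omega_j\bigl(a^2\sin^2 x_j + b^2\cos^2 x_j\bigr)\;+\;O(j^{-2}),
\]
and the whole lemma hinges on proving this is strictly positive to first order in $1/j$. The paper does this by bounding $x_{j+1}-x_j$ below via the action-angle translation $\delta_\lambda$ (so $ab(x_{j+1}-x_j)\ge 2a\lambda + O(j^{-2})$) and expressing $\omega_j$ in terms of $\lambda$ via Lemma \ref{angle caustic lemma}, then checking the resulting inequality pointwise by maximizing the coefficient $(a^2\sin^2 x_j + b^2\cos^2 x_j)(b^2+(a^2-b^2)\sin^2 x_j)^{-1/2}$ over $x_j$. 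Your proposal to ``rescale'' or ``pass to action-angle coordinates'' is pointing in the right direction, but the limiting picture is not one with fixed transversality angle, and you would still need exactly this first-order comparison to conclude. Your alternative of expanding the quadratic in $1/j$ is precisely what the paper carries out; the missing ingredient in your write-up is the identification and verification of the inequality above.
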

\begin{proof}
Recall from the proof of Lemma \ref{angle caustic lemma} that the billiard ray emanating from $x_j$ at angle $\omega_j$ is parametrized by
\begin{align}\label{Lalpha}
L(t) = (a \cos x_j + i b \sin x_j) + t e^{i\omega_j} (-a \sin x_j + i b \cos x_j).
\end{align}
Taking real and imaginary parts, we find that
\begin{align*}
\Re L_{\omega_j}(t) &= a \cos x_j + t (-a \sin x_j \cos \omega_j - b \cos x_j \sin \omega_j),\\
\Im L_{\alpha_j}(t) &= b \sin x_j + t(b\cos x_j \cos \omega_j - a \sin x_j \sin \omega_j).
\end{align*}
For simplicity, denote by $A$ the coefficient of $t$ in \eqref{Lalpha}. Converting the line $L$ from parametric form, we find that
\begin{align}\label{nonparametric}
\frac{z - a \cos x_j}{-a \sin x_j \cos \omega_j - b \cos x_j \sin \omega_j} = \frac{w - b \sin x_j}{b \cos x_j \cos \omega_j - a \sin x_j \sin \omega_j}.
\end{align}
We know that $L$ has exactly two intersection points with
$$
C_{\lambda_y} = \left\{ (z,w) \in \R^2 : \frac{z^2}{a^2 - \lambda_y^2} + \frac{w^2}{b^2 - \lambda_y^2} = 1 \right\},
$$
the caustic on which $y$ lies. These correspond to two different values of $t$ in equation \eqref{Lalpha}. At either intersection point, the values of $x, y$ in \eqref{nonparametric} are constrained to be of the form $(c \cosh \mu_j \cos \phi_j, c \sinh \mu_j \sin \phi_j)$, where $c = \sqrt{a^2 - b^2}$, $\mu_j = \cosh^{-1}((a^2 - \lambda_y^2)^{1/2}/c)$ and $\phi_j \in [0,2\pi)$ is the angular parameter in elliptical polar coordinates for the caustic $C_{\lambda_y}$. For simplicity, denote
\begin{align*}
c_j = c \cosh \mu_j = (a^2 - \lambda_y^2)^{1/2}, \quad s_j = c \sinh \mu_j = (b^2 - \lambda_y^2).
\end{align*}
To begin, assume that $\phi_j$ is either the first intersection point or the second intersection point. Only at the end of the proof will we need to distinguish between the two cases. We want to show precisely that $\phi_j' = \frac{\d}{\d \lambda} \phi_j >0$ so that the intersection points wind around $C_{\lambda_y}$. Solving for $w$ in equation \eqref{nonparametric} and substituting $z = c_j \cos \phi_j, w = s_j \sin \phi_j$, we see that
\begin{align}\label{phi j}
s_j \sin \phi_j = (c_j \cos \phi_j - a \cos x_j) \frac{\Im A}{\Re A} + b \sin x_j.
\end{align}
Differentiating \eqref{phi j} and collecting terms, we have
\begin{align}\label{key eqn}
\begin{split}
D_1 \phi_j' =  D_2 x_j' + D_3 \omega_j',
\end{split}
\end{align}
where
\begin{align*}
\begin{split}
D_1 &= (s_j \cos \phi_j \Re A + c_j \sin \phi_j \Im A)\\
D_2 &=  (-s_j \sin \phi_j f + a \sin x_j \Im A + c_j \cos \phi_j g - a \cos x_j g + b \cos x_j \Re A + b \sin x_j f ),\\
D_3 &= (- s_j \sin \phi_j \Im A + c_j \cos \phi_j  \Re A  - a \cos x_j \Re A + b \sin x_j \Im A),
\end{split}
\end{align*}
and
\begin{align*}
\begin{split}
f = (-a \cos \omega_j \cos x_j + b \sin \omega_j \sin x_j),\\
g = (- a \sin \omega_j \cos x_j - b \cos \omega_j \sin x_j).
\end{split}
\end{align*}
As both intersection points are within $O(1/j)$ of $x_j$, we have also that $|x_j - \phi_j| = O(1/j)$ and $D_3 = O(1/j)$. However, setting $\omega_j = 0$, $\phi_j = x_j$, $c_j = a$, and $s_j = b$, we see that $D_1$, $D_2$ and $D_3$ all vanish. Instead we Taylor expand each coefficient to second order in $1/j$. Recall that we are free to choose $y$ as close to the boundary as we want, so $\lambda_y = O(j^{-N})$, $c_j = a + O(j^{-N}),$ and $s_j = b + O(j^{-N})$ for any $N \in \N$. Expanding $D_1$ and simplifying, we see that
\begin{align}\label{D1}
D_1 = ab\cos \omega_j \sin (\phi_j - x_j) - \sin \omega_j (a^2\sin^2 x_j + b^2 \cos^2 x_j) + O(j^{-2})).
\end{align}
Similarly, we obtain
\begin{align}\label{D2}
D_2 = ab \cos \omega_j \sin(\phi_j - x_j) - \sin \omega_j (a^2 \sin^2 x_j + b^2 \cos^2 x_j) + O(j^{-2}).
\end{align}
and
\begin{align}\label{D3}
D_3 = a^2 \sin x_j (\cos x_j - \cos \phi_j) + b^2 \cos x_j (\sin x_j - \sin \phi_j).
\end{align}
Hence, $D_1 = D_2 +O(j^{-2})$ and $D_3 = O(1/j)$. As $x_j' \gtrsim j$ by Lemma \ref{billiard iterates}, we are done as long as we can show that $D_1, D_2$ don't vanish to first order in $1/j$ so that we may divide through by them in equation \eqref{key eqn}. If $\phi_j$ is the first intersection point, then $\phi_j = x_j + O(j^{-N})$ for any $N$ so modulo $O(j^{-2})$ terms, $D_1$ becomes $- \omega_j (a^2 \sin^2 x_j \phi + b^2\cos^2 x_j) \leq - c \omega_j \leq - c'/j$ for some constants $c, c' >0$ depending only on the curvature of $\d \Omega$. Hence, we may assume $\phi_j$ is the second intersection point, in which case choosing $y$ sufficiently close to $\d \Omega$ amounts to setting $\phi_j = x_{j+1} + O(j^{-N})$ for any desired $N$. 
\\
\\
Simplifying further, we see that modulo terms of order $O(j^{-2})$,
\begin{align}\label{D1prime}
D_1 = ab (x_{j+1} - x_j) - \omega_j (a^2\sin^2 x_j + b^2 \cos^2 x_j).
\end{align}
We want to show that the positivity of the first term in \eqref{D1prime} outweights the second term.
Recall from Section \ref{d omega d theta} that $x_{j+1} - x_j = \am(s + \delta_{\lambda}, k_\lambda) - \am(s,k_\lambda)$ for some $s \in \R$ and $\lambda$ (not $\lambda_y$) corresponding to the caustic to which the orbit is tangent. Hence,
\begin{align}
({\am(s+ \delta_\lambda, k_{\lambda}) - \am(s,k_\lambda)}) \frac{1}{\sqrt{1- \lambda^2 k_\lambda^2/ b}} \geq \int_{\am(s,k_\lambda)}^{\am(s+\delta_{\lambda}, k_\lambda)} \frac{d\tau}{\sqrt{1 - k_\lambda^2 \sin^2 \tau}} = \delta_{\lambda},
\end{align}
which implies that
\begin{align}
\begin{split}
(x_{j+1} - x_j) &\geq \delta_{\lambda} \sqrt{1- \lambda^2 k_\lambda^2/ b}  = 2 \sqrt{1- \lambda^2 k_\lambda^2/ b}  \int_0^{\arcsin(\lambda/b)} \frac{d\tau}{\sqrt{1 - k_\lambda^2 \sin^2 \tau}}\\
&\geq {2}{\sqrt{1- \lambda^2 k_\lambda^2/ b}} \arcsin \lambda/b.
\end{split}
\end{align}
As $\lambda = O(1/j)$ for $j$ large, $\arcsin \lambda/ b = \lambda/b + O(j^{-2})$ and $(1- \lambda^2 k_\lambda^2 / b)^{1/2} = 1 + O(\lambda) = 1 + O(1/j)$. Hence, modulo terms of order $O(j^{-2})$, we have
\begin{align}\label{lower bound}
ab(x_{j+1} - x_j) \geq 2a\lambda.
\end{align}
Now recall Lemma \ref{angle caustic lemma}, which gives
$$
\lambda^2 = \sin^2(\omega_j) \left (b^2 + (a^2-b^2) \sin^2(x_j) \right ).
$$
Near $\omega_j = 0, \lambda = 0$, this tells us that
\begin{align*}
\omega_j = \lambda \left(b^2 + (a^2-b^2) \sin^2(x_j) \right )^{-1/2} + O(j^{-2}).
\end{align*}
The term in $D_1$ with a $\sin \omega_j$ can hence be estimated modulo $O(j^{-2})$ by
\begin{align}\label{coeff}
\omega_j (a^2\sin^2 x_j + b^2 \cos^2 x_j) = \lambda \frac{a^2\sin^2 x_j + b^2 \cos^2 x_j}{\sqrt{b^2 + (a^2 - b^2)\sin^2 x_j}} + O(j^{-2}).
\end{align}
The coefficient of $\lambda$ in \eqref{coeff} is positive and maximized at critical points corresponding to $\sin x_j = 0, \cos x_j = 0$ or
\begin{align}\label{critical point}
b^2 \cos^2 x_j = (a^2 - 2b^2) \sin^2 x_j + 2b^2.
\end{align}
If $\sin x_j = 0$, then $\alpha_j = \lambda/b + O(j^{-2})$ and the leading order coefficient of $\omega_j$ in the formula for $D_1$ is $b^2$. Hence, \eqref{lower bound} implies that
$$
ab (x_{j+1} - x_j) \geq 2 a \lambda > b\lambda = b^2 \omega_j
$$
and $D_1 > 0$ is nonvanishing. If $\cos x_j = 0$, then $\omega_j = \lambda/a + O(j^{-2})$ and the leading order coefficient of $\omega_j$ in the formula for $D_1$ is $a^2$. Hence, \eqref{lower bound} again implies that
$$
ab (x_{j+1} - x_j) \geq 2a\lambda > a \lambda = a^2 \omega_j
$$
and $D_1 > 0$ is nonvanishing. In the third case, solving \eqref{critical point} results in
$$
(a^2 - b^2) \sin^2 x_j + b^2 = 0,
$$
which impossible unless $b^2 = 0$.
\end{proof}

\noindent In this way, we obtain two orbits from $x$ to $y$ by letting $\alpha$ increase within the $\alpha \geq \alpha_x$ regime. Dynamically, these orbits can be characterized by having a point of tangency to a confocal ellipse before a moment of reflection at the boundary in the forwards direction.\\
\\
We now consider the regime in which $\alpha \leq \alpha_x$. There is a different one to one correspondence between $\alpha$ and $\lambda$ but many of the equations above remain completely valid. In particular \eqref{big part} and \eqref{product factors} are unchanged. Hence, by essentially the same bounds as in Lemma \ref{billiard iterates} before, if $\phi_j^{(1)}$ and $\phi_j^{(2)}$ denote the angular components of the intersection points in elliptical polar coordinates, we have
\begin{align*}
\frac{\d \phi_j^{(1)}}{\d \lambda} \gtrsim j, \qquad \frac{\d \phi_j^{(2)}}{\d \lambda} \gtrsim j
\end{align*}
in the $\alpha \leq \alpha_x$ regime. This provides two additional orbits, which are dynamically characterized by having a reflection at the boundary before becoming tangent to a confocal ellipse.\\
\\
To obtain the four \textit{clockwise} orbits, note that we can first apply the isometry $\R^2 \ni (z,w) \mapsto (-z, w)$, obtain four counterclockwise orbits as above, and then reflect back. In the last link of any orbit, the first point of intersection with $C_{\lambda_y}$ is reached before a point of tangency with a confocal ellipse while the second intersection point is reached after a point of tangency. These characterizations are important in Section \ref{Computing the singularity in elliptical polar coordinates} for understanding four different types of orbit configurations and determining which types of limiting orbits give rise to periodic orbits of precisely $j$ reflections as $(x,y)$ approach the diagonal of the boundary.

\begin{rema}
For sufficiently large $j$ and $x,y$ both lying \textit{on the boundary} near the diagonal, the existence of a single such geodesic for general smooth, strictly convex domains was proven in \cite{GuMeCohomological}, \cite{MaMe82} and \cite{Popov1994}. The eight orbits in the statement of Lemma \ref{phase function} can be seen to collapse into the orbits described in \cite{MaMe82} as $(x,y)$ approach the diagonal of the boundary from any direction. However, there may be a different number of reflections in the limiting orbit (see the proof of Lemma \ref{Sj lemma}).
\end{rema}


\section{Acknowledgements} The author would like to thank Hamid Hezari and Katya Krupchyk for their support and suggestions during this project. The author would also like to thank Vadim Kaloshin and Alfonso Sorrentino for allowing the use of their images in Figure \ref{caustic pictures}. Additionally, Katya Krupchyk was gracious enough to provide the author with funding from NSF grant DMS 1500703 during the summer of 2016.

\nocite{HZ2}
\nocite{GuMeCohomological}
\nocite{LeviTabachnikov}
\nocite{Arnold}
\nocite{HaHiFo18}
\nocite{Zw12}
\nocite{So14}
\nocite{Ho485}

\bibliographystyle{alpha}
\bibliography{EllipseReferences}

\end{document}